\begin{document}
\newcommand{\done}[2]{\dfrac{d {#1}}{d {#2}}}
\newcommand{\donet}[2]{\frac{d {#1}}{d {#2}}}
\newcommand{\pdone}[2]{\dfrac{\partial {#1}}{\partial {#2}}}
\newcommand{\pdonet}[2]{\frac{\partial {#1}}{\partial {#2}}}
\newcommand{\pdonetext}[2]{\partial {#1}/\partial {#2}}
\newcommand{\pdtwo}[2]{\dfrac{\partial^2 {#1}}{\partial {#2}^2}}
\newcommand{\pdtwot}[2]{\frac{\partial^2 {#1}}{\partial {#2}^2}}
\newcommand{\pdtwomix}[3]{\dfrac{\partial^2 {#1}}{\partial {#2}\partial {#3}}}
\newcommand{\pdtwomixt}[3]{\frac{\partial^2 {#1}}{\partial {#2}\partial {#3}}}
\newcommand{\bs}[1]{\mathbf{#1}}
\newcommand{\bx}{\mathbf{x}}
\newcommand{\by}{\mathbf{y}}
\newcommand{\bd}{\mathbf{d}} 
\newcommand{\bn}{\mathbf{n}} 
\newcommand{\bP}{\mathbf{P}} 
\newcommand{\bp}{\mathbf{p}} 
\newcommand{\bz}{\mathbf{z}}
\newcommand{\ba}{\mathbf{a}}
\newcommand{\ol}[1]{\overline{#1}}
\newcommand{\rf}[1]{(\ref{#1})}
\newcommand{\xt}{\mathbf{x},t}
\newcommand{\hs}[1]{\hspace{#1mm}}
\newcommand{\vs}[1]{\vspace{#1mm}}
\newcommand{\eps}{\varepsilon}
\newcommand{\ord}[1]{\mathcal{O}\left(#1\right)} 
\newcommand{\oord}[1]{o\left(#1\right)}
\newcommand{\Ord}[1]{\Theta\left(#1\right)}
\newcommand{\PhiF}{\Phi_{\rm freq}}
\newcommand{\real}[1]{{\rm Re}\left[#1\right]} 
\newcommand{\im}[1]{{\rm Im}\left[#1\right]}
\newcommand{\hsnorm}[1]{||#1||_{H^{s}(\bs{R})}}
\newcommand{\hnorm}[1]{||#1||_{\tilde{H}^{-1/2}((0,1))}}
\newcommand{\norm}[2]{\left\|#1\right\|_{#2}}
\newcommand{\normt}[2]{\|#1\|_{#2}}
\newcommand{\on}[1]{\Vert{#1} \Vert_{1}}
\newcommand{\tn}[1]{\Vert{#1} \Vert_{2}}
\newcommand{\ts}{\tilde{s}}
\newcommand{\tGamma}{{\tilde{\Gamma}}}
\newcommand{\darg}[1]{\left|{\rm arg}\left[ #1 \right]\right|}
\newcommand{\bnabla}{\boldsymbol{\nabla}}
\newcommand{\dive}{\boldsymbol{\nabla}\cdot}
\newcommand{\curl}{\boldsymbol{\nabla}\times}
\newcommand{\Phixy}{\Phi(\bx,\by)}
\newcommand{\PhiOxy}{\Phi_0(\bx,\by)}
\newcommand{\dxPhixy}{\pdone{\Phi}{n(\bx)}(\bx,\by)}
\newcommand{\dyPhixy}{\pdone{\Phi}{n(\by)}(\bx,\by)}
\newcommand{\dxPhiOxy}{\pdone{\Phi_0}{n(\bx)}(\bx,\by)}
\newcommand{\dyPhiOxy}{\pdone{\Phi_0}{n(\by)}(\bx,\by)}

\newcommand{\rd}{\mathrm{d}}
\newcommand{\R}{\mathbb{R}}
\newcommand{\N}{\mathbb{N}}
\newcommand{\Z}{\mathbb{Z}}
\newcommand{\C}{\mathbb{C}}
\newcommand{\K}{{\mathbb{K}}}
\newcommand{\ri}{{\mathrm{i}}}
\newcommand{\re}{{\mathrm{e}}} 
\newcommand{\scA}{\mathscr{A}} 

\newcommand{\cA}{\mathcal{A}}
\newcommand{\cC}{\mathcal{C}}
\newcommand{\cS}{\mathcal{S}}
\newcommand{\cD}{\mathcal{D}}
\newcommand{\cone}{{c_{j}^\pm}}
\newcommand{\ctwo}{{c_{2,j}^\pm}}
\newcommand{\cthree}{{c_{3,j}^\pm}}

\newtheorem{thm}{Theorem}[section]
\newtheorem{lem}[thm]{Lemma}
\newtheorem{defn}[thm]{Definition}
\newtheorem{prop}[thm]{Proposition}
\newtheorem{cor}[thm]{Corollary}
\newtheorem{rem}[thm]{Remark}
\newtheorem{conj}[thm]{Conjecture}
\newtheorem{ass}[thm]{Assumption}
\newcommand{\tk}{\tilde{k}}
\newcommand{\tbx}{\tilde{\bx}}
\newcommand{\txi}{\tilde{\xi}}
\newcommand{\bxi}{\boldsymbol{\xi}}
\newcommand{\sD}{\mathsf{D}}
\newcommand{\sN}{\mathsf{N}}
\newcommand{\cT}{\mathcal{T}}
\newcommand{\dudnjump}{\left[ \pdone{u}{\bn}\right]}
\newcommand{\dudnjumptext}{[ \pdonetext{u}{\bn}]}
\newcommand{\cV}{\mathcal{V}}
\title{Acoustic scattering: high frequency boundary element methods and unified transform methods}

\author{S.\ N.\ Chandler-Wilde\footnotemark[1], S.\ Langdon\footnotemark[1]}

\maketitle

\renewcommand{\thefootnote}{\fnsymbol{footnote}}
\footnotetext[1]{Department of Mathematics and Statistics, University of Reading, Whiteknights PO Box 220, Reading RG6 6AX, UK, {\tt email: s.n.chandler-wilde@reading.ac.uk, s.langdon@reading.ac.uk}.  SL supported by EPSRC grants EP/K000012/1 and EP/K037862/1}
\renewcommand{\thefootnote}{\arabic{footnote}}

\begin{abstract}
  We describe some recent advances in the numerical solution of acoustic scattering problems. A major focus of the paper is the efficient solution of high frequency scattering problems via hybrid numerical-asymptotic boundary element methods. We also make connections to the unified transform method due to A.~S.~Fokas and co-authors, analysing particular instances of this method,  proposed by J.~A.~DeSanto and co-authors, for problems of acoustic scattering by diffraction gratings.
\end{abstract}

\section{Introduction}
\label{sec:intro}

The reliable simulation of processes in which acoustic waves are scattered by obstacles is of great practical interest, with applications including the modelling of sonar and other methods of acoustic detection, and the study of problems of outdoor noise propagation and noise control, for example associated with road, rail or aircraft noise.  Unless the geometry of the scattering obstacle is particularly simple, analytical solution of scattering problems is usually impossible, and hence in general numerical schemes are required.

Most problems of acoustic scattering can be formulated, in the frequency domain, as linear elliptic boundary value problems (BVPs).  In this chapter we describe how the boundary element method (BEM) can be used to solve such problems, and we summarise in particular recent progress in tackling high frequency scattering problems by combining classical BEMs with insights from ray tracing methods and high frequency asymptotics. We also make connections with a newer method for numerical solution of elliptic boundary value problems, the unified tranform method due to Fokas and co-authors, and detail the independent development of this method for acoustic scattering problems by DeSanto and co-authors.

In a homogeneous medium at rest, the acoustic pressure satisfies the linear wave equation.  Under the further assumption of harmonic ($\re^{-\ri\omega t}$) time dependence, the problem of computing perturbations in acoustic pressure reduces to the solution of the Helmholtz equation
\begin{equation}
  \Delta u + k^2 u =0, \quad \mbox{in } D\subset \R^d,
  \label{eqn:HE}
\end{equation}
where $d=2$ or $3$ is the dimension of the problem under consideration, and $D$ denotes the domain of propagation, the region in which the wave propagates. 
The positive constant $k:=\omega/c>0$, where $\omega$ is angular frequency and $c$ is the speed of sound in $D$, is known as the wavenumber.

We will describe below how to compute, by the boundary element method, solutions to~(\ref{eqn:HE}) that also satisfy certain boundary conditions on the boundary of the domain of propagation.  The most commonly relevant boundary condition is the impedance boundary condition 
\begin{equation}
  \frac{\partial u}{\partial \nu} - \ri k \beta u = h, \quad \mbox{on }\Gamma,
  \label{eqn:imp_bc}
\end{equation}
where $\Gamma$ denotes the boundary of $D$ and $\partial/\partial\nu$ denotes the normal derivative on the boundary, where $\nu(\bx)$ denotes the unit normal at $\bx\in\Gamma$ directed out of $D$.  The function $h$ is identically zero in acoustic scattering problems (problems where we are given an incident wave and a stationary scatterer and have to compute the resulting acoustic field), but is non-zero for radiation problems (where the motion of a radiating structure is given, and we have to compute the radiated acoustic field).  Finally, $\beta$ is the relative surface admittance which, in general, is a function of position on the boundary and of frequency $\omega$.  When $\beta=0$ the boundary is described as being acoustically rigid, or sound-hard (this is the Neumann boundary condition).  More generally, $\beta\in\C$, with $\real{\beta}\geq 0$ to satisfy the physical condition that the surface absorbs (rather than emits) energy.  When $|\beta|$ is very large, one can consider as an approximation to~(\ref{eqn:imp_bc}) the sound-soft (Dirichlet) boundary condition
\begin{equation}
  u=h, \quad \mbox{on }\Gamma.
  \label{eqn:ss}
\end{equation}

Equation \eqref{eqn:HE} models acoustic propagation in a homogeneous medium at rest. We are often interested in applications in propagation through a medium with variable wave speed. The BEM is well-adapted to compute solutions in the case when the wave speed $c$ is piecewise constant. In particular, when a homogeneous region  with a different wave speed is  embedded in a larger homogeneous medium, acoustic waves are transmitted across the boundary $\Gamma$ between the two media, \eqref{eqn:HE} holds on either side of $\Gamma$ with different values of $k=\omega/c$, and, at least in the simplest case when the density of the two media is the same, the boundary conditions on $\Gamma$ (so-called ``transmission conditions'') are that $u$ and $\partial u/\partial \nu$ are continuous across $\Gamma$.


The domain $D$ can be a bounded domain (e.g.\ for applications in room acoustics), but in many practical applications it may be unbounded (e.g.\ for outdoor noise propagation).  In this case, the complete mathematical formulation must also include a condition to represent the idea that the acoustic field (or at least some part of it, e.g.\ the part reflected by a scattering obstacle) is travelling outwards.  The usual condition imposed is the Sommerfeld radiation condition,
\begin{equation}
  \frac{\partial u}{\partial r}(\bx) - \ri k u(\bx) = o(r^{-(d-1)/2}),
  \label{eqn:SRC}
\end{equation}
as $r:=|\bx|\to\infty$, uniformly in $\hat{\bx}:=\bx/r$.

Numerical solution of (\ref{eqn:HE}) together with a boundary condition (and a radiation condition if the problem is posed on an unbounded domain) can be achieved by many means.  The key idea of the BEM is to reformulate the BVP as a boundary integral equation (i.e.\ an integral equation that holds on the boundary $\Gamma$), and then to solve that numerically.  Under the assumptions above (homogeneous medium, time-harmonic waves) the advantages of BEM over domain based methods such as finite difference or finite element methods are twofold: firstly, as the integral equation to be solved holds only on the boundary of the domain, the dimensionality of the problem is reduced; this has the obvious advantage that it is easier to construct an approximation space in a lower dimension, but moreover we will see below (in \S\ref{sec:HNA}) that it can be easier to understand the oscillatory behaviour of the solution on the boundary than throughout the domain, which can assist greatly in designing a good approximation space at high frequencies.  Secondly, for problems in unbounded domains, reformulating as an integral equation on the boundary removes the need to truncate the unbounded domain in order to be able to construct a discretisation space.

We describe the reformulation of our BVPs as boundary integral equations (BIEs) in
\S\ref{sec:BIE}.  Following~\cite{ChGrLaSp:12} we consider the case that $D$ is Lipschitz, and thus work throughout in a Sobolev space setting (see, e.g., \cite{McLean}; for the simpler case of smooth boundaries we refer to~\cite{CoKr:83}).  We then consider the numerical solution of these BIEs in~\S\ref{sec:BEM}, focusing in particular, in~\S\ref{sec:HNA}, on schemes that are well-adapted to the case when the wavenumber~$k$ is large. As we report, for many scattering problems these methods provably compute solutions of any desired accuracy with a cost, in terms of numbers of degrees of freedom and size of matrix to be inverted,  that is close to frequency independent.

There is a wide literature on boundary integral equation formulations for acoustic scattering problems and boundary element methods for linear elliptic BVPs:  see, e.g., \cite{At:97,CoKr:83,HsWe:04,Kress,McLean,Ne:01,sauter-schwab11,Sl:92,Sl:95}, and see also, e.g., \cite{Ih:98} for a comparison with finite element methods.  The question of how to develop schemes efficient for large~$k$ has been considered fully in the review article~\cite{ChGrLaSp:12}, where a very complete literature review of the topic can be found.  Our presentation in this chapter summarises some of the key ideas found therein, but primarily focuses, in \S\ref{sec:HNA}, on more recent developments.

This chapter is one of a collection of articles in significant part focussed on  the so-called ``unified transform'' or
``Fokas transform'', introduced by Fokas in 1997~\cite{Fo97}. This method is on the one hand an analytical transform technique which can be employed to solve linear BVPs with constant coefficents in canonical domains. But also this method, when applied in general domains, has many aspects in common with BIE and boundary element methods, in particular it reduces solution of the BVP to solution of an equation on the boundary $\Gamma$ for the unknown part of the Cauchy data ($u$ or its normal derivative on $\Gamma$), just as in the BIE method.

We discuss this methodology in \S\ref{sec:uni}. In particular, we describe how this method can be used to solve acoustic problems in interior domains (see also Spence \cite{Spence14} in this volume), proposing a new version of this method which computes the best approximation to the unknown boundary data from a space of restrictions of (generalised) plane waves to the boundary. We also discuss the application of this method to acoustic scattering problems, noting that the methodology applies in particular to so-called {\em rough surface scattering problems}, indeed has been developed independently as a numerical method for these problems by DeSanto and co-authors, in articles from 1981 onwards \cite{DeSanto:81,DeSaErHeMi:98,DeSaErHeMi:01,DeSaErHeKrMiSw:01,ArChDeSa:06}. The numerical implementations  we focus on have in common with the high frequency BEMs of \S\ref{sec:HNA} that they utilise oscillatory basis functions (restrictions of plane waves to $\Gamma$). The results reported in \S\ref{sec:undg} from \cite{ArChDeSa:06} suggest that, in terms of numbers of degrees of freedom required to achieve accurate approximations,  these methods outperform standard BEMs for certain scattering problems.

\tableofcontents

\section{Boundary integral equation formulations}
\label{sec:BIE}

In this section we state the BVPs introduced in \S\ref{sec:intro} more precisely, and we reformulate them as BIEs.  Given that many scatterers in applications have corners and edges, we will consider throughout domains $D\subset \R^d$ (with $d=2$ or 3) that are Lipschitz, usually denoting the boundary of our domain $D$ by $\Gamma$.  We denote the trace operator by $\gamma$, so that $\gamma u$ is just the restriction of $u$ to $\Gamma$ when $u$ is sufficiently regular, and the normal derivative operator by $\partial_\nu$, noting that $\partial_\nu u$ coincides with the classical definition of the normal derivative $\partial u/\partial \nu$ when $u$ is sufficiently regular (see \cite[Appendix~A]{ChGrLaSp:12} for more details).  
For further mathematical details on the results in this section, we refer particularly to~\cite[\S2]{ChGrLaSp:12}. Our Sobolev space notations are defined precisely in \cite{ChGrLaSp:12,McLean}. These notations are standard and we will not repeat them here except to note that, for a general open set $D$, following \cite{McLean}, $H^s(D)$, for $s\in\R$, will denote the space of restrictions to $D$ of elements of $H^s(\R^d)$, while, for $n\in\N$, $W^n(D)$ will denote those $u\in L^2(D)$ whose partial derivatives of order $\leq n$ are also in $L^2(D)$: in particular $W^1(D) = \{u\in L^2(D): \nabla u\in L^2(D)\}$. These notions coincide, that is $H^n(D)=W^n(D)$ (with equivalence of norms), when $D$ is Lipschitz~\cite{McLean}, but we need to tread carefully where $D$ is not Lipschitz, for example when studying screen problems. 

\subsection{Acoustic BVPs}

This paper is about scattering problems, which are predominantly BVPs in exterior domains. However, the BIE methods we use intimately make connections between problems in exterior and interior domains, so that we will also make mention of interior problems. (And we flag that, to introduce the unified transform in \S\ref{sec:uni}, we study its application first to interior problems in \S\ref{sec:unidp}.) As alluded to above, we will formulate all of our BVPs in Sobolev space settings.  We will consider as  BVPS: interior and exterior Dirichlet and impedance problems for the Helmholtz equation~(\ref{eqn:HE}) in the interior and exterior of Lipschitz domains; a Dirichlet problem in the exterior of a planar screen; and a particular Helmholtz transmission problem.

We state first, for $D$ a bounded Lipschitz domain, the {\bf interior Dirichlet problem}:
\begin{equation} \label{prob:idp}
\begin{array}{l}
  \mbox{Given }h\in H^{1/2}(\Gamma), \mbox{ find }u\in C^2(D)\cap H^1(D)\\
  \mbox{such that (\ref{eqn:HE}) holds in } D \mbox{ and }\gamma u =  h \mbox{ on }\Gamma.
\end{array}
\end{equation}

Next, for $D$ a bounded Lipschitz domain, the {\bf interior impedance problem}:
\begin{equation} \label{prob:iip}
\begin{array}{l}
  \mbox{Given }h\in H^{-1/2}(\Gamma), \mbox{ and } \beta\in L^{\infty}(\Gamma) \mbox{ (with }\real{\beta}\geq 0),\\
  \mbox{ find }u\in C^2(D)\cap H^1(D) \mbox{ such that (\ref{eqn:HE}) holds in } D \\
  \mbox{ and }\partial_\nu u - \ri k \beta \gamma u =  h \mbox{ on }\Gamma.
\end{array}
\end{equation}

Solvability results for~(\ref{prob:idp}) and~(\ref{prob:iip}) are well known (see, e.g., \cite{CoKr:83}, \cite[p.286]{McLean}, \cite[Theorems~2.1, 2.3]{ChGrLaSp:12}).  For the interior Dirichlet problem~(\ref{prob:idp}) there exists a sequence $0<k_1<k_2<\cdots$ of positive wavenumbers, with $k_m\to\infty$ as $m\to\infty$, such that~(\ref{prob:idp}) with $h=0$ has a non-trivial solution (so $-k_m^2$ is a Dirichlet eigenvalue of the Laplacian in $D$).  For all other values of $k>0$, (\ref{prob:idp}) has exactly one solution.  The same result holds for the Neumann problem~((\ref{prob:iip}) with $\beta=0$), with a different sequence of wavenumbers.  The interior impedance problem~(\ref{prob:iip}) with $\real{\beta}$ not identically zero has exactly one solution.

Next we state the exterior BVPs.  Suppose that $\Omega_+\subset \R^d$ is an unbounded Lipschitz domain with boundary~$\Gamma$, such that $\Omega_-:= \R^d\setminus \overline{\Omega_+}$, $d=2$ or $3$, is a bounded Lipschitz open set.  The {\bf exterior Dirichlet problem} is:
\begin{equation} \label{prob:edp}
\begin{array}{l}
  \mbox{Given }h\in H^{1/2}(\Gamma), \mbox{ find }u\in C^2(\Omega_+)\cap H_{\mathrm{loc}}^1(\Omega_+)\\
  \mbox{such that (\ref{eqn:HE}) holds in } \Omega_+, \gamma u =  h \mbox{ on } \Gamma,\\
  \mbox{and }u \mbox{ satisfies the radiation condition (\ref{eqn:SRC}).}
\end{array}
\end{equation}
The {\bf exterior impedance problem} is (where the normal $\nu$ here points out of $\Omega_-$ {\em into} the domain of propagation $\Omega_+$):
\begin{equation} \label{prob:eip}
\begin{array}{l}
  \mbox{Given }h\in H^{-1/2}(\Gamma), \mbox{ and }\beta\in L^\infty(\Gamma) \mbox{ (with }\real{\beta}\geq 0), \\
  \mbox{find }u\in C^2(\Omega_+)\cap H_{\mathrm{loc}}^1(\Omega_+) \mbox{ such that (\ref{eqn:HE}) holds in } \Omega_+,\\
  \partial_\nu u +\ri k\beta u =  h \mbox{ on }\Gamma,\\
  \mbox{and }u\mbox{ satisfies the radiation condition (\ref{eqn:SRC}).}
\end{array}
\end{equation}
Both problems~(\ref{prob:edp}) and~(\ref{prob:eip}) have exactly one solution (see, e.g., \cite{CoKr:83}, \cite[Theorem~2.10]{ChGrLaSp:12}).

In the above exterior Dirichlet problem the boundary $\Gamma$ is a closed surface, separating two Lipschitz open sets, $\Omega_+$ and $\Omega_-$. A variant, which models acoustic scattering by infinitely thin screens, is to consider the case where $\Gamma$ is an open surface, and the domain in which the BVP is to be solved lies on both sides of $\Gamma$. We will show results in \S\ref{sec:HNA} for the special case of a planar screen, when, for some bounded $C^0$ open subset $S\subset \R^{d-1}$,
\begin{equation} \label{eqn:screen}
\Gamma := \{(\tilde x,0): \tilde x\in S\}.
\end{equation}
With $\Gamma$ given by \eqref{eqn:screen} and $D:= \R^d\setminus \overline \Gamma$, the {\bf Dirichlet screen problem} is:
\begin{equation} \label{prob:screen}
\begin{array}{l}
  \mbox{Given }h\in H^{1/2}(\Gamma), \mbox{ find }u\in C^2(D)\cap W_{\mathrm{loc}}^1(D)\\
  \mbox{such that (\ref{eqn:HE}) holds in } D, \gamma_\pm u =  h \mbox{ on } \Gamma,\\
  \mbox{and }u \mbox{ satisfies the radiation condition (\ref{eqn:SRC}).}
\end{array}
\end{equation}
In the above formulation $\gamma_\pm$ are the trace operators onto the plane $x_d=0$ containing $\Gamma$ from the upper and lower half-planes, respectively. That this problem is well-posed dates back at least to Stephan \cite{stephan87} in the case that $S$ is a $C^\infty$ open set; this result is extended to the case that $S$ is merely $C^0$ in~\cite{ScreenCoerc}.

Finally we formulate a transmission problem for the Helmholtz equation. Let $\Omega_-$ be a bounded Lipschitz open set, $\Omega_+ := \R^d\setminus \overline{\Omega_-}$, $\Gamma$ be the common boundary of $\Omega_+$ and $\Omega_-$,  and suppose that $k$ in \eqref{eqn:HE} is a function of position rather than constant: precisely that, for some constants $k_+ >0$ and $k_-\in \C$ with $\real{k_-}>0$ and $\im{k_-}\geq 0$, $k(x) = k_+$ in $\Omega_+$ and $k(x) = k_-$ in $\Omega_-$. The {\bf Helmholtz transmission problem} we consider is:
\begin{equation} \label{prob:trans}
\begin{array}{l}
  \mbox{Given }h\in H^{1/2}(\Gamma), \; g\in H^{-1/2}(\Gamma), \mbox{ find }u\in C^2(\Omega_+\cup \Omega_-)\cap\\
   W_{\mathrm{loc}}^1(\Omega_+\cup \Omega_-)
  \mbox{ such that (\ref{eqn:HE}) holds in } \Omega_+\cup \Omega_-, \gamma_+u-\gamma_- u =  h \mbox{ and }\\ \partial_\nu^+ u-\partial_\nu^- u = g \mbox{ on } \Gamma,
  \mbox{ and }u \mbox{ satisfies the radiation condition (\ref{eqn:SRC}).}
\end{array}
\end{equation}
Here $\gamma_+ u$ and $\gamma_- u$ are the traces of $u$, and $\partial_\nu^\pm u$ the normal derivatives, taken from the $\Omega_+$ and $\Omega_-$ sides, respectively. That this BVP has exactly  one solution is shown for the case in which $\Omega_-$ is a $C^2$ domain in \cite{CoKr:83}, and the argument there easily extends to the Lipschitz case. We recall from the introduction that the different values for $k$ in $\Omega_\pm$ correspond physically to different wave speeds; allowing $k_-$ to have a positive imaginary part models an interior medium which dissipates energy as the wave propagates. We remark that \cite{CoKr:83} (and many other authors) consider a more general case with $\gamma_+u-\gamma_- u =  h$ replaced by $\mu_+\gamma_+u-\mu_-\gamma_- u =  h$, for some constants $\mu_\pm\in \C$, which models a jump in density in addition to wave speed across~$\Gamma$.

\subsection{Acoustic scattering problems}
In acoustic scattering problems one is interested in computing the scattered acoustic field $u^S$ produced when an incident field $u^I\in L^1_{\mathrm{loc}}(\R^d)$ interacts with some obstacle or obstacles (the {\em scatterer}) occupying the closed set $\Omega$ (with surface $\Gamma$), such that $\Omega_+:= \R^d\setminus \Omega$ is an unbounded domain. One assumes that the incident field itself is a solution of the Helmholtz equation, precisely that it satisfies~(\ref{eqn:HE}) in some neighbourhood $G$ of $\Omega$, in which case also $u^I|_G\in C^\infty(G)$.   We will refer throughout to the sum $u:= u^I+u^S$ as the {\em total acoustic field} (total field for short).

In many applications, the incident field is generated by a point source, i.e., for some $\bz\in \R^d\setminus \Omega$,
$$
u^I(\bx) = \Phi_k(\bx,\bz), \quad \bx\in \R^d\setminus \{\bz\},
$$
where $\Phi_k$ is the fundamental solution of the Helmholtz equation, given, in
the two-dimensional (2D) and three-dimensional (3D) cases, by
\begin{equation} \label{eq:Phi}
\Phi_k(\bx,\by) := \left\{\begin{array}{cc}
                       \displaystyle{\frac{\ri}{4}}H_0^{(1)}(k|\bx-\by|), & d=2,
                      \\ \\
                       \displaystyle{\frac{\exp(\ri k|\bx-\by|)}{4\pi|\bx-\by|}}, & d=3,
                    \end{array}\right.
\end{equation}
for $\bx,\by\in\R^d$, $\bx\neq \by$, where $H_\nu^{(1)}$ denotes the Hankel
function of the first kind of order $\nu$.  To represent a source far from the scatterer, the incident field can be taken to be a plane wave, i.e., for some $\hat \ba\in \R^d$ with $|\hat \ba|=1$,
\begin{equation}
  u^I(\bx) = \exp(\ri k \bx\cdot \hat \ba), \quad \bx\in \R^d.
  \label{eqn:plane}
\end{equation}

The scattering problems we consider are then particular cases of some of the BVPs above, namely the exterior Dirichlet and impedance problems, the Dirchlet screen problem, and the transmission problem.  The {\bf sound-soft scattering problem} is:
\begin{equation} \label{prob:ssp}
\begin{array}{l}
  \mbox{Find }u^S\in C^2(\Omega_+)\cap H_{\mathrm{loc}}^1(\Omega_+) \mbox{ such that (\ref{eqn:HE}) holds in } \Omega_+, \\
  \gamma u =  0 \mbox{ on } \Gamma\, (\mbox{so }\gamma u^S = -u^I|_{\Gamma}), \\
  \mbox{and }u^S \mbox{ satisfies the radiation condition (\ref{eqn:SRC}).}
\end{array}
\end{equation}
The {\bf impedance scattering problem} is:
\begin{equation} \label{prob:isp}
\begin{array}{l}
  \mbox{Find }u^S\in C^2(\Omega_+)\cap H_{\mathrm{loc}}^1(\Omega_+) \mbox{ such that (\ref{eqn:HE}) holds in } \Omega_+,\\
  \partial_\nu u +\ri k\beta u =  0 \mbox{ on }\Gamma \\
  (\mbox{so }(\partial_{\nu} + \ri k \beta \gamma) u^S|_{\Gamma} = -(\partial_{\nu} + \ri k \beta \gamma) u^I|_{\Gamma}),\\
  \mbox{and }u^S\mbox{ satisfies the radiation condition (\ref{eqn:SRC}).}
\end{array}
\end{equation}
The {\bf screen scattering problem} we consider is (where $\Gamma$ is given by \eqref{eqn:screen} and $D=\R^d\setminus \overline \Gamma$):
\begin{equation} \label{prob:scsp}
\begin{array}{l}
  \mbox{Find }u^S\in C^2(D)\cap W_{\mathrm{loc}}^1(D) \mbox{ such that (\ref{eqn:HE}) holds in } D, \\
  \gamma_\pm u =  0 \mbox{ on } \Gamma\, (\mbox{so }\gamma_\pm u^S = -u^I|_{\Gamma}), \\
  \mbox{and }u^S \mbox{ satisfies the radiation condition (\ref{eqn:SRC}).}
\end{array}
\end{equation}
Finally the {\bf transmission scattering problem} we consider is (where $k(x) = k_+$ in $\Omega_+$ and $k(x) = k_-$ in $\Omega_-$):
\begin{equation} \label{prob:tsp}
\begin{array}{l}
  \mbox{Find }u\in C^2(\Omega_+\cup \Omega_-)\cap W_{\mathrm{loc}}^1(\Omega_+\cup \Omega_-)
  \mbox{ such that (\ref{eqn:HE})}\\ \mbox{holds in } \Omega_+\cup \Omega_-, \gamma_+u=\gamma_- u  \mbox{ and } \partial_\nu^+ u=\partial_\nu^- u \mbox{ on } \Gamma,\\
  \mbox{and }u^S \mbox{ satisfies the radiation condition (\ref{eqn:SRC}).}
\end{array}
\end{equation}

\subsection{BIE formulations}
As above, suppose that $\Omega_-$ is a bounded Lipschitz open set with boundary~$\Gamma$ such that $\Omega_+:= \R^d\setminus \overline{\Omega_-}$ is a  Lipschitz domain, and let $\nu(\bx)$ denote the unit normal at $\bx\in\Gamma$ directed into $\Omega_+$.
To formulate  BIEs for~(\ref{eqn:HE}), we introduce the single-layer potential operator ${\cal S}_k: H^{-1/2}(\Gamma)\to H_{\mathrm{loc}}^1(\R^d)$ and the double-layer potential operator ${\cal D}_k: H^{1/2}(\Gamma)\to H_{\mathrm{loc}}^1(\Omega_{\pm})$, defined by
\[ 
  {\cal S}_k \phi(\bx) := \int_\Gamma \Phi_k(\bx,\by) \phi(\by) \, \rd s(\by), \quad \bx\in \R^d\setminus \Gamma,
\]
and
\[ 
  {\cal D}_k \phi(\bx) := \int_\Gamma \frac{\partial\Phi_k(\bx,\by)}{\partial \nu(\by)} \, \phi(\by)\, \rd s(\by), \quad \bx\in \R^d\setminus \Gamma,
\]
respectively, where the normal $\nu$ is directed into $\Omega_+$.  These layer potentials provide
solutions to~(\ref{eqn:HE}) in $\R^d\setminus\Gamma$; moreover, they also automatically satisfy the radiation condition~(\ref{eqn:SRC}).  In general all the standard BVPs for the Helmholtz
equation~(\ref{eqn:HE}) can be formulated as integral equations on $\Gamma$ using  these layer potentials.

Specifically, we can use Green's representation theorems, which lead to so-called {\em direct} BIE formulations (as we shall see in \S\ref{sec:HNA} these lend themselves particularly well to efficient approximation strategies when $k$ is large).
Denoting the exterior and interior trace operators, from $\Omega_+$ and $\Omega_-$, respectively,  by $\gamma_+$ and $\gamma_-$, and the exterior and interior normal derivative operators by $\partial_\nu^+$ and $\partial_\nu^-$, respectively,
  we have the following result for interior problems (see \cite[Theorem~2.20]{ChGrLaSp:12}).
\begin{thm} \label{thm:grt_int}
If $u\in H^1(\Omega_-)\cap C^2(\Omega_-)$ and, for some $k\geq 0$, $\Delta u + k^2u = 0$ in $\Omega_-$, then
\begin{equation} \label{eq:grt_int}
{\cal S}_k \partial_\nu^- u(\bx) - {\cal D}_k \gamma_- u(\bx) = \left\{\begin{array}{cc}
                                                               u(\bx), & \bx\in \Omega_-, \\
                                                               0, & \bx\in \Omega_+.
                                                             \end{array}
\right.
\end{equation}
\end{thm}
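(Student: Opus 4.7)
The plan is to apply Green's second identity (in its Sobolev formulation for Lipschitz domains) with $u$ and the test function $v(\by) := \Phi_k(\bx,\by)$, treating the two cases $\bx\in\Omega_+$ and $\bx\in\Omega_-$ separately. The regularity $u\in H^1(\Omega_-)\cap C^2(\Omega_-)$ combined with $\Delta u = -k^2 u \in L^2(\Omega_-)$ guarantees that $\gamma_- u \in H^{1/2}(\Gamma)$ and $\partial_\nu^- u \in H^{-1/2}(\Gamma)$ are well-defined, the latter via the usual duality construction.

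For $\bx\in\Omega_+$: since $\bx\notin\overline{\Omega_-}$, the function $v(\by)=\Phi_k(\bx,\by)$ is $C^\infty$ on $\overline{\Omega_-}$ and satisfies $(\Delta+k^2)v=0$ there. Green's second identity applied to $u$ and $v$ on $\Omega_-$ makes the bulk integral vanish and leaves only
$$\int_\Gamma \left(v\,\partial_\nu^- u - \gamma_- u\,\partial_\nu^- v\right) \rd s = 0,$$
interpreted as a duality pairing if necessary. Recognising the two integrals as $\cS_k \partial_\nu^- u(\bx)$ and $\cD_k \gamma_- u(\bx)$ respectively yields the identity in this case.

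For $\bx\in\Omega_-$: choose $\eps>0$ small enough that $\overline{B_\eps(\bx)}\subset\Omega_-$ and apply Green's second identity on the punctured domain $\Omega_{-,\eps}:=\Omega_-\setminus\overline{B_\eps(\bx)}$, where both $u$ and $v$ are smooth solutions of the Helmholtz equation. The bulk integral vanishes, leaving boundary contributions over $\Gamma$ (which assemble into $\cS_k\partial_\nu^- u(\bx)-\cD_k\gamma_- u(\bx)$ as before) and over $\partial B_\eps(\bx)$. Letting $\eps\to 0$, the term $\int_{\partial B_\eps(\bx)} \Phi_k(\bx,\by)\,\partial_\nu u(\by)\,\rd s(\by)$ is $\ord{\eps\,|\log\eps|}$ in 2D and $\ord{\eps}$ in 3D using the explicit form \eqref{eq:Phi}, hence vanishes, while the mean-value-type computation on the corresponding double-layer integral $\int_{\partial B_\eps(\bx)} \partial_{\nu(\by)}\Phi_k(\bx,\by)\,u(\by)\,\rd s(\by)$ converges to $u(\bx)$ by continuity of $u$ at $\bx$ and the standard asymptotic $\partial_{\nu(\by)}\Phi_k(\bx,\by)\sim c_d |\bx-\by|^{1-d}$ near the singularity. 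This delivers the desired formula.

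The main obstacle is the Lipschitz regularity of $\Gamma$: the classical Green's identity is not directly available, and the boundary integrals over $\Gamma$ must be reinterpreted as $H^{-1/2}(\Gamma)\times H^{1/2}(\Gamma)$ duality pairings. The cleanest rigorous route, adopted in \cite[\S2]{ChGrLaSp:12}, is to approximate $\Omega_-$ from within by a sequence of smooth subdomains with uniformly controlled trace operators, apply the classical Green's identity on each, and pass to the limit using continuity of the trace operators and of the layer potential operators on the relevant Sobolev spaces. Once Green's identity is established in this form, the excision argument on $\partial B_\eps(\bx)$ is insensitive to the regularity of $\Gamma$ and proceeds as in the smooth case.
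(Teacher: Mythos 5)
Your proposal is correct and follows the standard route: the paper itself offers no proof of this theorem, simply citing \cite[Theorem~2.20]{ChGrLaSp:12}, and the argument given there is exactly the one you outline --- Green's second identity with $v(\by)=\Phi_k(\bx,\by)$, the excision of a small ball about $\bx$ when $\bx\in\Omega_-$, and interpretation of the boundary terms as $H^{-1/2}(\Gamma)\times H^{1/2}(\Gamma)$ duality pairings, justified for Lipschitz $\Gamma$ by approximation from within by smooth subdomains. Your treatment of the $\partial B_\eps(\bx)$ contributions and of the weak normal derivative is accurate, so nothing is missing.
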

\noindent The following is the corresponding result for exterior problems (see \cite[Theorem~2.21]{ChGrLaSp:12}).
\begin{thm} \label{thm:grt_ext}
If $u\in H^1_{\mathrm{loc}}(\Omega_+) \cap C^2(\Omega_+)$ and, for some $k>0$, $\Delta u + k^2 u = 0$ in $\Omega_+$ and $u$ satisfies the Sommerfeld radiation condition \eqref{eqn:SRC} in $\Omega_+$, then
\begin{equation} \label{eq:grt_ext}
-{\cal S}_k \partial_\nu^+ u(\bx) + {\cal D}_k \gamma_+ u(\bx) = \left\{\begin{array}{cc}
                                                               u(\bx), & \bx\in \Omega_+, \\
                                                               0, & \bx\in \Omega_-.
                                                             \end{array}
\right.
\end{equation}
\end{thm}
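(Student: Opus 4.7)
The plan is to reduce to the interior Green's representation theorem (Theorem~\ref{thm:grt_int}) by truncating $\Omega_+$ with a large ball, and then to let the radius tend to infinity, discarding the contributions from the artificial outer boundary via the Sommerfeld radiation condition \rf{eqn:SRC}.

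Fix $\bx\in\R^d\setminus\Gamma$ and choose $R_0>0$ so that $\{\bx\}\cup\overline{\Omega_-}\subset B_{R_0}$, where $B_R:=\{\by\in\R^d:|\by|<R\}$. For $R>R_0$ let $D_R:=\Omega_+\cap B_R$; this is a bounded Lipschitz open set with boundary $\Gamma\cup\partial B_R$, and by hypothesis $u|_{D_R}\in H^1(D_R)\cap C^2(D_R)$ satisfies the Helmholtz equation there. The outward unit normal from $D_R$ equals $-\nu$ on $\Gamma$ (since $\nu$ points into $\Omega_+$ and hence into $D_R$) and equals $\hat\by:=\by/|\by|$ on $\partial B_R$. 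Applying Theorem~\ref{thm:grt_int} with $\Omega_-$ replaced by $D_R$, and tracking the two sign reversals on $\Gamma$ (one in the density $\partial_{n^R}u$ and one in the kernel $\partial\Phi_k/\partial n^R(\by)$), yields
\begin{equation*}
  \chi_{\Omega_+}(\bx)\,u(\bx) \;=\; -{\cal S}_k\partial_\nu^+u(\bx) + {\cal D}_k\gamma_+u(\bx) + I_R(\bx),
\end{equation*}
where $\chi_{\Omega_+}$ is the indicator of $\Omega_+$ and
\begin{equation*}
  I_R(\bx) := \int_{\partial B_R}\!\left[\Phi_k(\bx,\by)\,\pdonet{u}{r}(\by) - \pdonet{\Phi_k(\bx,\by)}{r(\by)}\,u(\by)\right]\rd s(\by).
\end{equation*}
The theorem will follow once $I_R(\bx)\to 0$ as $R\to\infty$.

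To establish this I split $I_R = J_1(R) - J_2(R)$ with
\begin{equation*}
  J_1(R) := \int_{\partial B_R}\Phi_k(\bx,\by)\!\left[\pdonet{u}{r}-\ri k u\right]\!\rd s,\qquad J_2(R):=\int_{\partial B_R}\!\left[\pdonet{\Phi_k(\bx,\by)}{r(\by)}-\ri k\Phi_k(\bx,\by)\right]\!u(\by)\,\rd s,
\end{equation*}
so that the radiation condition applies to each of the paired factors. From the large-argument asymptotics of \eqref{eq:Phi}, $|\Phi_k(\bx,\by)|=O(R^{-(d-1)/2})$ pointwise on $\partial B_R$, while \rf{eqn:SRC} gives $\partial_r u-\ri k u=o(R^{-(d-1)/2})$ uniformly; since $|\partial B_R|=O(R^{d-1})$, these bounds yield $J_1(R)=o(1)$. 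For $J_2$ no pointwise decay of $u$ is available, so I use Cauchy--Schwarz: $\Phi_k(\bx,\cdot)$ is itself a radiating solution, whence $\|\partial_{r(\by)}\Phi_k(\bx,\cdot)-\ri k\Phi_k(\bx,\cdot)\|_{L^2(\partial B_R)}=o(1)$, and it suffices to prove that $\|u\|_{L^2(\partial B_R)}$ stays bounded as $R\to\infty$.

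This last estimate is the principal obstacle. It is furnished by the classical Rellich-type argument: expanding $\int_{\partial B_R}|\partial_r u-\ri k u|^2\rd s$ (which tends to zero by \rf{eqn:SRC}) as $\int|\partial_r u|^2\rd s + k^2\int|u|^2\rd s - 2k\operatorname{Im}\int\bar u\,\partial_r u\,\rd s$, and applying Green's first identity to $u$ in $D_R$ to rewrite the cross term as $\operatorname{Im}\int_{\partial B_R}\bar u\,\partial_r u\,\rd s = \operatorname{Im}\int_\Gamma\bar u\,\partial_\nu^+u\,\rd s$, which is independent of $R$, one deduces that $\int_{\partial B_R}|u|^2\rd s$ remains bounded. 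The only subtlety is justifying Green's identity in the Lipschitz, $H^1_{\mathrm{loc}}$ setting, which is standard given the trace and normal-derivative framework of \cite[Appendix~A]{ChGrLaSp:12}. With this in hand, $J_2(R)=o(1)$, hence $I_R(\bx)\to 0$, giving $u(\bx)$ for $\bx\in\Omega_+$ and $0$ for $\bx\in\Omega_-$, as claimed in \eqref{eq:grt_ext}.
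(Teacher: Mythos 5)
Your argument is correct, and it is essentially the standard proof of this result: the paper itself does not prove Theorem~\ref{thm:grt_ext} but cites \cite[Theorem~2.21]{ChGrLaSp:12}, whose proof is exactly this truncation of $\Omega_+$ by a large ball, application of the interior representation theorem, and elimination of the spherical boundary term via the radiation condition together with the Rellich-type bound on $\|u\|_{L^2(\partial B_R)}$. All the sign bookkeeping, the splitting of $I_R$, and the Green's-identity justification of the boundedness of $\int_{\partial B_R}|u|^2\,\rd s$ check out.
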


The formulae~(\ref{eq:grt_int}) and~(\ref{eq:grt_ext}) lie at the heart of boundary integral methods.  Each expresses the solution throughout the domain in terms of its Dirichlet and Neumann traces on the boundary.  Thus for Dirichlet problems, if the Neumann data can be computed then these formulae immediately give a representation for the solution anywhere in the domain.  Likewise, for Neumann or impedance problems, knowledge of the Dirichlet data is sufficient to determine the solution anywhere in the domain.

In order to derive BIEs for~(\ref{eqn:HE}), for which the ``unknown'' to be computed will be the complementary boundary data required to complete the representation formula for the solution, we need to take Dirichlet and Neumann traces of~(\ref{eq:grt_int}) and~(\ref{eq:grt_ext}).  First, we define the acoustic single- and double-layer operators, $S_k:H^{-1/2}(\Gamma)\to H^{1/2}(\Gamma)$ and $D_k: H^{1/2}(\Gamma)\to H^{1/2}(\Gamma)$, by
\begin{equation} \label{eq:single_op}
  S_k \phi(\bx) := \int_\Gamma \Phi_k(\bx,\by) \phi(\by) \rd s(\by), \quad \bx\in \Gamma,
\end{equation}
and
\[ 
  D_k \phi(\bx) := \int_\Gamma \frac{\partial \Phi_k(\bx,\by)}{\partial \nu(\by)}  \phi(\by) \rd s(\by), \quad \bx\in\Gamma,
\]
respectively, where for Lipschitz $\Gamma$ and $\phi\in H^{1/2}(\Gamma)$, $D_k\phi$ is understood as a Cauchy principal value integral.  These appear on taking boundary traces of ${\cal S}_k$ and ${\cal D}_k$. When we apply the normal derivative operator $\partial_\nu$ to ${\cal S}_k$ and ${\cal D}_k$, we encounter two additional boundary integral operators, the acoustic adjoint double-layer operator $D_k^\prime: H^{-1/2}(\Gamma)\to H^{-1/2}(\Gamma)$ and the acoustic hypersingular operator $H_k: H^{1/2}(\Gamma)\to H^{-1/2}(\Gamma)$, defined by
\begin{equation} \label{eq:Dkpr}
  D^\prime_k \phi(\bx) := \int_\Gamma \frac{\partial \Phi_k(\bx,\by)}{\partial \nu(\bx)} \phi(\by) \rd s(\by),
\end{equation}
and
\[ 
  H_k \phi(\bx) := \frac{\partial}{\partial \nu(\bx)}\int_\Gamma  \frac{\partial \Phi_k(\bx,\by)}{ \partial \nu(\by)}  \psi(\by) \rd s(\by),
\]
respectively.  For Lipschitz $\Gamma$ and $\phi\in L^2(\Gamma)$, \eqref{eq:Dkpr} makes sense as a Cauchy principal value integral, for almost all $\bx\in \Gamma$, while, for $\psi\in H^1(\Gamma)$, $H_k\psi\in L^2(\Gamma)$ is defined in the sense explained in \cite[\S2.3]{ChGrLaSp:12}.

The following jump relations are shown in \cite{McLean}, \cite[\S2.3]{ChGrLaSp:12}.  As operators on $H^{-1/2}(\Gamma)$,
\begin{equation} \label{eq:S traces}
\gamma_\pm {\cal S}_k = S_k, \quad \partial^\pm_\nu {\cal S}_k = \mp \textstyle{\frac{1}{2}}I+D_k^\prime,
\end{equation}
where $I$ is the identity operator. Similarly, as operators on $H^{1/2}(\Gamma)$, we have
\begin{equation} \label{eq:D trace1}
\gamma_\pm {\cal D}_k = \pm \textstyle{\frac{1}{2}}I+D_k, \quad \partial_\nu^\pm {\cal D}_k = H_k.
\end{equation}

Applying the Dirichlet and Neumann trace operators to~(\ref{eq:grt_int}) and~(\ref{eq:grt_ext}), and using~(\ref{eq:S traces}) and~(\ref{eq:D trace1}), we obtain the Calder\'{o}n projection:
\begin{equation} \label{eq:calderon0}
c_{\pm} u = P_{\pm} c_{\pm} u,
\end{equation}
where  $c_{\pm} u = [\gamma_{\pm} u, \partial_\nu^{\pm} u]^T$ is the Cauchy data for $u$ on $\Gamma$, and
\[ 
P_\pm = \pm \left[\begin{array}{cc}
              \gamma_\pm {\cal D}_k & -\gamma_\pm {\cal S}_k \\
              \partial_\nu^\pm {\cal D}_k & -\partial_\nu^\pm {\cal S}_k
            \end{array}\right] = \textstyle{\frac{1}{2}}I \pm \left[\begin{array}{cc}
              D_k & -S_k \\
              H_k & -D_k^\prime
            \end{array}\right],
\]
where $I$ is the ($2\times 2$ matrix) identity operator.  Explicitly, we can rewrite the equations~(\ref{eq:calderon0}) as
\[ 
  \left(D_k-\textstyle{\frac{1}{2}}I\right) \gamma_+ u - S_k \partial_\nu^+ u =0
\]
and
\[ 
  H_k \gamma_+ u - \left(D^\prime_k+\textstyle{\frac{1}{2}}I\right) \partial_\nu^+ u = 0,
\]
for the exterior problem, and
\begin{equation} \label{eq:bie_idp1}
  \left(D_k+\textstyle{\frac{1}{2}}I\right) \gamma_- u - S_k \partial_\nu^- u =0
\end{equation}
and
\[ 
  H_k \gamma_- u - \left(D^\prime_k-\textstyle{\frac{1}{2}}I\right) \partial_\nu^- u = 0,
\]
for the interior problem.  Each of these equations is a linear relationship between the components $\gamma_\pm u$ and $\partial_\nu^\pm u$ of the Cauchy data $c_\pm u$.  These key results can be summarised in the following lemma \cite[Lemma~2.22]{ChGrLaSp:12}.

\begin{lem} \label{lem:grt_boundary}
If $u\in H^1(\Omega_-)\cap C^2(\Omega_-)$ and, for some $k> 0$, $\Delta u + k^2u = 0$ in $\Omega_-$, then
$P_-\,c_-u = c_-u$.
Similarly, if $u\in H^1_{\mathrm{loc}}(\Omega_+) \cap C^2(\Omega_+)$ and, for some $k>0$, $\Delta u + k^2 u = 0$ in $\Omega_+$ and $u$ satisfies the Sommerfeld radiation condition \eqref{eqn:SRC} in $\Omega_+$, then $P_+\,c_+u = c_+u$.
\end{lem}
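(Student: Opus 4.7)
The plan is to obtain both identities by applying the Dirichlet and Neumann trace operators directly to the Green's representation formulas of Theorems~\ref{thm:grt_int} and~\ref{thm:grt_ext}, and then evaluating the boundary traces of the single- and double-layer potentials via the jump relations \eqref{eq:S traces} and \eqref{eq:D trace1}. There is no substantial new estimate to prove: the lemma is essentially a bookkeeping exercise once the layer-potential mapping properties are in place.

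For the interior statement, I would start from the identity
\[
{\cal S}_k \partial_\nu^- u - {\cal D}_k \gamma_- u = u \quad \text{in } \Omega_-,
\]
supplied by Theorem~\ref{thm:grt_int}. Because $u\in H^1(\Omega_-)$ and $\Delta u + k^2 u =0$ in $\Omega_-$, standard trace theory on Lipschitz domains places $\gamma_- u \in H^{1/2}(\Gamma)$ and $\partial_\nu^- u \in H^{-1/2}(\Gamma)$, which are exactly the domains in which the jump relations are formulated. Applying $\gamma_-$ to both sides and inserting $\gamma_- {\cal S}_k = S_k$ and $\gamma_- {\cal D}_k = -\frac{1}{2}I + D_k$ yields the first row of $c_- u = P_- c_- u$; this is, after rearrangement, precisely \eqref{eq:bie_idp1}. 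Applying $\partial_\nu^-$ and inserting $\partial_\nu^- {\cal S}_k = \frac{1}{2}I + D_k'$ and $\partial_\nu^- {\cal D}_k = H_k$ produces the second row. The exterior statement is handled identically, starting instead from Theorem~\ref{thm:grt_ext} and using the $+$ versions of the jump relations; the sign flips between the two Green's identities, between the two sets of jump relations, and between $P_+$ and $P_-$ all cancel consistently.

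The only genuine technical point, and what I expect to be the main obstacle, is the legitimacy of applying the hypersingular operator $H_k$ to $\gamma_\pm u \in H^{1/2}(\Gamma)$: the explicit integral in the displayed formula for $H_k$ only makes sense at $H^1(\Gamma)$ density, as the paper notes. I would dispose of this by invoking the bounded extension $H_k : H^{1/2}(\Gamma) \to H^{-1/2}(\Gamma)$ of \cite[\S2.3]{ChGrLaSp:12}; this extension is well-posed because ${\cal D}_k$ maps $H^{1/2}(\Gamma)$ continuously into $H^1_{\mathrm{loc}}(\R^d\setminus\Gamma)$ with Helmholtz-solution image, so its interior and exterior normal traces lie a priori in $H^{-1/2}(\Gamma)$, and the operator-form jump relation $\partial_\nu^\pm {\cal D}_k = H_k$ is exactly the statement that both of these equal a single operator $H_k$. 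Once this interpretation is adopted, each of the four scalar equations obtained above is just the natural continuous extension of what a classical calculation in the smooth case would produce, and the lemma follows.
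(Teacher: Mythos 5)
Your proposal is correct and follows essentially the same route as the paper, which obtains the Calder\'on relations $c_\pm u = P_\pm c_\pm u$ precisely by applying the Dirichlet and Neumann trace operators to the Green's representation formulas \eqref{eq:grt_int} and \eqref{eq:grt_ext} and invoking the jump relations \eqref{eq:S traces} and \eqref{eq:D trace1} (citing \cite[Lemma~2.22]{ChGrLaSp:12} for the details). Your sign bookkeeping ($\gamma_-{\cal D}_k=-\frac12 I+D_k$, $\partial_\nu^-{\cal S}_k=\frac12 I+D_k'$) and your treatment of the extension of $H_k$ to $H^{1/2}(\Gamma)$ are both consistent with the paper's conventions.
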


This lemma is the basis for all the standard direct BIE formulations for interior and exterior acoustic BVPs.  For example, if $u$ satisfies the exterior Dirichlet problem \eqref{prob:edp}, then it follows immediately from Lemma~\ref{lem:grt_boundary} that $\partial_\nu^+ u$ satisfies both
\begin{equation} \label{eq:BIE_edp1}
  S_k \partial_\nu^+ u = \left(D_k-\textstyle{\frac{1}{2}}I\right) h
\end{equation}
and
\begin{equation} \label{eq:BIE_edp2}
  \left(D^\prime_k+\textstyle{\frac{1}{2}}I\right) \partial_\nu^+ u = H_k h.
\end{equation}
Similarly, if $u$ satisfies the interior Dirichlet problem \eqref{prob:idp}, then it follows from Lemma \ref{lem:grt_boundary} that
\begin{equation} \label{eq:BIE_idp1}
  S_k \partial_\nu^- u = \left(D_k+\textstyle{\frac{1}{2}}I\right) h
\end{equation}
and
\[ 
  \left(D^\prime_k-\textstyle{\frac{1}{2}}I\right) \partial_\nu^- u = H_k h.
\]

All these equations are BIEs of the form
\begin{equation} \label{eq:obvious}
  A v = f
\end{equation}
where $A$ is a linear boundary integral operator, or a linear combination of such operators and the identity, $v$ is the solution to be determined and $f$ is given data.  Noting that the same operator $A$ can arise from both interior and exterior problems, it is immediately apparent that, although exterior acoustic problems are generically uniquely solvable, the natural BIE formulations of these problems need not be uniquely solvable for all wavenumbers $k$.  As a specific instance, we noted above that the homogeneous interior Dirichlet problem has non-trivial solutions at a sequence $k_n$ of positive wavenumbers. If $k=k_n$ and $u$ is such a solution then $\partial_\nu^- u$ is a non-trivial solution of~(\ref{eq:BIE_idp1}) with $h=0$ (see, e.g., \cite[Theorem~2.4]{ChGrLaSp:12}) and so, for $k=k_n$, the BIE~(\ref{eq:BIE_edp1})
for the exterior Dirichlet problem \eqref{prob:edp} has infinitely many solutions.

Similar BIE formulations (with the same problems of non-uniqueness) can be derived by utilising the fact that the layer potentials satisfy~(\ref{eqn:HE}) and~(\ref{eqn:SRC}); to satisfy the BVPs, it just remains to take the Dirichlet or Neumann trace of the layer potentials (using the jump relations as above), and then to match with the boundary data.  The resulting formulations are known as indirect BIEs; we do not discuss these further here. As discussed above we will focus on direct formulations in which the unknown to be determined is the normal derivative or trace of the solution in the domain; it is possible as we discuss in~\S\ref{sec:HNA} to bring high frequency asymptotics to bear to understand the behaviour of these solutions and so design efficient approximation spaces.

In order to derive uniquely solvable BIE formulations, the classical approach (dating back to \cite{BuMi:71} for the direct and \cite{BrWe:65,Le:65,Pa:65} for the indirect formulation) is to solve a linear combination of the two equations arising for each problem from the Calder\'{o}n projection.  Taking a linear combination of \eqref{eq:BIE_edp1} and \eqref{eq:BIE_edp2}, we obtain
\begin{equation} \label{eq:BIE_dp_main}
  A_{k,\eta}^\prime \partial_\nu^+ u = B_{k,\eta} h,
\end{equation}
where $\eta\in \C$ is a parameter that we need to choose and $A_{k,\eta}^\prime$, $B_{k,\eta}$ are the operators
\[ 
  A_{k,\eta}^\prime = \textstyle{\frac{1}{2}}I + D_k^\prime -\ri \eta S_k \mbox{ and }  B_{k,\eta} = H_k + \ri \eta \left(\textstyle{\frac{1}{2}}I - D_k \right).
\]
Both $A_{k,\eta}^\prime$ and $B_{k,\eta}$ are invertible (considered as operators between appropriate pairs of Sobolev spaces) for all $k>0$ provided $\mathrm{Re}\, \eta \neq 0$, see e.g. \cite[Theorem~2.27]{ChGrLaSp:12}.

The corresponding direct formulation for the exterior impedance problem is
\begin{equation} \label{eq:BIE_eip3}
C_{k,\eta,\beta} \gamma_+ u = A_{k,\eta}^\prime h,
\end{equation}
where
\begin{equation} \label{eq:Cdef}
C_{k,\eta,\beta} \phi := B_{k,\eta}\phi + \ri k A_{k,\eta}^\prime (\beta \phi), \quad \phi\in H^{1/2}(\Gamma),
\end{equation}
is invertible (considered as an operator between an appropriate pair of Sobolev spaces) for all $k>0$ provided $\mathrm{Re}\, \eta \neq 0$;  again see \cite[Theorem~2.27]{ChGrLaSp:12}.

That the exterior Dirichlet, Neumann and impedance BVPs can be solved by combined potential direct integral equation formulations follows from, e.g., \cite[Corollary~2.28]{ChGrLaSp:12}.  Specifically:

\begin{cor} \label{cor:main_equivalence}
Suppose that $k>0$ and $\eta\in \C$ with $\mathrm{Re}\,\eta\neq 0$. Then both the following statements hold.

(i) If $u$ is the unique solution of \eqref{prob:edp} then $\partial_\nu^+ u\in H^{-1/2}(\Gamma)$ is the unique solution of \eqref{eq:BIE_dp_main}. Further, if $h=\gamma_+ u\in H^s(\Gamma)$ with $1/2<s\leq 1$ then $\partial_\nu^+ u\in H^{s-1}(\Gamma)$.

(ii) If $u$ is the unique solution of \eqref{prob:eip} then $\gamma_+ u\in H^{1/2}(\Gamma)$ is the unique solution of \eqref{eq:BIE_eip3}. Further, if $h=\gamma_+ u\in H^s(\Gamma)$ with $-1/2<s\leq 0$ then $\gamma_+ u\in H^{s+1}(\Gamma)$.
\end{cor}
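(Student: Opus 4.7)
The overall plan is to deduce both parts directly from the Calder\'{o}n projection identity of Lemma~\ref{lem:grt_boundary} together with the invertibility results cited from \cite[Theorem~2.27]{ChGrLaSp:12}. The corollary is essentially a packaging statement: existence of a BIE solution comes from Green's representation, uniqueness from the invertibility of $A_{k,\eta}^\prime$ and $C_{k,\eta,\beta}$, and the regularity assertions follow because these operators are in fact isomorphisms on a whole range of Sobolev spaces.

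For part~(i), I would proceed as follows. The unique solution $u$ of \eqref{prob:edp} belongs to $H^1_{\mathrm{loc}}(\Omega_+)\cap C^2(\Omega_+)$ and satisfies the Sommerfeld condition, so Lemma~\ref{lem:grt_boundary} applies: $P_+ c_+ u = c_+ u$. Writing out the two components of this identity yields exactly \eqref{eq:BIE_edp1} and \eqref{eq:BIE_edp2}, and multiplying the second by $1$ and adding $-\ri\eta$ times the first (equivalently: subtracting $\ri\eta$ times \eqref{eq:BIE_edp1} from \eqref{eq:BIE_edp2}) produces \eqref{eq:BIE_dp_main} with $v=\partial_\nu^+u$ and $f=B_{k,\eta}h$. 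Thus $\partial_\nu^+u\in H^{-1/2}(\Gamma)$ solves \eqref{eq:BIE_dp_main}; uniqueness is immediate because $\mathrm{Re}\,\eta\neq 0$ makes $A_{k,\eta}^\prime$ invertible on $H^{-1/2}(\Gamma)$. For the regularity claim I would invoke the sharper mapping/invertibility statements in \cite[Theorem~2.27]{ChGrLaSp:12}: on a Lipschitz $\Gamma$, $A_{k,\eta}^\prime$ is an isomorphism $H^{s-1}(\Gamma)\to H^{s-1}(\Gamma)$ and $B_{k,\eta}$ is bounded $H^{s}(\Gamma)\to H^{s-1}(\Gamma)$ for $1/2 < s\leq 1$. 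Since $\partial_\nu^+ u = (A_{k,\eta}^\prime)^{-1} B_{k,\eta} h$, the regularity of $h$ transfers to $\partial_\nu^+ u\in H^{s-1}(\Gamma)$.

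Part~(ii) is handled in exactly the same spirit. From Lemma~\ref{lem:grt_boundary}, $c_+u$ satisfies both Calder\'{o}n equations; substituting the impedance condition $\partial_\nu^+ u = h - \ri k \beta \gamma_+ u$ into a suitable linear combination of these two equations collapses everything to a single equation for the unknown $\gamma_+ u$ of the form \eqref{eq:BIE_eip3}, with the definition \eqref{eq:Cdef} of $C_{k,\eta,\beta}$ arising naturally from the substitution. Uniqueness again comes from the invertibility of $C_{k,\eta,\beta}$ (under $\mathrm{Re}\,\eta\neq 0$) cited from \cite[Theorem~2.27]{ChGrLaSp:12}, and the shift in regularity follows from the same theorem: $C_{k,\eta,\beta}$ is an isomorphism $H^{s+1}(\Gamma)\to H^{s}(\Gamma)$ for the stated range of $s$, while the right hand side $A_{k,\eta}^\prime h$ lies in $H^{s}(\Gamma)$ when $h\in H^{s}(\Gamma)$.

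The only genuine obstacle is the regularity statement, since everything else is a bookkeeping exercise in Green's identities. The sharp Sobolev mapping properties of $S_k$, $D_k$, $D_k^\prime$, $H_k$ on merely Lipschitz boundaries are delicate, and the restricted ranges $1/2<s\leq 1$ and $-1/2<s\leq 0$ reflect exactly the window in which these operators retain the needed continuity and invertibility. Rather than re-prove these properties, I would rely wholesale on \cite[Theorem~2.27]{ChGrLaSp:12}, so the corollary reduces to recording what its conclusions imply for the Cauchy data of the unique BVP solution.
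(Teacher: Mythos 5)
Your proposal is correct and follows essentially the route the paper itself relies on: the paper derives \eqref{eq:BIE_dp_main} and \eqref{eq:BIE_eip3} from the Calder\'{o}n relations of Lemma~\ref{lem:grt_boundary} in the surrounding text and then simply cites \cite[Theorem~2.27, Corollary~2.28]{ChGrLaSp:12} for the invertibility and Sobolev mapping properties that give uniqueness and the regularity shift, exactly as you do. The only point worth making explicit is that the regularity claim needs the inverse of $A_{k,\eta}^\prime$ (resp.\ $C_{k,\eta,\beta}$) on the smoother space to agree with that on $H^{-1/2}(\Gamma)$ (resp.\ $H^{1/2}(\Gamma)$), which follows from injectivity on the larger space; this is implicit in your ``the regularity of $h$ transfers'' step and is harmless.
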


Although the combined potential integral equations~(\ref{eq:BIE_dp_main}) and (\ref{eq:BIE_eip3}) are the most common integral equation formulations for exterior Dirichlet and impedance scattering problems, other formulations are possible.  One that is of particular interest for boundary element methods is the so called ``star-combined integral equation'', proposed for the exterior Dirichlet problem in the case when $\Omega_-$ is star-shaped with respect to an appropriately chosen origin in~\cite{SpChGrSm:11}.

Specifically, if $u$ satisfies the exterior Dirichlet problem \eqref{prob:edp} with $h\in H^1(\Gamma)$, then for $\eta(\bx) := k|\bx| + \ri (d-1)/2$, $\bx\in\Gamma$, we have
\begin{equation} \label{eq:mainie_gen}
  \scA_k  \partial_\nu^+ u = (\bx\cdot \left[ \nu H_k + \nabla_\Gamma D_k - \textstyle{\frac{1}{2}}\nabla_\Gamma\right] - \ri \eta  \left(-\textstyle{\frac{1}{2}}I+ D_k\right))h,
\end{equation}
where the ``star-combined'' operator $\scA_k: L^2(\Gamma)\to L^2(\Gamma)$ is defined by
\begin{equation} \label{eq:calAk}
  \scA_k := \bx\cdot \nu \left(\textstyle{\frac{1}{2}}I+D_k^\prime\right) + \bx \cdot \nabla_\Gamma S_k - \ri \eta S_k.
\end{equation}
It is shown in~\cite{SpChGrSm:11} that, if $\Omega_-$ is star-shaped with respect to the origin, specifically if, for some $c>0$,
\begin{equation} \label{eq:xdotn}
  \bx \cdot \nu \geq c, \quad \mbox{for almost all } \bx\in \Gamma,
\end{equation}
then $\scA_k:L^2(\Gamma)\to L^2(\Gamma)$ is invertible with $\|\scA_k^{-1}\|_{L^2(\Gamma)\leftarrow L^2(\Gamma)} \leq 2/c$.  Indeed, $\scA_k:L^2(\Gamma)\to L^2(\Gamma)$ is in fact coercive, as defined in~\S\ref{sec:BEM} below, 
a much stronger property whose significance for numerical solution is explained in~\S\ref{sec:BEM}.

Turning to the Dirichlet screen problem \eqref{prob:screen}, it follows easily from Theorem \ref{thm:grt_ext} that, where $\Gamma$ has the form \eqref{eqn:screen} and $D:= \R^d\setminus \overline \Gamma$, if $u\in W^1_{\mathrm{loc}}(D) \cap C^2(D)$ and, for some $k>0$, $\Delta u + k^2 u = 0$ in $D$ and $u$ satisfies the Sommerfeld radiation condition \eqref{eqn:SRC} in $D$, then
\begin{equation} \label{eq:grt_screen}
-{\cal S}_k \left[\partial_\nu u\right](\bx) + {\cal D}_k [u](\bx) =
                                                               u(\bx),  \quad \bx\in D,
\end{equation}
where $[u]:= \gamma_+u-\gamma_-u\in \tilde H^{1/2}(\Gamma)$ and $\left[\partial_\nu u\right] = \partial_\nu^+ u-\partial_\nu^- u\in \tilde H^{-1/2}(\Gamma)$ denote the jumps in $u$ and its normal derivative across the plane $x_d=0$ containing $\Gamma$. Here the normal is directed in the $x_d$ direction and, for $s\in\R$, the ``tilde'' space $\tilde H^s(\Gamma)$ denotes the set of those $\phi\in H^s(\R^{d-1})$ (here we are identifying $\R^{d-1}$ with the plane containing $\Gamma$) that have support in $\overline \Gamma$. It is clear that the jumps in $u$ and its normal derivative across the plane $x_d=0$, which are zero outside the screen, are in these ``tilde'' spaces. Thus if $u$ satisfies the Dirichlet screen problem, in which case $[u]=0$ (see \cite[\S3.3]{ScreenCoerc}), it holds that
$$
u(\bx) = -{\cal S}_k \left[\partial_\nu u\right](\bx),  \quad \bx\in D,
$$
and, taking traces, that
\begin{equation} \label{eq:screenie}
S_k\left[\partial_\nu u\right](\bx) = -h(\bx), \quad \bx\in \Gamma.
\end{equation}

The operator $S_k$ in this equation, defined on the screen $\Gamma$ by \eqref{eq:single_op}, is bounded and invertible as an operator $S_k:\tilde H^{-1/2}(\Gamma)\to H^{1/2}(\Gamma)$; see \cite{stephan87,ScreenCoerc}. Indeed as noted in \cite{Co:04} (this result uses that $\Gamma$ is planar, and is derived by applying the Fourier transform which diagonalises $S_k$; see \cite{ScreenCoerc}), and noting that $H^s(\Gamma)$ is the dual space of $\tilde H^{-s}(\Gamma)$ (see \cite{McLean,ScreenCoerc}), $S_k:\tilde H^{-1/2}(\Gamma)\to H^{1/2}(\Gamma)$, like the operator $\scA_k$ introduced above, is coercive in the sense of \S\ref{sec:proj} below, moreover with the $k$-dependence of the coercivity constants understood in each case.

We remark that it is often considered surprising that it is possible to write down coercive formulations of BVPs for the Helmholtz equation, which BVPs are often considered to be ``highly indefinite'', at least in the high frequency regime; see the discussion in~\cite{MoSp14}.

If $u$ satisfies the transmission problem \eqref{prob:trans}, then both representations \eqref{eq:grt_int} and \eqref{eq:grt_ext} hold. Applying \eqref{eq:S traces} and \eqref{eq:D trace1} we deduce that a linear operator equation of the form \eqref{eq:obvious}  holds for the unknown $v = \left[
                       \gamma_+u,
                       \partial_\nu^+ u \right]^T \in H^{1/2}(\Gamma)\times H^{-1/2}(\Gamma)$, with
\begin{equation} \label{eq:BIE_trans}
A = \left[\begin{array}{cc}
            I + D_{k_-}-D_{k_+} & S_{k_+}-S_{k_-} \\
            H_{k_-}-H_{k_+} & I + D_{k_+}^\prime-D_{k_-}^\prime
          \end{array}
\right], \;\; f = \left[
                                        \begin{array}{c}
                                          -\frac{1}{2}h-D_{k_-}h + S_{k_-}g \\
                                            -\frac{1}{2}g+D_{k_-}^\prime g - H_{k_-}h\\
                                        \end{array}
                                      \right].
\end{equation}
The operator $A$ is bounded and invertible as an operator on $H^{1/2}(\Gamma)\times H^{-1/2}(\Gamma)$, but also, adapting arguments of \cite{ToWe93}, as an operator on $H^{1}(\Gamma)\times L^2(\Gamma)$, and as an operator on $L^2(\Gamma)\times L^2(\Gamma)$ \cite{GrHeLa:13}.

We conclude this section by stating precisely direct boundary integral equation formulations for the sound-soft, impedance, screen, and transmission scattering problems, \eqref{prob:ssp}, \eqref{prob:isp}, \eqref{prob:scsp}, and \eqref{prob:tsp}.  These follow immediately from: the integral equation formulations \eqref{eq:BIE_dp_main} and \eqref{eq:mainie_gen} for the sound-soft scattering problem \eqref{prob:ssp}, and \eqref{eq:screenie} for the screen problem \eqref{prob:scsp}, in each case applying these equations with $u$ replaced by $u^S$ and $h= -u^I|_\Gamma$; from the integral equation \eqref{eq:BIE_eip3} for the impedance scattering problem, with $u$ replaced by $u^S$ and $h = -(\partial_{\nu} + \ri k \beta \gamma) u^I|_{\Gamma}$; and from \eqref{eq:obvious} and \eqref{eq:BIE_trans} for the transmission problem \eqref{prob:tsp}, with $u$ replaced by $u^*$ defined as $u^S$ in $\Omega_+$ and $u$ in $\Omega_-$, and $h=-u^I|_\Gamma$, $g=-\partial_\nu u^I|_\Gamma$.  But thanks to the special form of the boundary data $h$ and $g$, we can work with versions of these integral equations with simplified expressions for the inhomogeneous terms.  Specifically (see, e.g., \cite[Theorem~2.43]{ChGrLaSp:12}), in the case that $u^S$ satisfies the sound-soft scattering problem \eqref{prob:ssp} it holds that $\partial_\nu^+ u\in L^2(\Gamma)$ and
\begin{equation} \label{eq:grt_ss}
  u(\bx) = u^I(\bx) - \int_\Gamma \Phi_k(\bx,\by) \partial_\nu^+ u(\by)\, \rd s(\by), \quad \bx\in \Omega_+;
\end{equation}
similarly, if $u^S$ satisfies the screen scattering problem \eqref{prob:scsp}, then $\left[\partial_\nu u\right]\in \tilde H^{-1/2}(\Gamma)$ and
\begin{equation} \label{eq:grt_screen_scat}
  u(\bx) = u^I(\bx) - \int_\Gamma \Phi_k(\bx,\by) \left[\partial_\nu u\right] (\by)\, \rd s(\by), \quad \bx\in D.
\end{equation}
In the case that $u^S$ satisfies the impedance scattering problem \eqref{prob:isp} it holds that $\gamma_+ u\in H^1(\Gamma)$ and
\begin{equation} \label{eq:grt_imp}
  u(\bx) = u^I(\bx) + \int_\Gamma \left(\frac{\partial\Phi_k(\bx,\by)}{\partial \nu(\by)} +\ri k\beta(\by) \Phi_k(\bx,\by)\right) \gamma_+ u(\by)\, \rd s(\by), \quad \bx\in \Omega_+.
\end{equation}
Finally, in the case that $u$ satisfies the transmission scattering problem \eqref{prob:tsp} it holds that $\gamma_+ u =\gamma_- u\in H^1(\Gamma)$ and $\partial_\nu^+u=\partial_\nu^-u\in L^2(\Gamma)$, and (e.g., \cite{GrHeLa:13})
\begin{equation} \label{eq:grt_trans}
  u(\bx) = u^I(\bx) + \int_\Gamma \left(\frac{\partial\Phi_k(\bx,\by)}{\partial \nu(\by)}\gamma_+u(\by) -\Phi_k(\bx,\by) \partial_\nu^+ u(\by)\right)\, \rd s(\by), \quad \bx\in \Omega_+,
\end{equation}
while \eqref{eq:grt_int} holds in $\Omega_-$.

The direct versions of the combined potential equations are then the integral equation \eqref{eq:BIE_dp_main} for the sound-soft problem (with $u$ replaced by $u^S$ and $h=-u^I|_\Gamma$) and \eqref{eq:BIE_eip3} for the impedance scattering problem (with $u$ replaced by $u^S$ and $h=-(\partial_{\nu} + \ri k \beta \gamma) u^I|_{\Gamma}$).
Versions of these equations with simplified expressions for the inhomogeneous terms on the right hand side are as follows (see, e.g., \cite[Theorems~2.46, 2.47]{ChGrLaSp:12}).  Suppose that $u^S$ satisfies the sound-soft scattering problem \eqref{prob:ssp}. Then $\partial_\nu^+ u\in L^2(\Gamma)$ satisfies the integral equation
\begin{equation} \label{eq:BIE_sss3}
  A^\prime_{k,\eta} \partial_\nu^+ u = f_{k,\eta} := [\partial_\nu^+ u^I - \ri \eta u^I]|_\Gamma,
\end{equation}
and this equation is uniquely solvable for all $k>0$ if $\mathrm{Re}\,\eta\neq 0$.  In the case that $\Omega_-$ is star-shaped with respect to the origin, satisfying \eqref{eq:xdotn} for some $c>0$, and $\eta(\bx) := k|\bx| + \ri (d-1)/2$, $\bx\in\Gamma$, an alternative boundary integral equation formulation is
\begin{equation} \label{eq:BIE_sss4}
  \scA_k \partial_\nu^+ u = f_k := [\bx\cdot \nabla u^I-\ri\eta u^I]|_\Gamma,
\end{equation}
with $\scA_k$ defined by \eqref{eq:calAk}, and this equation is uniquely solvable for all $k>0$.  We will refer to \eqref{eq:BIE_sss4} as the {\em star-combined} integral equation.

If $u^S$ satisfies the impedance scattering problem \eqref{prob:eip} and $\eta\in \C$, then $\gamma_+ u\in H^1(\Gamma)$ satisfies the integral equation
\begin{equation} \label{eq:BIE_imp2}
C_{k,\eta,\beta} \,\gamma_+ u = -\left.\left[\frac{\partial u^I}{\partial \nu}-\ri \eta u^I\right]\right|_\Gamma,
\end{equation}
with $C_{k,\eta,\beta}$ defined by \eqref{eq:Cdef}. If $\mathrm{Re}\, \eta \neq 0$ then $\gamma_+ u$ is the unique solution in $H^{1/2}(\Gamma)$ of this equation.
If $u^S$ satisfies the screen scattering problem \eqref{prob:scsp} then, from \eqref{eq:screenie}, it follows that $\left[\partial_\nu u\right]\in \tilde H^{-1/2}(\Gamma)$ satisfies
\begin{equation} \label{eq:BIE_sc2}
S_k \left[\partial_\nu u\right] = u^I|_\Gamma\, .
\end{equation}
Finally, if $u$ satisfies the transmission scattering problem \eqref{prob:tsp} then  \eqref{eq:obvious} holds for $v = \left[\gamma_+u, \partial_\nu^+u\right]^T\in H^1(\Gamma)\times L^2(\Gamma)$ with $A$ given by \eqref{eq:BIE_trans} and $f=\left[ u^I|_\Gamma, \partial_\nu u^I|_\Gamma\right]$  \cite{GrHeLa:13}.

\section{The boundary element method}
\label{sec:BEM}

In this section we describe methods for the numerical solution of integral equations of the form~(\ref{eq:obvious}), i.e.
\begin{equation}
  A v  = f,
  \label{eqn:BIE}
\end{equation}
where $A: \cV\to \cV^\prime$ is a linear boundary integral operator mapping some Hilbert space $\cV$ to its dual space $\cV^\prime$, or a linear combination of such operators and the identity, $v\in\cV$ is the solution to be determined and $f\in\cV^\prime$ is given data.  Specifically: for the integral equation \eqref{eq:BIE_sss3}, we have $A=A^\prime_{k,\eta}$, $\cV=\cV^\prime= L^2(\Gamma)$, $v=\partial_\nu^+ u$, and $f=f_{k,\eta}$; for \eqref{eq:BIE_sss4}, we have $A=\scA_k$, $\cV=\cV^\prime= L^2(\Gamma)$, $v=\partial_\nu^+ u$,  and $f=f_{k}$; for \eqref{eq:BIE_imp2}, we have $A=C_{k,\eta,\beta}$, $\cV=H^{1/2}(\Gamma)$, $\cV^\prime= H^{-1/2}(\Gamma)$, $v=\gamma_+ u$,  and $f=-\left.\left[\frac{\partial u^I}{\partial \nu}-\ri \eta u^I\right]\right|_\Gamma$; and for \eqref{eq:BIE_sc2} we have $A=S_k$, $\cV = \tilde H^{-1/2}(\Gamma)$, $\cV^\prime = H^{1/2}(\Gamma)$, $v=[\partial_\nu u]$, and $f= u_I|_\Gamma$. Finally, for the transmission scattering problem \eqref{prob:tsp}, $A$ is given by   \eqref{eq:BIE_trans}, we can take $\cV=\cV^\prime = L^2(\Gamma)\times L^2(\Gamma)$,  and $v = \left[\gamma_+u, \partial_\nu^+u\right]^T$, $f=\left[ u^I|_\Gamma, \partial_\nu u^I|_\Gamma\right]$.

In order that the equation we are solving is well-posed, it is important that the boundary integral operator $A$ is invertible.  In order to prove convergence of numerical schemes and deduce error estimates, additional properties of the operator are also required, as we discuss below.

The most commonly used methods for the solution of equations of the form~(\ref{eqn:BIE}) arising from scattering problems are Galerkin, collocation and Nystr\"{o}m schemes.  For an operator equation of the form~(\ref{eqn:BIE}) the
Galerkin method consists of choosing a finite dimensional approximating space $\cV_N\subset\cV$ and then seeking an approximate solution $v_N \in \cV_N$ such that
\begin{equation}
  \langle A v_N , w_N\rangle = \langle f, w_N \rangle \ , \;\;\mbox{ for all } w_N\in \cV_N,
  \label{eqn:gal_gen}
\end{equation}
where, for $f\in \cV^\prime$, $w\in \cV$, $\langle f,w\rangle$ denotes the action of the functional $f$ on $w$.  In all of the examples we consider below, this duality pairing is just the $L^2$ inner product.  In terms of practical implementation, this requires two integrations (corresponding to the integral operator $A$ and the duality pairing on the left hand side); to avoid this, the collocation or Nystr\"{o}m methods are often preferred.  For an operator equation of the form~(\ref{eqn:BIE}), with $A: L^{\infty}(\Gamma)\to L^{\infty}(\Gamma)$, the collocation method consists of choosing a finite dimensional approximating space $\cV_N$, and a set of $N$ distinct node points $x_1,\ldots,x_N\in\Gamma$, and then seeking an approximate solution $v_N \in \cV_N$ such that
\[
  A v_N (x_i) \ = \ f(x_i) \mbox{ for  } i=1,\ldots,N.
\]
The Nystr\"{o}m method is even simpler - it consists of just discretising the integral operator in~(\ref{eqn:BIE}) directly, via a quadrature rule, i.e. solving
\[ A_N v_N = f, \]
where $A_N$, $N=1,2,\ldots$, represents a convergent sequence of numerical integration operators (see, e.g., \cite[Chapter~12]{Kress}).

Although Nystr\"{o}m and collocation methods are both simpler to implement than Galerkin methods, we focus on the Galerkin method here.  One part of the rationale for this choice is that a key step (and our major focus below) in designing both Galerkin and collocation methods is designing subspaces $\cV_N$ that can approximate the solution accurately, with a relatively low number of degrees of freedom $N$. Everything we say below about designing $\cV_N$ for the Galerkin method applies equally to the collocation method, and indeed to other numerical schemes where we select the numerical solution from an approximating subspace.  The second part of our rationale is that, for collocation and Galerkin and other related methods, choosing a subspace from which we select the numerical solution is only part of the story. We have also to design our numerical scheme so that the numerical solution selected is ``reasonably close'' to the best possible approximation from the subspace. It is only for the Galerkin method that any analysis tools have been developed that can guarantee that this is the case for the hybrid numerical-asymptotic schemes we will describe in~\S\ref{sec:HNA} below, leading to guaranteed convergence and error bounds, at least for some problems and geometries, that we discuss in \S\ref{sec:HNA}. (In fairness we should note that, while it is not known how to carry out the complete error analysis needed, numerical experiments suggest that both Nystr\"{o}m \cite{BGMR04,GBR05,BrRe:07} and collocation \cite{ArChLa:07} hybrid methods can be as effective as Galerkin methods for numerical solution of certain high frequency scattering problems.)
The last part of our rationale is that, solving many of the implementation issues for users, open source Galerkin BEM software is becoming available, for example the boundary element software package BEM++ that exclusively uses Galerkin methods, which we discuss a little more in~\S\ref{sec:standard} below.

\subsection{The Galerkin method}
\label{sec:proj}

In this section we present a general framework for the solution of~(\ref{eqn:BIE}) by the Galerkin method, and we begin by writing it in weak form as follows:
find $v\in \cV$ such that
\begin{equation}\label{eq:var}
  \langle Av, w\rangle = \langle f,w\rangle, \quad \mbox{for all } w\in \cV.
\end{equation}
The Galerkin method for approximating \eqref{eq:var} then seeks a solution $v_N\in \cV_N\subset \cV$, where $\cV_N$ is a finite-dimensional subspace, requiring that
\begin{equation} \label{eq:varfin}
  \langle Av_N, w_N\rangle = \langle f,w_N\rangle, \quad \mbox{for all } w_N\in \cV_N.
\end{equation}
The standard analysis of Galerkin methods assumes that the operator $A: \cV\to \cV^\prime$ is injective and takes the form $A = B + C$,
where $C: \cV\to \cV^\prime$ is compact and $B: \cV\to \cV^\prime$ is coercive, by which we mean that, for some $\alpha>0$ (the {\em coercivity constant}),
\begin{equation} \label{eq:coercive}
  |\langle Bv, v\rangle| \geq \alpha \|v\|_{\cV}^2, \quad \mbox{for all } v\in \cV.
\end{equation}
In this case we say that $A$ is a compactly perturbed coercive operator and the following theorem holds (see, e.g., \cite[Theorem~8.11]{St:08}).
\begin{thm} \label{thm:ceagen}
Suppose the bounded linear operator $A:\cV\to\cV^\prime$ is a compactly perturbed coercive operator and is injective. Suppose moreover that $(\cV_N)_{N\in \N}$ is a sequence of approximation spaces converging to $\cV$ in the sense that
$$
  \inf_{w_N\in \cV_N} \, \|w-w_N\|_{\cV} \to 0 \quad \mbox{ as } N\to\infty,
$$
for every $w\in \cV$. Then there exists $N_0\in \N$ and $C>0$ such that, for $N\geq N_0$, \eqref{eq:varfin} has exactly one solution $v_N\in \cV_N$, which satisfies
the quasi-optimal error estimate
\begin{equation}
  \|v-v_N\|_{\cV} \leq C \, \inf_{w_N\in \cV_N} \, \|v-w_N\|_{\cV}.
  \label{eqn:quasiopt}
\end{equation}
\end{thm}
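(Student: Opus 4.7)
My plan is the classical two-step argument for Galerkin methods applied to compactly perturbed coercive operators, sometimes called Schatz's argument. First I would establish well-posedness at the continuous level: since $B$ is bounded and coercive, Lax--Milgram gives that $B: \cV \to \cV^\prime$ is boundedly invertible, so $A = B + C$ is a compact perturbation of an invertible operator and hence Fredholm of index zero. Combined with injectivity, this yields invertibility of $A$, and so \eqref{eq:var} has a unique solution $v \in \cV$.

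The heart of the proof is to establish a uniform discrete inf-sup condition: that there exist $\gamma>0$ and $N_0 \in \N$ such that, for all $N \geq N_0$ and all $v_N \in \cV_N$,
\begin{equation*}
\gamma \, \|v_N\|_\cV \leq \sup_{0 \neq w_N \in \cV_N} \frac{|\langle A v_N, w_N\rangle|}{\|w_N\|_\cV}.
\end{equation*}
I would prove this by contradiction: if it fails, there exist $N_j \to \infty$ and $v_j \in \cV_{N_j}$ with $\|v_j\|_\cV = 1$ and $\sup_{0 \neq w_N \in \cV_{N_j}} |\langle A v_j, w_N\rangle|/\|w_N\|_\cV \to 0$. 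Passing to a subsequence, $v_j \rightharpoonup v$ weakly in $\cV$, and compactness of $C$ upgrades this to $C v_j \to C v$ strongly in $\cV^\prime$. For arbitrary $\phi \in \cV$, I would use the density hypothesis to pick $\phi_j \in \cV_{N_j}$ with $\phi_j \to \phi$ in $\cV$; then $|\langle A v_j, \phi_j\rangle| \to 0$ by the vanishing sup, while on the other hand $\langle A v_j, \phi_j\rangle = \langle B v_j, \phi_j\rangle + \langle C v_j, \phi_j\rangle \to \langle A v, \phi\rangle$, using $v_j \rightharpoonup v$ for the $B$ term and strong convergence of $C v_j$ paired with $\phi_j \to \phi$ for the $C$ term. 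Hence $\langle Av, \phi\rangle = 0$ for every $\phi \in \cV$, so $Av = 0$ and injectivity forces $v = 0$. Now testing the vanishing sup with $w_N = v_j$ gives $\langle A v_j, v_j\rangle \to 0$, and $\langle C v_j, v_j\rangle \to 0$ by the strong/weak pairing, hence $\langle B v_j, v_j\rangle \to 0$, contradicting coercivity $|\langle B v_j, v_j\rangle| \geq \alpha$.

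Once the uniform inf-sup condition is in hand, the rest is routine. It implies that the Galerkin equation \eqref{eq:varfin} has a unique solution $v_N \in \cV_N$ for each $N \geq N_0$ and $f \in \cV^\prime$. Applying the inf-sup to $v_N - w_N \in \cV_N$ for arbitrary $w_N \in \cV_N$, combined with Galerkin orthogonality $\langle A(v - v_N), z_N\rangle = 0$ for all $z_N \in \cV_N$, yields
\begin{equation*}
\gamma \, \|v_N - w_N\|_\cV \leq \sup_{0 \neq z_N \in \cV_N} \frac{|\langle A(v_N - w_N), z_N\rangle|}{\|z_N\|_\cV} = \sup_{0 \neq z_N \in \cV_N} \frac{|\langle A(v - w_N), z_N\rangle|}{\|z_N\|_\cV} \leq \|A\|_{\cV \to \cV^\prime} \, \|v - w_N\|_\cV.
\end{equation*}
The triangle inequality and taking the infimum over $w_N \in \cV_N$ then produce \eqref{eqn:quasiopt} with $C = 1 + \gamma^{-1} \|A\|_{\cV \to \cV^\prime}$. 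The main obstacle is the Schatz contradiction step: everything hinges on compactness of $C$ turning weak convergence of the $v_j$ into strong convergence of $C v_j$, together with the density hypothesis which allows the vanishing sup to be tested against an arbitrary $\phi \in \cV$ via approximants $\phi_j \in \cV_{N_j}$.
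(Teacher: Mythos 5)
Your proof is correct and is the standard Schatz-type argument (well-posedness via Fredholm theory, a uniform discrete inf-sup condition established by contradiction using compactness of $C$ and the density of the spaces $\cV_N$, then quasi-optimality via Galerkin orthogonality). The paper does not prove this theorem itself but simply cites \cite[Theorem~8.11]{St:08}, whose proof is essentially the argument you give, so there is nothing to compare beyond noting that your write-up supplies the details the paper delegates to the reference.
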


This theorem is relevant to the combined potential integral equation~\eqref{eq:BIE_imp2} for the impedance scattering problem, for general Lipschitz $\Gamma$, since $C_{k,\eta,\beta}:H^{1/2}(\Gamma)\to H^{-1/2}(\Gamma)$ is a compactly perturbed coercive operator and is also injective for all $k>0$ if $\mathrm{Re}\, \eta\neq 0$ (see~\cite[Theorem~7.8]{McLean} and \cite[Theorem~2.27]{ChGrLaSp:12}).  Theorem \ref{thm:ceagen} also applies to the integral equation \eqref{eq:BIE_sss3} for the sound-soft scattering problem if $\Gamma$ is $C^1$, and also, for the 2D case, if $\Gamma$ is Lipschitz and a curvilinear polygon (see~\cite[\S2.11]{ChGrLaSp:12} for details).  However, it cannot be applied directly to  \eqref{eq:BIE_sss3} for general Lipschitz $\Gamma$, for which case it is an open question whether $A^\prime_{k,\eta}$ is a compactly perturbed coercive operator.  Introducing additional operators into the equations it is possible to formulate BIEs for this problem that do satisfy the conditions required by Theorem~\ref{thm:ceagen}, but the downside of this is that this approach requires additional computational effort.  For a survey of recent progress in this area we refer to \cite[\S2.11 and \S5.7]{ChGrLaSp:12}, see also \cite{BuHi:05b,BuSa:06,EnSt:07,EnSt:08}.

However, even in cases where it can be applied, the  classical theory does not tell us how $C$ and $N_0$ depend on $k$.  Moreover, suppose that we solve~(\ref{eqn:BIE}) using the Galerkin method~(\ref{eqn:gal_gen}) on a family of, for example, $N$-dimensional spaces $\cV_N$  of piecewise polynomial functions of fixed degree on a quasi-uniform sequence of meshes on $\Gamma$ with diameter $h \to 0$ (so that $N$ is proportional to $h^{1-d}$).  Then, for such standard piecewise polynomial approximation spaces, the best approximation error $\inf_{w_N \in \cV_{N}} \Vert v - w_N \Vert_{\cV}$ will be highly $k$-dependent.  For example, if $v$ is $p+1$ times continuously differentiable on each mesh interval then standard estimates for piecewise polynomial approximation of degree $p$ suggest that we might be able to bound the difference between $v$ and its best polynomial approximation $p_N$ on each interval by
\begin{equation}
  \| v - p_N \|_{\cV} \ \leq \ C h^{p+1} \|v^{(p+1)}\|_{\cV},
  \label{eqn:best_standard}
\end{equation}
for some constant $C>0$ (which may depend on $p$).  But since $v$ solving~(\ref{eqn:BIE}) represents either $\partial_\nu^+ u$ or $\gamma_+ u$, where $u$ solves~(\ref{eqn:HE}), it follows that $v$ will be oscillatory, with the $j$th derivative of $v$ being, in general, of order $k^j$.  In this case, (\ref{eqn:best_standard}) becomes
\[ \| v - p_N \|_{\cV} \ \leq \ C (hk)^{p+1}, \]
with this estimate carrying over naturally to the right-hand side of~(\ref{eqn:quasiopt}), i.e.
\[ \inf_{w_N \in \cV_{N}} \Vert v - w_N \Vert_{\cV} \ \leq \ C (hk)^{p+1}. \]
Thus  $h$  needs to decrease with  $\mathcal{O}(k^{-1})$, and possibly faster, just to keep the error bounded as $k\to\infty$.
Hence,  standard (piecewise) polynomial BEMs applied directly to approximate the oscillatory solution of scattering problems will have complexity of at least $\mathcal{O}(h^{1-d}) = \mathcal{O}(k^{d-1})$ as $k\to\infty$ (see, e.g., \cite{GrLoMeSp14} for further details).

In the case that the integral operator $A$ in~(\ref{eqn:BIE}) is bounded and coercive, we can follow a slightly different approach.  In this case, the Lax-Milgram lemma guarantees that \eqref{eq:var} has exactly one solution $v\in \cV$ for every $w\in \cV$, with
\[ 
  \|v\|_{\cV}\leq \alpha^{-1} \|f\|_{\cV'},
\]
and existence of the Galerkin solution and quasi-optimality is guaranteed by C\'ea's lemma (see, e.g., \cite[Lemma~2.48]{ChGrLaSp:12}).

\begin{lem} \label{lem:cea} {\bf C\'ea's lemma.} Suppose that the bounded linear operator $A:\cV\to\cV^{\prime}$ is coercive (i.e.\ it satisfies~(\ref{eq:coercive}) for some $\alpha>0$), and moreover that, for some constant $C>0$, $|\langle Av,w \rangle|\leq C\|v\|_{\cV}\|w\|_{\cV}$, for all $v,w\in\cV$.  Then~\eqref{eq:varfin} has exactly one solution $v_N\in \cV_N$, which satisfies
\begin{equation} \label{eq:genquasi}
\|v-v_N\|_{\cV} \leq \frac{C}{\alpha} \, \inf_{w_N\in \cV_N} \, \|v-w_N\|_{\cV}.
\end{equation}
\end{lem}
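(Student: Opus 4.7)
The plan is to apply the standard two-step argument for coercive variational problems: first establish existence and uniqueness of the Galerkin solution $v_N$ by invoking Lax--Milgram on the finite-dimensional subspace, then deduce quasi-optimality from Galerkin orthogonality combined with coercivity and continuity.

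For the first step, I would observe that $\cV_N$, being finite-dimensional, is a closed subspace of $\cV$ and hence itself a Hilbert space under the induced inner product. The hypotheses transfer verbatim: the restriction of the sesquilinear form $(v,w)\mapsto \langle Av,w\rangle$ to $\cV_N\times\cV_N$ is bounded with constant $C$ and coercive with constant $\alpha$, and $w_N\mapsto \langle f,w_N\rangle$ is a bounded antilinear functional on $\cV_N$. The Lax--Milgram lemma then yields a unique $v_N\in\cV_N$ satisfying \eqref{eq:varfin}.

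For the quasi-optimality bound, the key observation is \emph{Galerkin orthogonality}: since \eqref{eq:var} holds in particular for every $w=w_N\in\cV_N\subset\cV$, subtracting \eqref{eq:varfin} gives $\langle A(v-v_N),w_N\rangle = 0$ for all $w_N\in\cV_N$. Now for arbitrary $w_N\in\cV_N$, write $v-v_N=(v-w_N)+(w_N-v_N)$ and apply coercivity to $v-v_N$:
\begin{equation*}
\alpha\|v-v_N\|_{\cV}^2 \le |\langle A(v-v_N),v-v_N\rangle| = |\langle A(v-v_N),v-w_N\rangle|,
\end{equation*}
where the term $\langle A(v-v_N),w_N-v_N\rangle$ vanishes by Galerkin orthogonality since $w_N-v_N\in\cV_N$. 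Applying the continuity bound on the right-hand side yields $\alpha\|v-v_N\|_{\cV}^2 \le C\|v-v_N\|_{\cV}\|v-w_N\|_{\cV}$. Dividing by $\|v-v_N\|_{\cV}$ (the case $v=v_N$ being trivial) and taking the infimum over $w_N\in\cV_N$ gives \eqref{eq:genquasi}.

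There is no real obstacle here; the only subtlety worth flagging is that coercivity is assumed in the form $|\langle Bv,v\rangle|\ge\alpha\|v\|^2$ (with absolute value, accommodating complex-valued forms that are coercive only after multiplication by a unimodular constant), so the bound $\alpha\|v-v_N\|^2\le|\langle A(v-v_N),v-v_N\rangle|$ is the correct starting inequality rather than a real-part version. Everything else is a direct consequence of Lax--Milgram and the orthogonality relation built into the Galerkin formulation.
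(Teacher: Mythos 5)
Your proof is correct and is the standard argument (Lax--Milgram on the finite-dimensional subspace for existence and uniqueness, then Galerkin orthogonality combined with coercivity and continuity for quasi-optimality); the paper itself gives no proof, citing \cite[Lemma~2.48]{ChGrLaSp:12}, and the argument there is essentially the one you give. Your remark about the absolute-value form of coercivity being the right starting point for complex-valued sesquilinear forms is apt and matches the setting of \eqref{eq:coercive}.
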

This theorem is relevant to the star-combined formulation~(\ref{eq:BIE_sss4}) for sound-soft scattering by star-shaped obstacles, to the BIE~(\ref{eq:BIE_sc2}) for the screen scattering problem, and also to the standard combined potential formulation~(\ref{eq:BIE_sss3}) for a certain range of geometries (see~\cite{SpKaSm:14} for details).  It is sometimes presumed, since the standard domain-based variational formulations of BVPs for the Helmholtz equation are  standard examples of indefinite problems where coercivity does not hold, at least for sufficiently large $k$, that the same should hold true for weak formulations arising via integral equation formulations. However recent results, discussed in \cite[\S5]{ChGrLaSp:12} and see also \cite{SpChGrSm:11, BS, BePhSp:14, SpKaSm:14,ScreenCoerc}), show that coercivity holds for these BIEs for a range of geometries, with $\alpha$ bounded away from zero for all sufficiently large~$k$, moreover with the $k$-dependence of $\alpha$ and $C$ in~(\ref{eq:genquasi}) explicitly known in many cases.

The advantage of using this version of C\'ea's lemma, as opposed to that stated in Theorem~\ref{thm:ceagen}, is that, if the $k$-dependence of the continuity and coercivity constants are known, then everything in~(\ref{eq:genquasi}) is $k$-explicit, as opposed to the unknown $k$-dependence of the constants $C$ and $N_0$ in~(\ref{eqn:quasiopt}).  In either case though, the question of how the best approximation estimate $\inf_{w_N\in \cV_N} \, \|v-w_N\|$ depends on $k$ is crucial.

In the next section we briefly mention recent progress on standard schemes, with piecewise polynomial approximation spaces, where the best approximation estimate may grow with~$k$ but other ideas can be applied to improving efficiency and accuracy even at large frequencies.  These approaches have the advantage that little need be known a-priori about the behaviour of the solution.  On the other hand, if one can understand the high frequency asymptotic behaviour of the solution then one can design one's approximation space accordingly in order to efficiently represent the oscillatory solution at high frequencies.  This is the idea behind the hybrid numerical-asymptotic approach, which we describe more fully in~\S\ref{sec:HNA}.

\subsection{BEM with standard (piecewise polynomial) approximation spaces}
\label{sec:standard}

As alluded to above, boundary element methods with standard piecewise polynomial approximation spaces suffer from the restriction that the number of degrees of freedom required to achieve a prescribed level of accuracy will grow rapidly (at least with $O(k)$ for 2D and $O(k^2)$ for 3D problems) as $k$ increases. (Recent progress in the analysis of the standard BEM for the Helmholtz equation, teasing out explicitly the dependence of this error on the wavenumber, is reported in \cite{LoMe:10,GrLoMeSp14}.) As boundary element methods lead to linear systems with dense matrices, this can render the computational cost (both in terms of speed and memory) of standard linear solvers impractical when $k$ is very large.

To ease this problem, much effort has been put into developing preconditioners (see, e.g., \cite{HaCh:03,BrLi:13,HiJeUr:13}), efficient iterative solvers (see, e.g., \cite{AmMa:98, ChNe:00, ErGa:12}, fast multipole methods (see, e.g., \cite{Da:02, DoJiCh:03, DaHa:04, ChCrGiGrEtHuRoYaZh:06, OfStWe:06}), and matrix compression techniques (see, e.g., \cite{BaHa:08, Be:08}) for Helmholtz and related problems.

These advances have enabled the solution of larger and larger problems, but a common argument put forward against the widespread usage of boundary element methods is the added investment required by the user to initiate code development incorporating such features, certainly in comparison to more intuitive and less computationally complicated finite difference and particularly finite element methods.  Moreover, whereas there exist many widely available computational resources for efficient development of finite element software, until recently comparable resources have been somewhat lacking for boundary element computations.  To redress this balance, there has been much recent activity on the development of software packages specifically for boundary element methods.  The key aim of much of this software is to make many of the recent advances outlined above accessible to the more casual user.

We mention in particular the package BEM++, which has recently been developed for the solution of a range of 3D linear elliptic PDEs, including the Helmholtz equation (see~\cite{SmArBePhSc:13} for details).  BEM++ utilises $hp$-Galerkin schemes, with low order approximation spaces (piecewise constant or piecewise linear continuous spaces), or else allows higher order polynomials on flat triangles.  Moreover, BEM++ implements many of the developments listed above within a single framework.  For further details we refer to~\cite{SmArBePhSc:13}.  Many other codes have also been developed in recent years, for example BETL~\cite{HiKi:12} and HyENA~\cite{MeMeUrRa:10}. 
For more details, we refer to the review of this topic in~\cite{SmArBePhSc:13}.

\subsection{Hybrid numerical-asymptotic BEMs for high frequency problems}
\label{sec:HNA}

In this section we describe recent progress in the design, analysis and implementation of hybrid numerical-asymptotic (HNA) BEMs for BVPs for the Helmholtz equation that model time harmonic acoustic wave scattering.  As alluded to above, the problem~(\ref{eqn:HE}) has solutions that oscillate in space with wavelength $\lambda=2\pi/k$ (e.g.\ the plane waves $u(\bx)=\exp(\ri k \bx.\hat{\ba})$, where $\hat{\ba}\in\R^d$ is a unit vector, are solutions).  Since the number of oscillations of linear combinations of such solutions will in general grow linearly with $k$ (in each direction), the number of degrees of freedom required to represent this oscillatory solution by conventional (piecewise polynomial) boundary elements must also grow with order $k^{d-1}$.  This lack of robustness with respect to increasing values of $k$ (which puts many problems of practical interest beyond the reach of standard algorithms) is the motivation behind the development of HNA algorithms.

The key idea of the HNA approach is to exploit, within the design of the numerical method, detailed information about the oscillations of the solution, based on advances in the understanding of the high frequency behaviour of solutions to the Helmholtz equation.  Known highly oscillatory components of $v$ solving~(\ref{eqn:BIE}) (with, for example,  $v=\partial_\nu^+u$ or $v=\gamma_+u$, where $u$ solves~(\ref{eqn:HE})) are treated exactly in the
algorithm, leaving only more slowly-varying components  to be  approximated by piecewise polynomials, i.e.\ we combine conventional piecewise polynomial approximations with high-frequency asymptotics to build basis functions suitable for representing the oscillatory solutions.  This yields methods which require very much fewer degrees of freedom as $k\to\infty$.

More specifically, we approximate the ($k$-dependent) solution $v$ using (in general) an ansatz of the form:
\begin{equation} \label{eqn:ansatz}
  v(\bx) \ \approx\  V_{0}(\bx,k) \ + \ \sum_{m=1}^M  V_m(\bx,k) \, \exp(\ri k \psi_m(\bx)), \quad \bx\in\Gamma \ .
\end{equation}
In this representation, $V_0$ is a known (generally oscillatory)
function, the phases $\psi_m$ are chosen {\em a priori}, and the amplitudes $V_{m}$, $m=1,\ldots,M$, are approximated numerically.
The idea (and in many cases this can be rigorously proven) is that,
if the phases are carefully chosen, then $V_m(\cdot,k)$, $m=1,\ldots,M$, will be much less oscillatory than $v$
and so can be better approximated by piecewise polynomials than $v$ itself.

The crucial first step then lies in choosing $V_0$ and $\psi_m$, $m=1,\ldots,M$, appropriately, and the exact way in which this is done depends on the geometry and boundary conditions of the problem under consideration.  Section~3 of the recent review article~\cite{ChGrLaSp:12} provides a historical survey before explaining the key ideas for a number of examples, specifically scattering by smooth convex obstacles (see also~\cite{BGMR04,DGS07,HuVa:07a,GH11}), an impedance half-plane (see also~\cite{ChLaRi:04,LaCh:06}), sound-soft convex polygons (see also~\cite{Convex,HeLaMe:11}), convex curvilinear polygons (see also~\cite{LaMoCh:10}), convex impedance polygons (see also~\cite{CWLM}), nonconvex polygons (see also~\cite{NonConvex}) and multiple scattering configurations (see also~\cite{GBR05,ER09,AnBoEcRe:10}).

Here, we describe some more recent advances, overlapping only in a fairly minor way with the examples described in~\cite{ChGrLaSp:12}.  We begin by illustrating the HNA scheme in the context of a problem of scattering by a single sound-soft screen (equivalently, scattering by an aperture in a single sound-hard screen, see~\cite{HeLaCh:13} for details), then describing how this idea extends to scattering by sound-soft convex polygons, outlining the added difficulties that arise in the case that the obstacle is nonconvex (presenting sharper results in terms of $k$-dependence than those outlined in~\cite{ChGrLaSp:12}), and finally considering scattering by penetrable polygons (the transmission problem).  We demonstrate that HNA methods have the potential to solve scattering problems accurately in a computation time that is (almost) independent of frequency.

\subsubsection{Scattering by screens} \label{sec:screens}
To get the main ideas across, we first describe an HNA BEM for a simple 2D geometry, scattering by a single planar sound-soft screen.  This problem, indeed the more general problem of scattering by an arbitrary collinear array of such screens, has been treated by HNA BEM methods with a complete numerical analysis in~\cite{HeLaCh:13}.

To be precise then, we consider the 2D problem of scattering of the time harmonic incident plane wave~(\ref{eqn:plane}) 
by a sound soft screen
\[ \Gamma := \{(x_1,0)\in\R^2\,:\, 0<x_1<L\}, \]
where $L$ is the length of the screen and $\bx=(x_1,x_2)\in\R^2$, i.e.\ we solve the BVP~(\ref{prob:scsp}) with $d=2$.  An example of our scattering configuration, for $k=5$ and for $k=20$, is shown in Figure~\ref{fig1}.  The increased oscillations for $k=20$ can be clearly seen.
\begin{figure}[htbp]
\begin{minipage}{0.5\linewidth}
\centering
\includegraphics[scale=0.33]{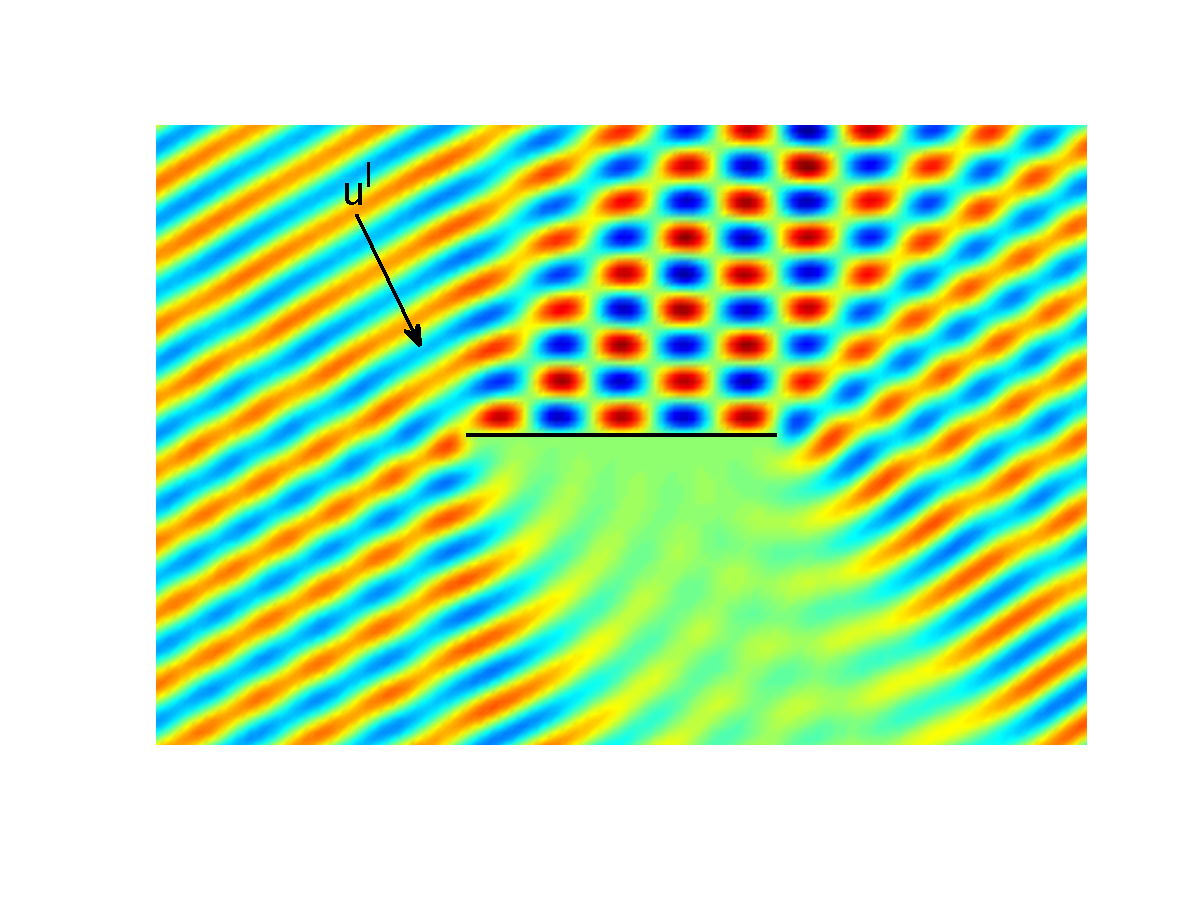}
\end{minipage}%
\begin{minipage}{0.5\linewidth}
\centering
\includegraphics[scale=0.33]{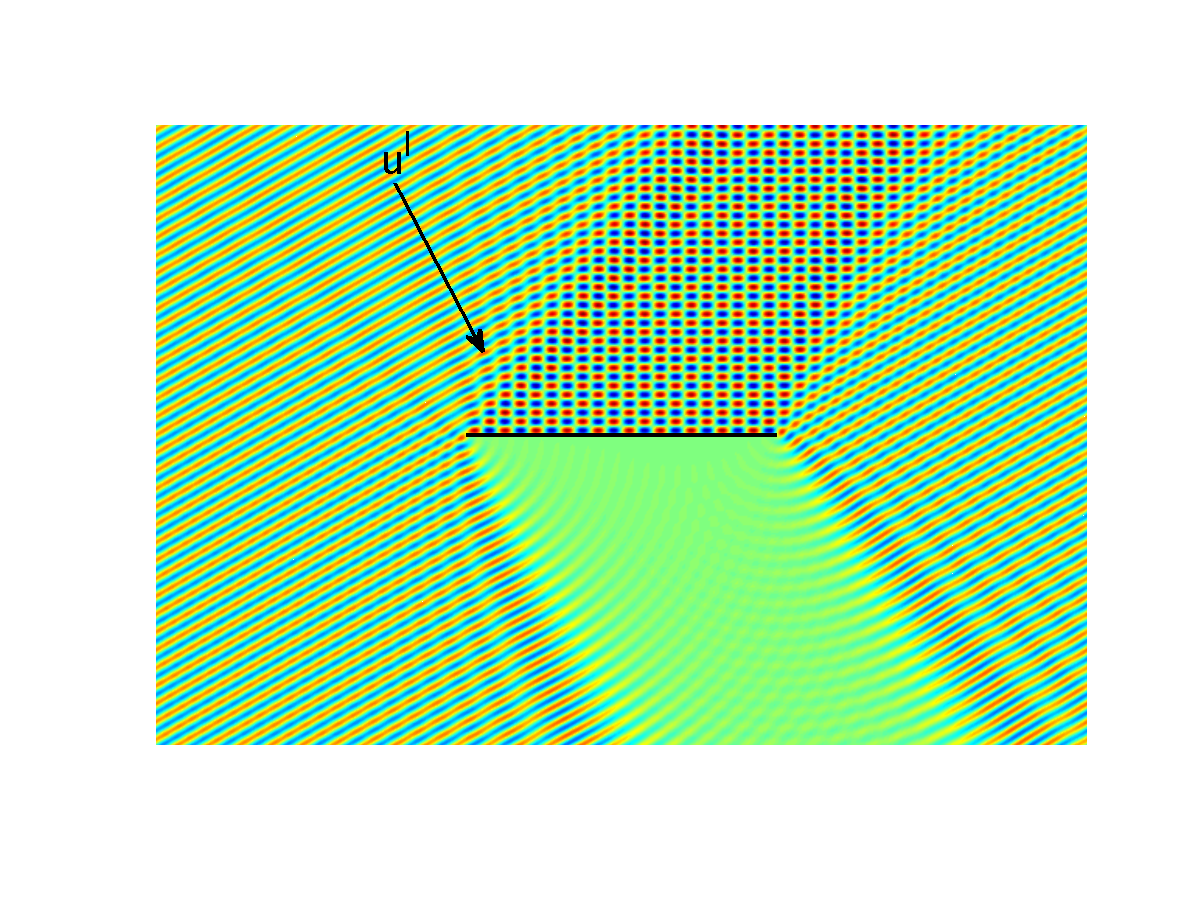}
\end{minipage}
\caption{Total field, scattering by a screen, $k=5$ (left) and $k=20$ (right)}
\label{fig1}
\end{figure}
The solution of this BVP satisfies~(\ref{eq:grt_screen_scat}), and hence our problem reduces to solving the BIE~(\ref{eq:BIE_sc2}).

The HNA method for solving~(\ref{eq:BIE_sc2}) uses an approximation space that is specially adapted to the high frequency asymptotic behaviour of the solution $[\partial_\nu u]$ on $\Gamma$, which we now consider.  Representing a point $\bx\in\Gamma$ parametrically by $\bx(s):=(s,0)$, where $s\in(0,L)$, the following theorem is proved in~\cite{HeLaCh:13} (this is derived directly from~(\ref{prob:scsp}) using an elementary representation for the solution in the half-plane above the screen in terms of a Dirichlet half-plane Green's function - for details see~\cite{HeLaCh:13} and cf.~\cite[Theorem~3.2 and Corollary~3.4]{Convex} and \cite[\S3]{HeLaMe:11}):
\begin{thm}
  \label{thm:31130812}  Suppose that $k\geq k_0>0$.  Then
  \begin{align}
    \left[\partial_\nu u\right]\left(\bx(s)\right)&=\Psi\left(\bx(s)\right)+v^+(s)\re^{\ri ks}+v^-\left(L-s\right)\re^{-\ri ks}, \qquad s\in\left(0,L\right), \label{eqn:27130812}
  \end{align}
  where $\Psi:=2\partial u^I/\partial\nu$, and the functions $v^\pm(s)$ are analytic in the right half-plane $\real{s}>0$, where they satisfy the bound
  \begin{align*}
    \left|v^\pm(s)\right|&\leq C_1 \mathcal{M} k\left|ks\right|^{-\frac{1}{2}}, 
  \end{align*}
  where
  \begin{align*}
    \mathcal{M}&:=\sup_{\bx\in D}\left|u(\bx)\right| \leq C(1+k),
  \end{align*}
  and the constants $C, C_1>0$ depend only on $k_0$ and $L$.
\end{thm}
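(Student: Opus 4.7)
The plan is to exploit the symmetry of the screen, subtract off the physical-optics term $\Psi = 2\partial u^I/\partial\nu$ (which is exact for an infinite planar screen), and analyse the residual edge-diffracted correction via an explicit integral representation together with the far-field asymptotic of $H_0^{(1)}$.

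First I would decompose $u = u_s + u_a$ into parts symmetric and antisymmetric under $x_2 \mapsto -x_2$. The incident plane wave splits analogously as $u^I = u^I_s + u^I_a$, and both the screen and the Dirichlet condition $u = 0$ on $\Gamma$ preserve this parity, so each part solves a well-posed subproblem. Because $u^I_a$ vanishes identically on $\{x_2=0\}$, the antisymmetric scattered field is zero and $u_a = u^I_a$, whence $[\partial_\nu u_a]=0$ and $[\partial_\nu u] = 2\partial_{x_2} u_s|_{x_2=0^+}$ carries all the diffraction information. The symmetric part $u_s$ solves in the upper half-plane a mixed Helmholtz problem with $u_s = 0$ on $\Gamma$, $\partial_{x_2} u_s = 0$ on $\{x_2=0\}\setminus\overline\Gamma$ (by even symmetry), and the Sommerfeld condition for $u_s - u^I_s$. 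For the limit of an \emph{infinite} sound-soft screen, the method of images gives $u(\bx) = u^I(\bx) - u^I(\bar{\bx})$ in the upper half-plane with $\bar{\bx} := (x_1,-x_2)$, whence $[\partial_\nu u]_\infty = \Psi$. The claim thus reduces to showing that $\phi := [\partial_\nu u] - \Psi$ admits the decomposition $\phi(\bx(s)) = v^+(s)\re^{\ri ks} + v^-(L-s)\re^{-\ri ks}$ with the stated properties of $v^\pm$; physically, $\phi$ is the field diffracted by the two corners of the finite screen.

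To obtain an explicit formula for $\phi$ I would apply Green's identity in the upper half-plane with the Neumann half-plane Green's function $G_N(\bx,\by) := \Phi_k(\bx,\by) + \Phi_k(\bx,\by')$, $\by' := (y_1,-y_2)$, whose normal derivative vanishes on $\{y_2=0\}$ and thereby eliminates the unknown trace of $u_s$ off $\Gamma$. Taking the Dirichlet trace on $\Gamma$ and subtracting the analogous identity for the infinite screen yields an equation of the form $S_k\phi = f$, where $f$ is an explicit convolution of a Hankel kernel against $\Psi$ over the two half-lines $\{(y_1,0): y_1<0\}$ and $\{(y_1,0): y_1>L\}$ abutting $\Gamma$, with integrand bounded pointwise by a multiple of $\mathcal M := \sup_D|u|$. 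Near the left endpoint the substitution $y_1 = -t$ reduces the Hankel argument to $k(s+t)$, and the far-field asymptotic $H_0^{(1)}(z) \sim \sqrt{2/(\pi z)}\,\re^{\ri z}$ exhibits the phase $\re^{\ri k(s+t)}$; pulling out $\re^{\ri ks}$ leaves the amplitude $v^+(s)$ as a Laplace-type integral in $t$ that is absolutely convergent for $\real{s}>0$, giving analyticity of $v^+$ in the right half-plane, while the $(ks)^{-1/2}$ prefactor and the $\mathcal M$ bound deliver $|v^+(s)|\leq C_1 \mathcal M k|ks|^{-1/2}$. The symmetric calculation with $y_1 = L+t$ produces $v^-(L-s)\re^{-\ri ks}$. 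Transferring this structure from $f$ to $\phi$ itself is achieved by inverting $S_k$ on each half-line via a Wiener--Hopf factorisation, or equivalently by identifying each contribution with the explicit Sommerfeld half-plane diffraction solution.

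The main obstacle is the careful uniform analysis needed to make the above rigorous: the Hankel kernel has a logarithmic near-field singularity, the extraction of the $\re^{\pm\ri ks}$ phases must be performed so that contour deformation into $\{\real{s}>0\}$ preserves the sharp $|ks|^{-1/2}$ decay, and the constants must be tracked uniformly in $k \geq k_0$ so as to depend only on $k_0$ and $L$. The a priori estimate $\mathcal M \leq C(1+k)$ is a separate ingredient, obtained from standard bounds for the Helmholtz screen problem. This delicate bookkeeping is the substance of the analysis carried out in~\cite{HeLaCh:13}, building on~\cite{Convex,HeLaMe:11}.
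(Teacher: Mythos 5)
Your overall strategy (even/odd splitting in $x_2$, reduction to the upper half-plane, an image Green's function, Hankel asymptotics on the two flanking half-lines to extract the phases $\re^{\pm\ri ks}$ and the $|ks|^{-1/2}$ decay) matches the route the paper points to, but you choose the wrong image Green's function, and this derails the argument at the decisive step. The paper derives \eqref{eqn:27130812} from an elementary representation of the solution in the half-plane above the screen in terms of the \emph{Dirichlet} half-plane Green's function $G_D(\bx,\by)=\Phi_k(\bx,\by)-\Phi_k(\bx,\by')$, with $\by':=(y_1,-y_2)$. Because $G_D$ vanishes on $\{y_2=0\}$, Green's identity in the upper half-plane eliminates the unknown normal derivative altogether and yields a representation of the form
$u(\bx)=u^I(\bx)-u^I(\bx')+2\int_{\{y_2=0\}\setminus\overline\Gamma}\partial_{y_2}\Phi_k(\bx,\by)\,u(\by)\,\rd s(\by)$, $\bx'=(x_1,-x_2)$:
an \emph{explicit} formula whose only non-explicit ingredient is the trace of $u$ on the complement of the screen, a quantity bounded pointwise by $\mathcal M$. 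Differentiating in $x_2$ and letting $x_2\to 0^+$ on $\Gamma$ produces $\Psi$ from the image term, and the two half-line integrals yield $v^\pm$ directly as absolutely convergent Laplace-type integrals, from which the analyticity in $\real{s}>0$ and the bound $C_1\mathcal{M}k|ks|^{-1/2}$ follow by direct estimation of the (nonsingular, since $|x_1-y_1|\geq s$ there) kernel. Nothing has to be inverted.

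Your Neumann Green's function $G_N=\Phi_k(\bx,\by)+\Phi_k(\bx,\by')$ does the opposite: it eliminates the harmless bounded trace and retains $\partial_{y_2}u_s=\frac12[\partial_\nu u]$ on $\Gamma$ --- the very unknown the theorem describes --- so you merely recover the single-layer representation and the BIE $S_k[\partial_\nu u]=u^I|_\Gamma$, i.e.\ \eqref{eq:BIE_sc2}. You must then push the oscillatory structure of the right-hand side through $S_k^{-1}$, and the device you invoke for this, a Wiener--Hopf factorisation, applies to a semi-infinite screen, not a finite one: on $(0,L)$ one obtains a coupled pair of Wiener--Hopf equations with no closed-form solution, and proving that $S_k^{-1}$ maps $w^+(s)\re^{\ri ks}+w^-(L-s)\re^{-\ri ks}$ to a function of the same form, with amplitudes analytic in $\real{s}>0$ and satisfying the sharp $|ks|^{-1/2}$ bound uniformly in $k\geq k_0$, is essentially the content of the theorem and is left unproved in your sketch. (A smaller inconsistency: your $f$ is a convolution of a Hankel kernel against the known function $\Psi$, so bounding its integrand by $\mathcal M=\sup_D|u|$ is not the natural estimate; $\mathcal M$ enters the true proof because the explicit Dirichlet-Green's-function representation integrates the total field $u$ over the flanking half-lines.) Replacing $G_N$ by $G_D$ removes the need for the Wiener--Hopf step entirely and reduces the proof to the direct estimates you describe in your third paragraph.
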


The representation~(\ref{eqn:27130812}) is of the form~(\ref{eqn:ansatz}), with $V_0(\bx(s),k)=\Psi(\bx(s))$, $M=2$, $V_1(\bx(s),k)=v^+(s)$, $V_2(\bx(s),k)=v^-(s)$, $\psi_1(\bx(s))=s$, and $\psi_2(\bx(s))=-s$.  Using this representation we can design an appropriate approximation space $V_{N,k}$ to represent
\begin{align}
  \varphi(s)&:=\frac{1}{k}\left(\left[\partial_\nu u\right]\left(\bx(s)\right)-\Psi\left(\bx(s)\right)\right), \quad s\in(0,L).
  \label{eqn:varphi_screen}
\end{align}
Here $N$ denotes the total number of degrees of freedom in the method, and the subscript $k$ in $V_{N,k}$ serves as a reminder that the hybrid approximation space depends explicitly on the wavenumber $k$.  The function $\varphi$, which we seek to approximate, can be thought of as the scaled difference between $[\partial_\nu u]$ and its ``Physical Optics'' approximation $\Psi$ (which represents the direct contribution of the incident and reflected waves), with the $1/k$ scaling ensuring that $\varphi$ is nondimensional, and reflecting that $[\partial_\nu u]=O(k)$ as $k\to\infty$.  The second and third terms on the right hand side of~(\ref{eqn:27130812}) represent the diffracted rays emanating from the ends of the screen at $s=0$ and at $s=L$, respectively.  As alluded to earlier, instead of approximating $\varphi$ directly by conventional piecewise polynomials we instead use the representation (\ref{eqn:27130812}) with $v^+(s)$ and $v^-(L-s)$ replaced by piecewise polynomials supported on overlapping geometric meshes, graded towards the singularities at $s=0$ and $s=L$ respectively.  We proceed by describing our mesh, which is illustrated in Figure~\ref{fig:mesh}.
\begin{figure}[htbp]
  \begin{center}
    \includegraphics[height=3.5cm, width=9cm]{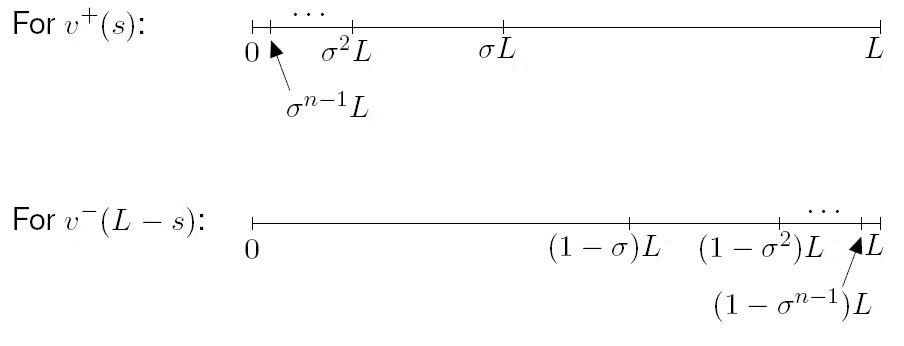}
  \end{center}
  \caption{Overlapping geometric meshes for approximation of $v^+$ and $v^-$}
  \label{fig:mesh}
\end{figure}

\begin{defn}
\label{def:51130812}
Given $L>0$ and an integer $n\geq1$ we denote by $\mathcal{G}_n(0,L)$ the geometric mesh on $[0,L]$ with $n$ layers, whose meshpoints $x_i$ are defined by
\begin{align*}
x_0:=0, \quad x_i:=\sigma^{n-i}L, \quad i=1,2,\ldots,n,
\end{align*}
where $0<\sigma<1$ is a grading parameter, with a smaller value providing a more severe mesh grading.  (We use $\sigma=0.15$ below.)
We further denote by $\mathcal{P}_{p,n}(0,L)$ the space of piecewise polynomials of order $p$ on the geometric mesh $\mathcal{G}_n(0,L)$, i.e.
\begin{align*}
\mathcal{P}_{p,n}(0,L)&:=\left\{\rho:[0,L]\to\C\,:\,\left.\rho\right|_{(x_{i-1},x_i)}\mbox{ is a polynomial of degree $\leq p$}\right\}.
\end{align*}
\end{defn}

We are now in a position to define the approximation space $V_{N,k}$.  Let
\begin{align*}
  V^+_{p,n}&:=\left\{\rho(s)\textrm{e}^{\ri ks}\,:\,\rho\in\mathcal{P}_{p,n}(0,L)\right\}, \\
  V^-_{p,n}&:=\left\{\rho\left(L-s\right)\textrm{e}^{-\ri ks}\,:\,\rho\in\mathcal{P}_{p,n}\left(0,L\right)\right\},
\end{align*}
with which we approximate, respectively, the terms $v^+(s)\re^{\ri ks}$ and $v^-\left(L-s\right)\re^{-\ri ks}$ in the representation~(\ref{eqn:27130812}).  We choose values for $p$ and $n$ and then define
\begin{equation}
  V_{N,k}:=\mbox{span}\left\{V^+_{p,n}\cup V^-_{p,n}\right\},
  \label{eqn:VNk}
\end{equation}
which has a total number of degrees of freedom $N=2n(p+1)$.
The following best approximation result is shown in~\cite[Theorem~5.1]{HeLaCh:13}.
\begin{thm}
\label{thm:57170912}
Let $n$ and $p$ satisfy $n\geq cp$ for some constant $c>0$ and suppose that $k\geq k_0>0$.  Then, there exist constants $C,\tau>0$, dependent only on $k_0$, $L$, and $c$, such that
\begin{align*}
  \inf_{w_N\in V_{N,k}}\left\|\varphi-w_N\right\|_{\tilde{H}^{-\frac{1}{2}}\left(\Gamma\right)}&\leq C k \re^{-p\tau}.
\end{align*}
\end{thm}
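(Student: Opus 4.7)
My plan is to combine the decomposition of $\varphi$ provided by Theorem \ref{thm:31130812} with classical $hp$-approximation theory for analytic functions having endpoint singularities on geometrically graded meshes. First, using Theorem \ref{thm:31130812}, I would write $\varphi = \varphi^+ + \varphi^-$, where $\varphi^+(s) := k^{-1}v^+(s)\re^{\ri ks}$ and $\varphi^-(s) := k^{-1}v^-(L-s)\re^{-\ri ks}$. By the triangle inequality it suffices to bound $\inf_{w\in V^\pm_{p,n}} \|\varphi^\pm - w\|_{\tilde H^{-1/2}(\Gamma)}$ separately, and the change of variable $s \mapsto L - s$ makes the two problems symmetric, so I focus on the $+$ case. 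Since every $w\in V^+_{p,n}$ has the form $w(s)=\rho(s)\re^{\ri ks}$ with $\rho\in\mathcal P_{p,n}(0,L)$, the task reduces to constructing $\rho$ such that $\|(k^{-1}v^+ - \rho)\re^{\ri ks}\|_{\tilde H^{-1/2}(\Gamma)}$ is small. Multiplication by $\re^{\ri ks}$ is bounded on the relevant negative-order Sobolev spaces with at-worst-polynomial $k$-dependence of the operator norm, so the core task is polynomial approximation of $k^{-1}v^+$ on the graded mesh $\mathcal G_n(0,L)$.

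The main analytical step is an element-wise $hp$ estimate, leveraging that Theorem \ref{thm:31130812} gives analyticity of $v^+$ throughout the right half-plane $\real{s}>0$. On each subinterval $[x_{i-1}, x_i]$ with $i\geq 2$, the geometric grading ensures that the ratio of the distance from the interval to the singularity at $0$ to the element length is bounded below by a constant depending only on $\sigma$ (in fact equal to $\sigma/(1-\sigma)$). Hence $[x_{i-1},x_i]$ can be enclosed in a Bernstein ellipse of uniformly bounded eccentricity lying entirely in the right half-plane, and standard Chebyshev projection estimates then yield a local best-polynomial error (say in $L^2$ of the element) of the form $C \rho_0^{-p}\sup_E|v^+/k|$ for some $\rho_0>1$ depending only on $\sigma$; the pointwise bound in Theorem \ref{thm:31130812} controls $\sup_E|v^+/k|$ by $\mathcal{O}(k^{1/2} x_{i-1}^{-1/2})$. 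On the first element $[0, \sigma^{n-1}L]$ I would simply take $\rho \equiv 0$ and estimate the $\tilde H^{-1/2}$ norm of $k^{-1}v^+$ directly on that interval, exploiting that $s^{-1/2}$ does lie in $\tilde H^{-1/2}$ even though it fails to be in $L^2$; this gives a first-element contribution of order $k^{1/2}\sigma^{(n-1)/2}$.

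Summing the element-wise bounds, using $n\geq cp$ to convert the $\sigma^{n-i}$-weights and the factor of $n$ from the number of elements into exponential decay $\re^{-p\tau}$ (with $\tau$ controlled by $|\log\sigma|$ and $\log\rho_0$), and multiplying by the polynomial-in-$k$ multiplier norm of $\re^{\ri ks}$, produces the stated bound $Ck\re^{-p\tau}$. The main obstacle is the treatment of the first mesh element: because the singular exponent $1/2$ is precisely critical for $L^2$-integrability, one cannot simply prove an $L^2$ approximation bound and then invoke the continuous embedding $L^2(\Gamma)\hookrightarrow\tilde H^{-1/2}(\Gamma)$; instead one must work directly in the negative-order ``tilde'' space, which is in any case the natural functional setting for the jump $[\partial_\nu u]$ in the screen problem. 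A secondary technical subtlety is tracking the $k$-dependence of the multiplier norm of $\re^{\ri ks}$ on $\tilde H^{-1/2}(\Gamma)$ sharply enough to obtain the stated linear-in-$k$ pre-factor.
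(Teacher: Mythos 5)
Your proposal is correct and follows essentially the same route as the proof the paper relies on (Theorem~5.1 of \cite{HeLaCh:13}): split $\varphi$ into the two diffracted-wave terms via Theorem~\ref{thm:31130812}, use the analyticity of $v^\pm$ in the right half-plane together with the stated pointwise bounds to get exponential-in-$p$ Chebyshev/Bernstein-ellipse estimates on each element of the geometric mesh away from the endpoint, approximate by zero on the first element and control the $s^{-1/2}$ singularity in the weaker $\tilde H^{-1/2}$ (or an $L^q$, $q<2$) norm, and use $n\geq cp$ to make the first-element and summation contributions exponentially small. The only refinement worth noting is that the multiplier issue you flag at the end can be sidestepped by keeping the unimodular factor $\re^{\ri ks}$ attached to the error throughout, so it costs nothing in $L^2$ on the interior elements and is absorbed into the direct estimate on the first element.
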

Identifying $\Gamma$ with $(0,L)$, here $\tilde{H}^{-1/2}(\Gamma)=\tilde{H}^{-1/2}(0,L)\subset H^{-1/2}(\R)$, $\tilde{H}^{-1/2}(0,L)$ just the subspace of those $\psi\in H^{-1/2}(\R)$ that have support in $[0,L]$. And then $\|\cdot\|_{\tilde{H}^{-1/2}(\Gamma)}$ is just the standard norm on the Sobolev space $H^{-1/2}(\R)$ (see, e.g., \cite{McLean}).\footnote{We note that the bounds in~\cite{HeLaCh:13} are in fact in terms of a natural wavenumber dependent Sobolev norm $\|\cdot\|_{\tilde{H}_k^{-1/2}(\Gamma)}$, but it is an easy calculation, this feeding into the results we state here, that $\|\cdot\|_{\tilde{H}_k^{-1/2}(\Gamma)} \leq \max(1,k^{1/2})  \|\cdot\|_{\tilde{H}^{-1/2}(\Gamma)}$.}

Having designed an appropriate approximation space $V_{N,k}$, we use a Galerkin method to select an element so as to efficiently approximate $\varphi$.  That is, we seek $\varphi_N\in V_{N,k}$ 
such that
\begin{align}
\left\langle S_k\varphi_N,w_N\right\rangle_{\Gamma}=\frac{1}{k}\left\langle u^I-S_k\Psi,w_N\right\rangle_{\Gamma}, \quad \mbox{for all } w_N\in V_{N,k}, \label{eqn:gal130812}
\end{align}
where 
the duality pairings in~(\ref{eqn:gal130812}) are simply $L^2(\Gamma)$ inner products.
The following error estimate follows from~\cite[Theorem~6.1]{HeLaCh:13}.
\begin{thm}
\label{GalerkinThm}
If the assumptions of Theorem~\ref{thm:57170912} hold, then there exist constants $C,\tau>0$, dependent only on $k_0$, $L$,  and $c$, such that
\[ \left\|\varphi-\varphi_N\right\|_{\tilde{H}^{-1/2}(\Gamma)}\leq C k^{3/2} \re^{-p\tau}. \]
\end{thm}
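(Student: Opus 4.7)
The strategy is to recognise (\ref{eqn:gal130812}) as the Galerkin projection of a coercive variational problem, apply C\'ea's lemma, and then invoke the best-approximation estimate of Theorem~\ref{thm:57170912}. Substituting the decomposition (\ref{eqn:27130812}) (which, together with (\ref{eqn:varphi_screen}), reads $[\partial_\nu u] = \Psi + k\varphi$) into the screen-scattering BIE (\ref{eq:BIE_sc2}), using $h=-u^I|_\Gamma$ and dividing by $k$, gives
\[
S_k \varphi = k^{-1}(u^I - S_k\Psi)|_\Gamma \quad \text{in } \tilde H^{-1/2}(\Gamma),
\]
so that (\ref{eqn:gal130812}) is exactly the Galerkin projection of this identity onto $V_{N,k}$.

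The essential structural ingredient, as noted in the discussion around (\ref{eq:screenie}), is that on a planar screen the single-layer operator $S_k:\tilde H^{-1/2}(\Gamma)\to H^{1/2}(\Gamma)$ is coercive, with continuity and coercivity constants whose $k$-dependence is explicitly known (via Fourier diagonalisation, see \cite{Co:04,ScreenCoerc}). In the wavenumber-weighted norm $\|\cdot\|_{\tilde H_k^{-1/2}(\Gamma)}$ used in \cite{HeLaCh:13}, the ratio of these constants is in fact bounded independently of $k$, so Lemma~\ref{lem:cea} gives both existence and uniqueness of the Galerkin solution $\varphi_N\in V_{N,k}$ and a $k$-uniform quasi-optimality bound
\[
\|\varphi - \varphi_N\|_{\tilde H_k^{-1/2}(\Gamma)} \leq C_1 \inf_{w_N \in V_{N,k}} \|\varphi - w_N\|_{\tilde H_k^{-1/2}(\Gamma)}.
\]

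Applying the norm equivalence $\|\cdot\|_{\tilde H_k^{-1/2}(\Gamma)} \leq \max(1,k^{1/2})\|\cdot\|_{\tilde H^{-1/2}(\Gamma)}$ from the footnote on the right-hand side, and the reverse-direction bound $\|\cdot\|_{\tilde H^{-1/2}(\Gamma)}\leq \|\cdot\|_{\tilde H_k^{-1/2}(\Gamma)}$ on the left (which holds by construction, since the $k$-weighted norm is the stronger), and then substituting the best-approximation bound $Cke^{-p\tau}$ from Theorem~\ref{thm:57170912}, delivers $Ck^{3/2}e^{-p\tau}$ as required. The main obstacle, essentially quoted from \cite{Co:04,ScreenCoerc}, is the $k$-explicit coercivity of $S_k$ on the planar screen; once this is granted, the remainder of the argument is a routine combination of C\'ea's lemma, careful tracking of the wavenumber-dependent norm equivalence, and the approximation estimate already established in Theorem~\ref{thm:57170912}.
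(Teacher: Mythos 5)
Your overall route is the same one the paper takes: the paper proves this theorem simply by citing \cite[Theorem~6.1]{HeLaCh:13}, and the proof there is exactly the combination you describe --- coercivity of $S_k:\tilde{H}^{-1/2}(\Gamma)\to H^{1/2}(\Gamma)$ on the planar screen (quoted from \cite{Co:04,ScreenCoerc}), C\'ea's lemma applied to the Galerkin equations \eqref{eqn:gal130812}, the best approximation estimate of Theorem~\ref{thm:57170912}, and the conversion between the wavenumber-weighted and unweighted $\tilde{H}^{-1/2}$ norms. So the structure is right, and quoting the $k$-explicit coercivity as the key external ingredient is legitimate (the paper does the same).

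There is, however, one step that is stated backwards. You claim $\|\cdot\|_{\tilde{H}^{-1/2}(\Gamma)}\leq \|\cdot\|_{\tilde{H}_k^{-1/2}(\Gamma)}$ ``since the $k$-weighted norm is the stronger''. That is true for Sobolev spaces of \emph{positive} order, but for the negative-order space $\tilde{H}^{-1/2}$ the weighting $(k^2+\xi^2)^{-1/2}\leq(1+\xi^2)^{-1/2}$ (for $k\geq 1$) makes the $k$-weighted norm the \emph{weaker} one: the correct two-sided comparison is $\|\phi\|_{\tilde{H}_k^{-1/2}(\Gamma)}\leq\|\phi\|_{\tilde{H}^{-1/2}(\Gamma)}\leq \max(1,k^{1/2})\,\|\phi\|_{\tilde{H}_k^{-1/2}(\Gamma)}$. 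Consequently the factor $k^{1/2}$ must be paid on the \emph{left} of your chain, when converting the weighted-norm Galerkin error $\|\varphi-\varphi_N\|_{\tilde{H}_k^{-1/2}(\Gamma)}$ back to the unweighted norm, while the passage from the unweighted to the weighted norm of the best-approximation error on the right is free. You have placed the $k^{1/2}$ on the right and asserted the left conversion costs nothing; these two misstatements cancel, so your final exponent $k^{3/2}=k^{1/2}\cdot k$ is correct, but as written the first inequality in your chain is false. With the directions swapped as above the argument is sound and delivers the stated bound.
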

Note that, if $n$ is chosen proportional to $p$, precisely so that $c_2p\geq n\geq c_1p$, for some constants $c_2>c_1>0$, then the total number of degrees of freedom $N=O(p^2)$, and Theorem~\ref{GalerkinThm} implies that we can achieve any required accuracy with $N$ growing like $\log^2{k}$ as $k\to\infty$, rather than like $k$ as for a standard BEM.

We now present numerical results for the solution of (\ref{eqn:gal130812}).  The screen is of length $L=2\pi$ and hence our scatterer is~$k$ wavelengths long (recall that $\lambda=2\pi/k$).  The angle of incidence is $\pi/6$ measured anticlockwise from the downward vertical, as in Figure~\ref{fig1}.  In our experiments we choose $n=2(p+1)$, so that the total number of degrees of freedom is $N=2n(p+1)=4(p+1)^2$.  Since the total number of degrees of freedom depends only on $p$, we adjust our notation by defining $\psi_p(s):=\varphi_N(s)$, and we take the ``reference'' solution to be $\psi_7$.  In Figure~\ref{fig:screen_bdy} we plot $|\psi_7|\approx|\varphi|$, for $k=20$ and for $k=10240$.  The singularities at the edge of the screen can be clearly seen, as can the increased oscillations for larger~$k$ (the apparently shaded area is an artefact of the rapidly oscillating solution).
\begin{figure}[htbp]
\begin{minipage}{0.5\linewidth}
\centering
\includegraphics[scale=0.33]{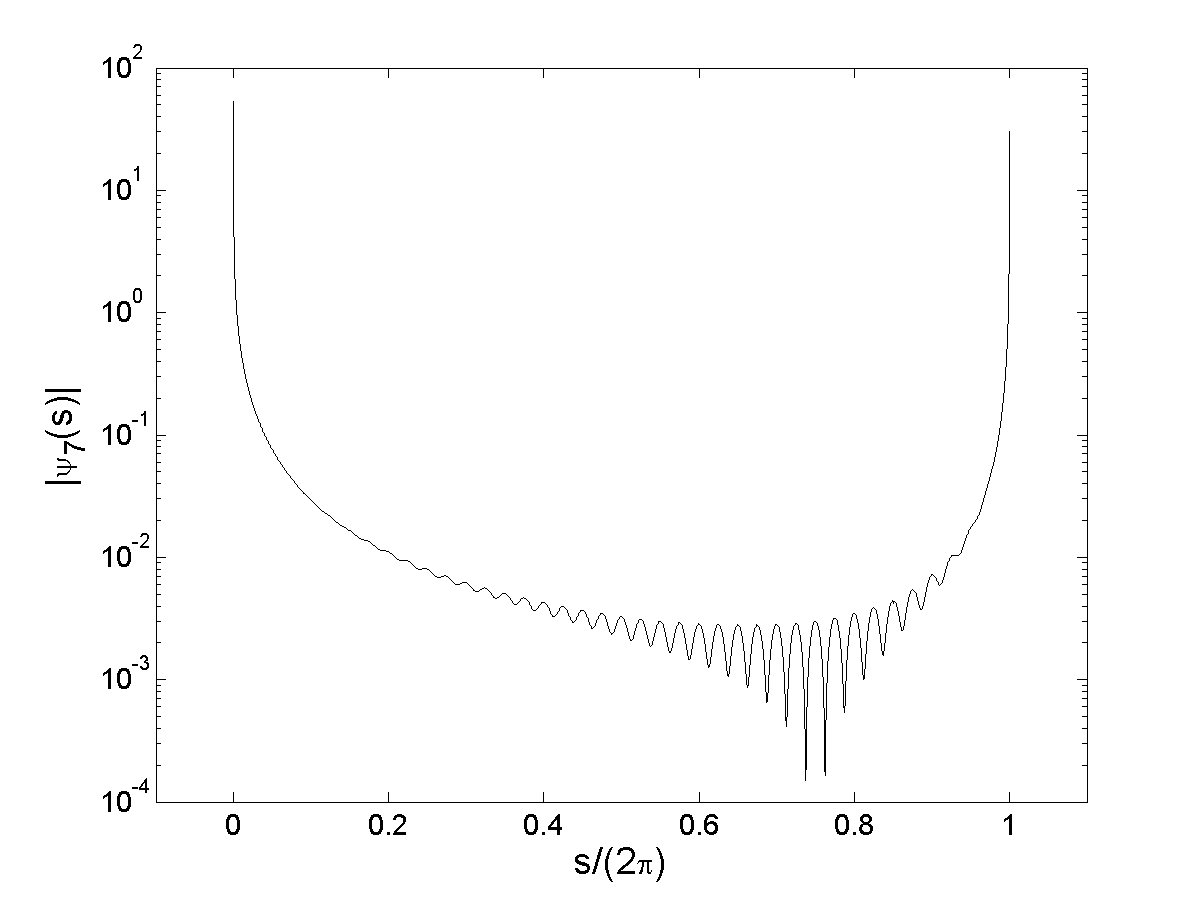}
\end{minipage}%
\begin{minipage}{0.5\linewidth}
\centering
\includegraphics[scale=0.33]{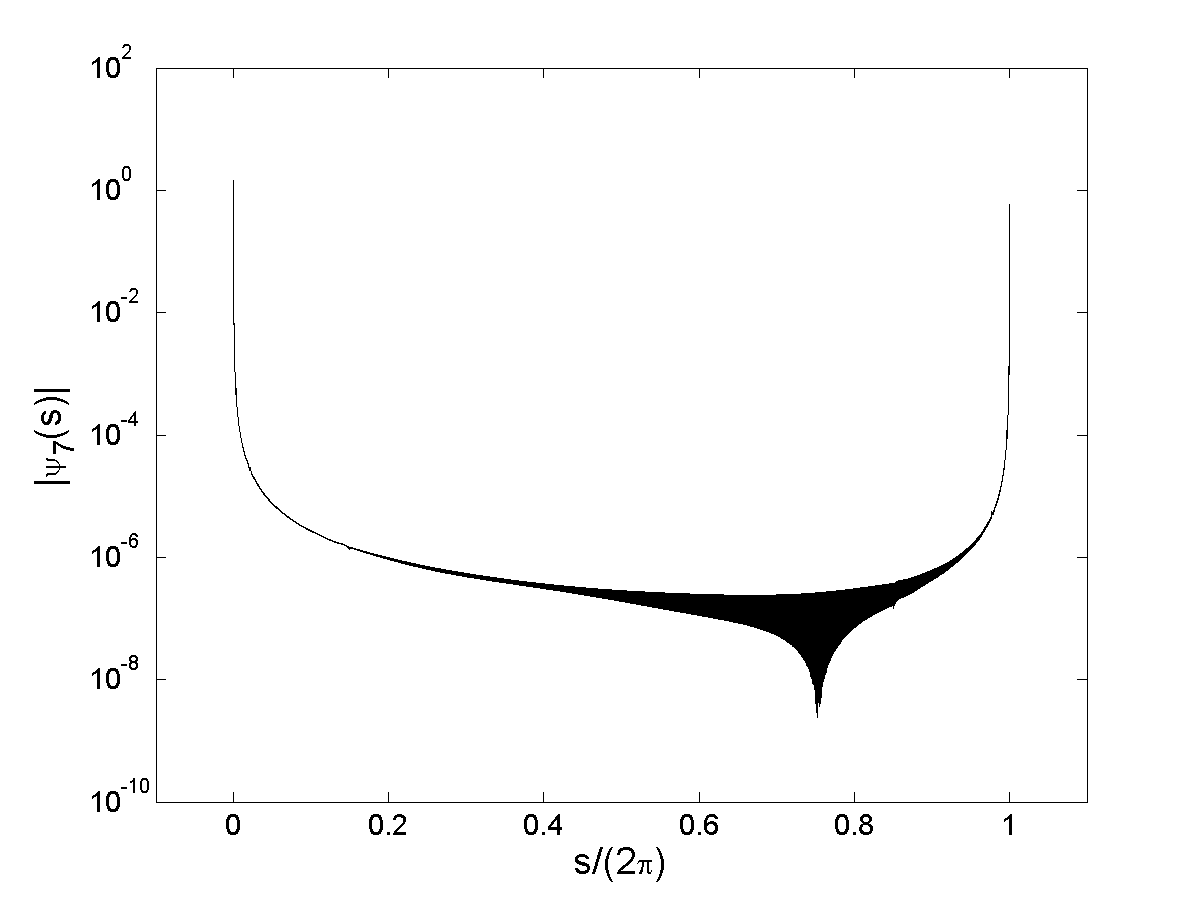}
\end{minipage}
\caption{The boundary solution $|\psi_7|\approx|\varphi|$, as given by~(\ref{eqn:varphi_screen}), for $k=20$ (left) and $k=10240$ (right), scattering by a screen}
\label{fig:screen_bdy}
\end{figure}

In Figure~\ref{fig2} we plot on a logarithmic scale the relative $L^1$ errors
\[ \frac{\left\|\psi_7-\psi_p\right\|_{L^1(\Gamma)}}{\left\|\psi_7+\Psi/k\right\|_{L^1(\Gamma)}}, \]
against $p$ for a range of $k$.  This is the relative error in $L^1(\Gamma)$ norm in our approximation, $\psi_p+\Psi/k$, to the dimensionless quantity $\frac{1}{k}[\partial_\nu u]$ (recall~(\ref{eqn:varphi_screen})).   (We calculate $L^1$ rather than $\tilde{H}^{-1/2}(\Gamma)$ norms for simplicity of computation.)  The $L^1$ norms are calculated using a high-order Gaussian quadrature routine on a mesh graded towards the endpoint singularities.
\begin{figure}[htbp]
 \centering
  \includegraphics[width=0.75\linewidth]{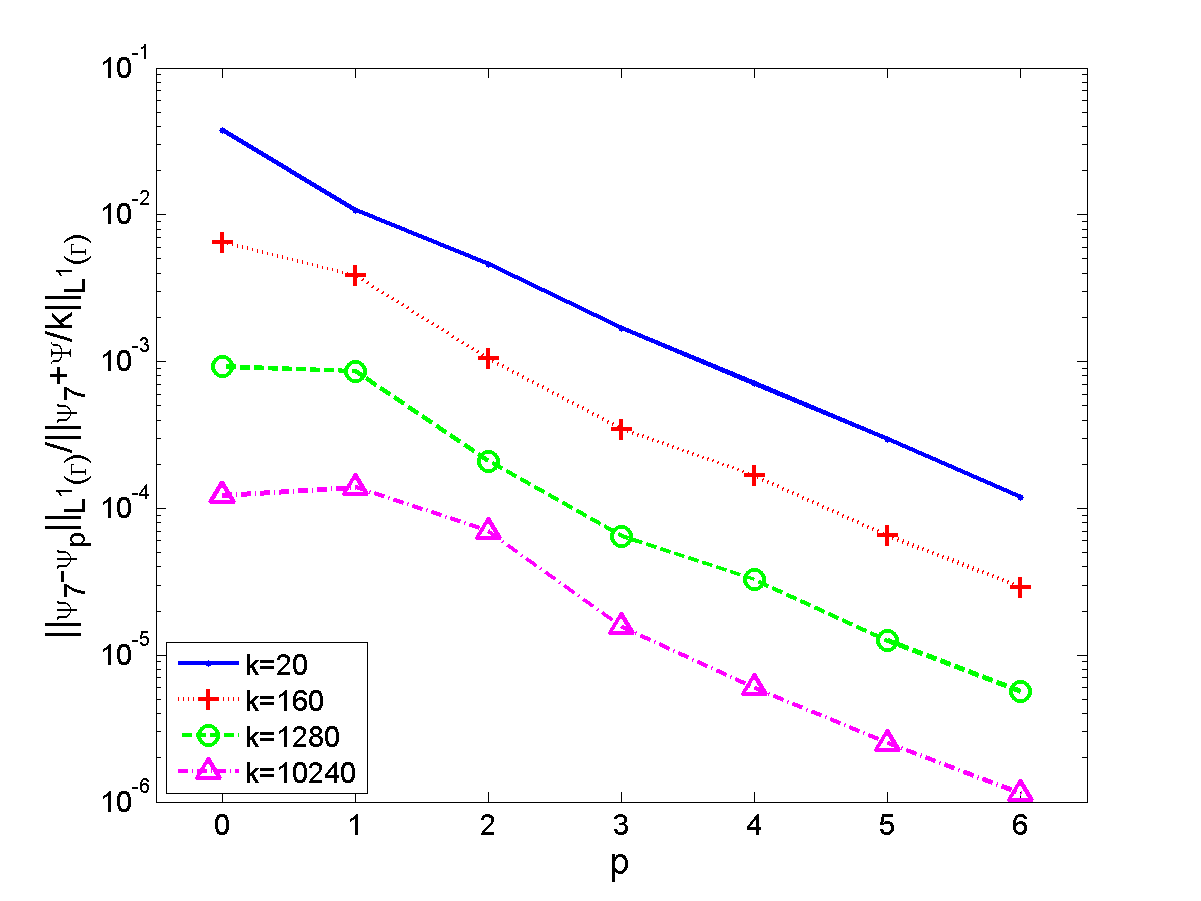}
 \caption{Relative errors in our approximation to $\frac{1}{k}[\partial_\nu u]$, scattering by a screen}
 \label{fig2}
\end{figure}
The linear plots demonstrate exponential decay as the polynomial degree, $p$, increases, as predicted in Theorem~\ref{GalerkinThm}.  More significantly, the relative error decreases as~$k$ increases for fixed~$p$.
This behaviour is better than might be expected from Theorem~\ref{GalerkinThm}, and in particular suggests that the error estimate in Theorem~\ref{GalerkinThm} is not sharp in its $k$-dependence.  Further results, including calculations of errors measured in an $\|\cdot\|_{\tilde{H}^{-1/2}(\Gamma)}$ norm, computations for an array of collinear screens, and computations of the solution in the domain and the far field pattern, can be found in~\cite{HeLaCh:13}.

\subsubsection{Scattering by convex polygons} The ideas outlined above for the screen problem can also be applied to the case of scattering by polygons.  First, we consider the 2D problem of scattering of the time harmonic incident plane wave~(\ref{eqn:plane}) by a sound-soft polygon with boundary $\Gamma$, i.e.\ problem~(\ref{prob:ssp}).
The solution $u$ then has the representation~(\ref{eq:grt_ss}), where $\partial_{\nu}^+ u$ satisfies~(\ref{eq:BIE_sss3}) and, if $\Omega_-$ is star-shaped,~(\ref{eq:BIE_sss4}).

We denote the number of sides of the polygon by $n_s$, and the corners (labelled in order counterclockwise) by $P_j$, $j=1,\ldots,n_s$.  We set $P_{n_s+1}:=P_1$, and then for $j=1,\ldots,n_s$ we denote the side of the polygon connecting the corners $P_j$ and $P_{j+1}$ by $\Gamma_j$, with length $L_j$.  We denote the exterior angle at the corner $P_j$ by $\omega_j$, $j=1,\ldots,n_s$.  We say that a side $\Gamma_j$ is illuminated by the incident plane wave given by~(\ref{eqn:plane}) if $\hat{\ba}\cdot\nu<0$ on $\Gamma_j$, and is in shadow if $\hat{\ba}\cdot\nu\geq 0$ on $\Gamma_j$.

As for the screen problem, the HNA method for solving~(\ref{eq:BIE_sss3}) or~(\ref{eq:BIE_sss4}) uses an approximation space that is specially adapted to the high frequency asymptotic behaviour of $\partial_{\nu}^+ u$ on $\Gamma$.  The nature of this behaviour is highly dependent on the geometry of the polygon: in particular, we consider convex and nonconvex polygons separately, considering first the case that the polygon is convex, and treating the nonconvex case in the next subsection.

Denoting the distance of $\bx\in\Gamma_j$ from $P_j$ by $s$, the following theorem follows from~\cite[Theorem~3.2, Theorem~4.3]{HeLaMe:11}.
\begin{thm}
  \label{thm:convex_reg}  Let $k\geq k_0>0$.  Then on any side $\Gamma_j$
  \begin{equation}
    \partial_{\nu}^+ u\left(\bx(s)\right) = \Psi\left(\bx(s)\right)+v_j^+(s)\re^{\ri ks}+v_j^-\left(L_j-s\right)\re^{-\ri ks}, \qquad \bx(s)\in\Gamma_j,
    \label{eqn:convex}
  \end{equation}
  where
  \[ \Psi:=\left\{\begin{array}{ll} 2\partial u^I/\partial\nu & \mbox{if }\Gamma_j\mbox{ is illuminated}, \\
                                                            0 & \mbox{if }\Gamma_j\mbox{ is in shadow}
           \end{array}\right.
  \]
  and the functions $v_j^\pm(s)$ are analytic in the right half-plane $\real{s}>0$, with
  \begin{equation}
    |v_j^{\pm}(s)|\leq C \left\{\begin{array}{ll} k^{3/2}\log^{1/2}(2+k)|ks|^{-\delta_j^{\pm}}, & 0<|s|\leq 1/k,\\
                                   k^{3/2}\log^{1/2}(2+k)|ks|^{-1/2}, & |s|> 1/k, \end{array}\right.
    \label{eqn:vjpmbound}
  \end{equation}
  where $\delta_j^{\pm}\in(0,1/2)$ are given by $\delta_j^+:=1-\pi/\omega_j$ and $\delta_j^-:=1-\pi/\omega_{j+1}$, and
  the constant $C>0$ depends only on $k_0$ and $\Gamma$.
\end{thm}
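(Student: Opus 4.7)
My plan is to decompose $\partial_\nu^+ u$ on each side $\Gamma_j$ as the sum of a known physical-optics contribution $\Psi$ plus a diffracted remainder, to identify the remainder as the sum of two waves launched from the adjacent corners $P_j,\,P_{j+1}$, and to establish analyticity and pointwise bounds for the associated amplitudes $v_j^\pm$.

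Step 1 (physical optics subtraction). The term $\Psi$ is the standard leading-order approximation in which each illuminated point of $\Gamma$ is treated as part of an infinite flat mirror: an image-source argument gives $\Psi = 2\partial u^I/\partial\nu$ on illuminated sides, while shadow points receive no direct ray of $u^I$ and $\Psi = 0$ there. The object to analyse is the diffracted remainder $r_j := \partial_\nu^+ u - \Psi$ on $\Gamma_j$.

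Step 2 (local wedge problems). I would exploit the local geometry: at the corner $P_j$ the polygon locally looks like an infinite sound-soft wedge of exterior angle $\omega_j$ (formed by $\Gamma_{j-1}$ and $\Gamma_j$ extended), and analogously at $P_{j+1}$ with angle $\omega_{j+1}$. The Sommerfeld--Malyuzhinets representation gives the exact solution of plane-wave scattering by such a wedge as a contour integral, which separates cleanly into a physical-optics part and a diffracted cylindrical wave emanating from the vertex. Restricting the latter to $\Gamma_j$ and expanding via steepest descent in $1/(ks)$ yields precisely a factor $\re^{\ri ks}$ (from the corner $P_j$) times an amplitude that is analytic in $\real{s}>0$ and obeys the stated bound with exponent $\delta_j^+=1-\pi/\omega_j$; this exponent is the classical Kondratiev/Meixner singularity of the Dirichlet Laplacian at a $\omega_j$-wedge, and appears as a residue contribution in the Sommerfeld integral near the corner. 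The corresponding analysis at $P_{j+1}$ in the variable $L_j-s$ yields the $\re^{-\ri ks}$ summand with exponent $\delta_j^-=1-\pi/\omega_{j+1}$.

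Step 3 (globalisation and bounds). The wedge solutions do not solve the polygon scattering problem, so I would write $u$ as a sum of suitably truncated wedge solutions at $P_j$ and $P_{j+1}$ plus a residual field $v$ on $\Omega_+$, and obtain $k$-explicit bounds on $v$ and its traces from continuity/coercivity estimates for the exterior BVP (available, given convexity, via the star-combined formulation~\eqref{eq:BIE_sss4} under~\eqref{eq:xdotn}). Substituting these bounds into the decomposition identifies $v_j^\pm$ as ``wedge amplitude plus residual correction'', produces the prefactor $k^{3/2}\log^{1/2}(2+k)$, and gives the $|ks|^{-1/2}$ decay for $|s|>1/k$ as the leading stationary-phase term in the wedge expansion. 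The main obstacle is sharp $k$-dependence: one needs $k$-explicit $L^\infty$-type bounds on $u$ in $\Omega_+$ (a nontrivial Helmholtz regularity result for polygons), and one must patch the local wedge solutions to the residual without losing powers of $k$; a secondary difficulty is extending the bound from the real segment $[0,L_j]$ into the full right half-plane $\real{s}>0$, which requires analytically continuing the Sommerfeld--Malyuzhinets integrals in the distance-from-corner variable and then invoking a Phragm\'en--Lindel\"of argument to control $v_j^\pm$ for complex $s$.
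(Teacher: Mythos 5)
Your overall picture (a physical-optics term plus two corner-diffracted waves with Meixner exponents $\delta_j^+=1-\pi/\omega_j$, $\delta_j^-=1-\pi/\omega_{j+1}$) is the right one, but the route you propose is not the one behind this theorem, and as written it has two genuine gaps, both concentrated in Steps 2--3. First, the decomposition ``$u=$ truncated wedge solutions $+$ residual $v$'' does not by itself deliver the conclusion: the residual is not a canonical field, and there is no reason why $\partial_\nu^+ v$ restricted to $\Gamma_j$ should again have the two-phase form $v^+(s)\re^{\ri ks}+v^-(L_j-s)\re^{-\ri ks}$ with amplitudes analytic in $\real{s}>0$. An $L^\infty$ or trace bound on the residual controls its size but says nothing about its oscillatory structure, so the central structural claim of \eqref{eqn:convex} is left unproved for the part of the field that is not a wedge solution. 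Second, analyticity of $v_j^\pm$ in the whole right half-plane is asserted via ``analytic continuation of the Sommerfeld--Malyuzhinets integrals plus Phragm\'en--Lindel\"of'', but no growth hypothesis is verified, and again this could at best apply to the wedge contributions, not to the uncontrolled residual. The $k$-explicit patching that you flag as ``the main obstacle'' is in fact where essentially all of the content of the theorem lies.

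The proof in the literature (\cite[Theorems 3.2 and 4.3]{HeLaMe:11}, following \cite[Theorem 3.2 and Corollary 3.4]{Convex}) avoids both difficulties by a more global and more elementary device, the same one indicated for the screen case in Theorem~\ref{thm:31130812}: extend $\Gamma_j$ to the full line containing it and represent $u$ in the half-plane above that line using the Dirichlet half-plane Green's function $\Phi_k(\bx,\by)-\Phi_k(\bx,\by')$, with $\by'$ the image point. Because $\gamma u=0$ on $\Gamma_j$ itself, the resulting boundary integral runs only over the two semi-infinite portions of the line beyond $P_j$ and $P_{j+1}$; subtracting the physical-optics field (which supplies $\Psi$) and taking the normal derivative on $\Gamma_j$ produces \emph{exactly} the two terms $v_j^{\pm}(\cdot)\re^{\pm\ri ks}$, with $v_j^\pm$ given by explicit integrals whose kernels are manifestly analytic in $\real{s}>0$ and which are bounded, via Hankel-function asymptotics, by a constant times $k\,\sup_{\bx}|u(\bx)|\,|ks|^{-1/2}$. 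The $k^{3/2}\log^{1/2}(2+k)$ prefactor then comes from a $k$-explicit bound on $\sup|u|$, and the modified exponents $|ks|^{-\delta_j^\pm}$ for $0<|s|\leq 1/k$ come from a separate local corner-regularity (Meixner) estimate on $\partial_\nu^+u$ near $P_j$ and $P_{j+1}$ --- this is the only place where the wedge analysis you describe enters, and only for the singularity exponent, not for the global phase decomposition or the analyticity. To salvage your approach you would need to show that the residual field's normal derivative itself admits the representation \eqref{eqn:convex} with analytic amplitudes; the half-plane Green's function identity is precisely the tool that achieves this in one stroke.
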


Using the representation~(\ref{eqn:convex}) on each side of the polygon leads to a representation of the form~(\ref{eqn:ansatz}) with $V_0(\bx(s),k)=\Psi(\bx(s))$, $M=2n_s$, $V_{2j-1}(\bx(s),k)=v_j^+(s)$ if $\bx(s)\in\Gamma_j$ and zero otherwise, $V_{2j}(\bx(s),k)=v_j^-(s)$ if $\bx(s)\in\Gamma_j$ and zero otherwise, $\psi_{2j-1}(\bx(s))=\bx(s)\cdot\hat{d}_j$, and $\psi_{2j}(\bx(s))=-\bx(s)\cdot\hat{d}_j$, $j=1,\ldots,n_s$, where $\hat{d}_j$ is a unit vector parallel to $\Gamma_j$.
Our approximation space $\tilde{V}_{N,k}$ for
\begin{equation}
  \varphi(s):=\frac{1}{k}\left(\partial_{\nu}^+ u \left(\bx(s)\right)-\Psi\left(\bx(s)\right)\right), \quad \bx(s)\in\Gamma.
  \label{eqn:varphi_convex}
\end{equation}
is then identical on each side of the polygon to that for the screen~(\ref{eqn:VNk}), in effect replacing $v_j^+(s)$ and $v_j^-(L_j-s)$ by piecewise polynomials supported on overlapping geometric meshes, graded towards the singularities at $s=0$ and $s=L_j$ respectively.  As for the screen, the function $\varphi$, which we seek to approximate, can be thought of as the scaled difference between $\partial_{\nu}^+ u$ and its ``Physical Optics'' approximation $\Psi$, which again represents the direct contribution of the incident and reflected waves (when they are present).  The second and third terms in~(\ref{eqn:convex}) represent the diffracted rays emanating from the corners $P_j$ and $P_{j+1}$, respectively.

The following best approximation estimate follows from \cite[Theorem~4.3, Theorem~5.5]{HeLaMe:11}.
\begin{thm}
  If $c,k_0>0$ and $n\geq cp$, $k\geq k_0$, then, for some $C,\tau>0$, depending only on $c$, $k_0$ and $\Gamma$,
  \[
    \inf_{w_N\in \tilde{V}_{N,k}} \left\| \partial_{\nu}^+ u - w_N \right\|_{L^2(\Gamma)}\leq Ck^{1/2+\alpha}\log^{1/2}(2+k) \re^{-p\tau},
  \]
  where $\alpha:=1-\min_{m=1,\ldots,n_s}(1-\pi/\omega_m) \in (1/2,1)$.
  \label{thm:convex_best}
\end{thm}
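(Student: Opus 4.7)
The plan is to combine the regularity decomposition of Theorem~\ref{thm:convex_reg} with sharp $hp$-type best-approximation estimates for piecewise polynomials on the geometrically graded mesh $\mathcal{G}_n$. By Theorem~\ref{thm:convex_reg}, on each side $\Gamma_j$,
\[
\partial_\nu^+ u(\bx(s)) = \Psi(\bx(s)) + v_j^+(s)\re^{\ri ks} + v_j^-(L_j-s)\re^{-\ri ks},
\]
with $v_j^\pm$ analytic in $\{\real{s}>0\}$ and satisfying the two-regime singular bound~\eqref{eqn:vjpmbound}. Multiplication by $\re^{\pm\ri ks}$ is an $L^2$-isometry on $\Gamma_j$, and $\tilde V_{N,k}$ is designed precisely to accommodate $\Psi$ together with functions of the form $\rho^\pm(s)\re^{\pm\ri ks}$ with $\rho^\pm\in\mathcal{P}_{p,n}(0,L_j)$; so the problem reduces to controlling
\[
\inf_{\rho \in \mathcal{P}_{p,n}(0,L_j)} \|v_j^\pm - \rho\|_{L^2(0,L_j)}
\]
on each side, with the mesh graded toward the appropriate corner.

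The key lemma to prove is a one-dimensional $hp$ best-approximation estimate: if $v$ is analytic in $\{\real{s}>0\}$ and obeys the pointwise bound of~\eqref{eqn:vjpmbound} with exponent $\delta \in (0,1/2)$, and if $n\geq cp$, then
\[
\inf_{\rho \in \mathcal{P}_{p,n}(0,L)} \|v-\rho\|_{L^2(0,L)} \leq C k^{3/2-\delta}\log^{1/2}(2+k)\,\re^{-p\tau},
\]
with $C,\tau>0$ depending only on $c$, $\sigma$, $L$, and $k_0$. I would prove this elementwise on $\mathcal{G}_n(0,L)$. On the innermost element $[0,\sigma^{n-1}L]$ (which, for $n\geq cp$ and $p$ large, lies inside $|s|\leq 1/k$; otherwise it splits at $|s|=1/k$ into the two regimes of~\eqref{eqn:vjpmbound}), take $\rho\equiv 0$ and integrate the singular bound directly; this produces a contribution of order $k^{3/2-\delta}\log^{1/2}(2+k)\,\sigma^{(n-1)(1/2-\delta)}$, and the geometric factor is bounded by $\re^{-p\tau_1}$ for a $p$-independent $\tau_1>0$ because $n\geq cp$. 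On the remaining elements $I_\ell=[\sigma^{n-\ell+1}L,\sigma^{n-\ell}L]$, $\ell\geq 2$, the distance from the corner is comparable to $|I_\ell|$, so $v$ extends analytically into a Bernstein ellipse around $I_\ell$ whose aspect ratio is bounded below uniformly in $\ell$ and $p$; a standard Chebyshev/Bernstein estimate then yields exponential decay $\re^{-p\tau_2}$ of the degree-$p$ polynomial approximation error on each such element, against $\|v\|_{L^\infty(I_\ell)}|I_\ell|^{1/2}\leq Ck\log^{1/2}(2+k)$. Summing the $O(n)=O(p)$ element-wise $L^2$ errors costs only a $\sqrt{p}$ factor that can be absorbed into a slightly smaller rate.

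Applying the lemma with $\delta=\delta_j^\pm$ and summing the $2n_s$ error contributions, the smallest exponent $\delta_* := \min_{j,\pm}\delta_j^\pm = 1-\max_m \pi/\omega_m = 1-\alpha$ dominates (since $k^{3/2-\delta}$ is decreasing in $\delta$), yielding
\[
\inf_{w_N\in\tilde V_{N,k}} \|\partial_\nu^+u - w_N\|_{L^2(\Gamma)} \leq C k^{3/2-(1-\alpha)}\log^{1/2}(2+k)\,\re^{-p\tau} = Ck^{1/2+\alpha}\log^{1/2}(2+k)\,\re^{-p\tau},
\]
as required. The main obstacle is the $hp$ best-approximation lemma with constants uniform in $p$ and $k$: the near-corner element, where a Bernstein-ellipse argument cannot be used, must be controlled directly via the algebraic singularity bound, and this contribution has to be balanced against the exponential rates on the remaining elements by tuning the grading $\sigma$ and the ratio $c$ so that $(1/2-\delta)\log(1/\sigma)\cdot cp$ and $p\tau_2$ combine into a single clean exponential $\re^{-p\tau}$; this balancing is the technical heart of \cite[Theorems~4.3,~5.5]{HeLaMe:11}, and it is what forces the restriction $n\geq cp$.
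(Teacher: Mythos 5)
Your proposal is correct and follows essentially the same route as the paper, which does not prove Theorem~\ref{thm:convex_best} directly but cites \cite[Theorems~4.3, 5.5]{HeLaMe:11}: there, exactly as you outline, the decomposition of Theorem~\ref{thm:convex_reg} reduces matters to $hp$-approximation of the analytic but corner-singular amplitudes $v_j^\pm$ on the geometric mesh, with the zero approximant and direct integration of the bound \eqref{eqn:vjpmbound} on the element touching the corner, and Bernstein-ellipse estimates (uniform in the layer index because consecutive mesh ratios are fixed) on the remaining elements. Your bookkeeping of the $k$-powers --- the worst exponent $\delta_*=1-\alpha$ from the innermost elements giving $k^{3/2-\delta_*}=k^{1/2+\alpha}$, the outer elements contributing only $k\log^{1/2}(2+k)$, and the $\sqrt{p}$ from summing the layers being absorbed into the exponential --- matches the cited argument.
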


Having designed an appropriate approximation space $\tilde{V}_{N,k}$ we use the Galerkin method to select an element to approximate $\varphi$.  Since convex polygons are star-shaped, in this case we can use the integral equation formulation~(\ref{eq:BIE_sss4}), i.e. we seek $\varphi_N\in\tilde{V}_{N,k}$ such that
\begin{equation} \label{eq:BIE_sss4_discrete}
  \langle\scA_k \varphi_N,w_N\rangle_{\Gamma} = \frac{1}{k} \langle f_k - \Psi,w_N\rangle_{\Gamma}, \mbox{ for all } w_N \in \tilde{V}_{N,k}.
\end{equation}
Thanks to the coercivity of the integral operator $\scA_k$, we have the following error estimate (cf. \cite[Corollary~6.2]{HeLaMe:11}):
\begin{thm}
\label{thm:convex_approx}
  If the assumptions of Theorem~\ref{thm:convex_best} hold then there exist constants $C,\tau>0$, dependent only on $k_0$, $c$ and $\Gamma$, such that
  \[ \left\|\varphi-\varphi_N\right\|_{L^2(\Gamma)} \leq C k^{\alpha}\log^{1/2}(2+k) \re^{-p\tau}. \]
\end{thm}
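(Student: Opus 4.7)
The plan is to derive Theorem~\ref{thm:convex_approx} as a standard quasi-optimality consequence of Galerkin's method, driven by (i) the $k$-uniform coercivity of $\scA_k$ on $L^2(\Gamma)$ for star-shaped $\Omega_-$, (ii) a $k$-explicit continuity bound for $\scA_k$, and (iii) the best approximation estimate of Theorem~\ref{thm:convex_best}. First I would rewrite the continuous equation \eqref{eq:BIE_sss4} in terms of the unknown $\varphi$. Substituting the splitting $\partial_\nu^+ u = \Psi + k\varphi$ from \eqref{eqn:varphi_convex} gives the operator equation $\scA_k \varphi = \tfrac{1}{k}(f_k - \scA_k\Psi)$, and the natural Galerkin discretisation of this equation on $\tilde V_{N,k}$ is exactly \eqref{eq:BIE_sss4_discrete} (reading the right-hand side consistently). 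This yields Galerkin orthogonality $\langle \scA_k(\varphi-\varphi_N), w_N\rangle_\Gamma = 0$ for all $w_N \in \tilde V_{N,k}$.

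Since $\Omega_-$ is a convex polygon, it is star-shaped with respect to some interior origin, so \eqref{eq:xdotn} holds for some $c>0$, and by the result of \cite{SpChGrSm:11} recalled just after \eqref{eq:calAk} the operator $\scA_k:L^2(\Gamma)\to L^2(\Gamma)$ is coercive with coercivity constant bounded below uniformly in $k$ (by $c/2$, say). Hence Céa's Lemma~\ref{lem:cea} delivers the quasi-optimal estimate
\[
\|\varphi-\varphi_N\|_{L^2(\Gamma)} \,\leq\, \frac{2\,\|\scA_k\|_{L^2(\Gamma)\to L^2(\Gamma)}}{c}\,\inf_{w_N\in\tilde V_{N,k}}\|\varphi-w_N\|_{L^2(\Gamma)}.
\]

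To convert Theorem~\ref{thm:convex_best} into a best approximation bound for $\varphi$, I would use that $k\varphi = \partial_\nu^+ u - \Psi$ is exactly the oscillatory remainder that $\tilde V_{N,k}$ is designed to represent (via the splitting \eqref{eqn:convex}); rescaling the bound of Theorem~\ref{thm:convex_best} by $1/k$ (and noting that $\tilde V_{N,k}$ is invariant under scalar multiplication) yields
\[
\inf_{w_N\in\tilde V_{N,k}}\|\varphi-w_N\|_{L^2(\Gamma)} \,\leq\, C\,k^{-1/2+\alpha}\,\log^{1/2}(2+k)\,\re^{-p\tau}.
\]
Feeding this into the Céa bound and combining with a $k$-explicit continuity bound $\|\scA_k\|_{L^2(\Gamma)\to L^2(\Gamma)} \leq C k^{1/2}$ (up to logarithmic factors, which can be absorbed into the $\log^{1/2}(2+k)$ factor) returns the claimed estimate $C k^{\alpha}\log^{1/2}(2+k)\re^{-p\tau}$.

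The hard part is step~(ii): producing a clean, $k$-explicit bound for $\|\scA_k\|_{L^2(\Gamma)\to L^2(\Gamma)}$ on a polygon. The definition \eqref{eq:calAk} involves $D_k'$, the tangential derivative $\bx\cdot\nabla_\Gamma S_k$, and the oscillatory multiplier $\eta(\bx)=k|\bx|+\ri(d-1)/2$, each of which must be bounded on $L^2(\Gamma)$ with an explicit dependence on $k$. Once this operator-norm bookkeeping is in place, everything else is standard: the deeper analytical content, namely the exponential decay in $p$ and the $k^{1/2+\alpha}\log^{1/2}(2+k)$ best approximation rate on the graded hp-meshes near each corner, is provided by Theorem~\ref{thm:convex_best}, and the coercivity estimate removes any $k$-dependence hidden in the discrete inf-sup constant.
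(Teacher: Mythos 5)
Your proposal is correct and follows essentially the same route the paper indicates: Galerkin orthogonality for the star-combined equation \eqref{eq:BIE_sss4_discrete}, the $k$-uniform coercivity of $\scA_k$ on $L^2(\Gamma)$ from \cite{SpChGrSm:11} fed into C\'ea's Lemma~\ref{lem:cea}, the $k^{1/2}$ continuity bound for $\scA_k$, and the rescaled best approximation estimate of Theorem~\ref{thm:convex_best}, exactly as in \cite[Corollary~6.2]{HeLaMe:11}. The bookkeeping $k^{1/2}\cdot k^{-1/2+\alpha}=k^{\alpha}$ matches the stated bound, so no further comment is needed.
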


To compute the solution in the domain, we rearrange~(\ref{eqn:varphi_convex}) to get
\begin{equation}
  \partial_{\nu}^+ u\left(\bx(s)\right) = k \varphi(s)+ \Psi\left(\bx(s)\right) \approx k \varphi_N(s)+ \Psi\left(\bx(s)\right), \quad \bx(s)\in\Gamma,
  \label{eqn:dudnapprox}
\end{equation}
and then we insert this approximation to $\partial_{\nu}^+ u$ into the representation formula~(\ref{eq:grt_ss}) to get an approximation to $u$, which we denote by $u_N$.  We then have the following error estimate (cf. \cite[Theorem~6.3]{HeLaMe:11}, \cite[Corollary~64]{NonConvex}):
\begin{thm}
\label{thm:convex_uapprox}
  If the assumptions of Theorem~\ref{thm:convex_best} hold then there exist constants $C,\tau>0$, dependent only on $k_0$, $c$ and $\Gamma$, such that
  \[ \frac{\left\|u-u_N\right\|_{L^\infty(\Omega_+)}}{\left\|u\right\|_{L^\infty(\Omega_+)}} \leq C k\log(2+k) \re^{-p\tau}. \]
\end{thm}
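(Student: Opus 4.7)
The plan is to convert the $L^2(\Gamma)$ boundary error bound of Theorem~\ref{thm:convex_approx} into an $L^\infty(\Omega_+)$ interior error bound via the Green's representation \eqref{eq:grt_ss} and mapping properties of the single-layer potential $\cS_k$.

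\textbf{Step 1: represent the error as a potential.} Using \eqref{eqn:dudnapprox}, I define $u_N$ by inserting the boundary approximation into the representation formula~\eqref{eq:grt_ss}, i.e.\
\[
u_N(\bx) := u^I(\bx) - \int_\Gamma \Phi_k(\bx,\by)\bigl(k\varphi_N(\by)+\Psi(\by)\bigr)\,\rd s(\by), \quad \bx\in\Omega_+.
\]
Subtracting this from \eqref{eq:grt_ss} and using the identity $\partial_\nu^+ u = k\varphi + \Psi$ (rearranging \eqref{eqn:varphi_convex}) yields the key reduction
\[
u(\bx) - u_N(\bx) = -k\,\cS_k(\varphi-\varphi_N)(\bx), \quad \bx\in\Omega_+.
\]

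\textbf{Step 2: pointwise bound on $\cS_k$ and assembly.} For $\bx\in\Omega_+$, Cauchy--Schwarz gives $|\cS_k\psi(\bx)|\le \|\Phi_k(\bx,\cdot)\|_{L^2(\Gamma)}\,\|\psi\|_{L^2(\Gamma)}$. Using the 2D asymptotics of $H_0^{(1)}$ (logarithmic near zero, $O(z^{-1/2})$ at infinity), I split the integration over $\Gamma$ into a near-field part $\{\by\in\Gamma:k|\bx-\by|\le 1\}$ and a far-field part. After a change of variables $u=k|\bx-\by|$ in the near-field contribution (giving $O(1/k)$) and direct integration in the far-field (giving $O(\log(2+k)/k)$), this yields the uniform bound
\[
\sup_{\bx\in\Omega_+}\|\Phi_k(\bx,\cdot)\|_{L^2(\Gamma)}\;\le\;C\bigl(\log(2+k)/k\bigr)^{1/2}.
\]
Combining with Theorem~\ref{thm:convex_approx} and Step~1 gives
\[
\|u-u_N\|_{L^\infty(\Omega_+)}\le C\,k^{1/2+\alpha}\log(2+k)\,\re^{-p\tau}.
\]
To pass to the relative error I lower-bound $\|u\|_{L^\infty(\Omega_+)}$: since $u=u^I+u^S$, the Sommerfeld radiation condition \eqref{eqn:SRC} forces $u^S(\bx)\to 0$ as $|\bx|\to\infty$, while $|u^I|\equiv 1$, so evaluation at a sufficiently distant point gives $\|u\|_{L^\infty(\Omega_+)}\ge c>0$ with $c$ independent of~$k$.

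\textbf{Principal obstacle.} The chief difficulty is matching the sharp $k$-dependence: the direct Cauchy--Schwarz estimate above delivers an exponent of $1/2+\alpha\in(1,3/2)$ on $k$, whereas the theorem claims the cleaner exponent $1$. To close this gap one must either upgrade the $L^2(\Gamma)\to L^\infty(\Omega_+)$ mapping of $\cS_k$ (for example by using the smoothing $\cS_k:L^2(\Gamma)\to H^{3/2}_{\mathrm{loc}}(\R^d)\hookrightarrow L^\infty_{\mathrm{loc}}$ in 2D with explicit tracking of $k$-dependence), or else exploit the oscillatory structure $\re^{\pm\ri k s}$ of the residual $\varphi-\varphi_N$ through integration by parts against the Hankel kernel on each side of the polygon, borrowing regularity of $\varphi-\varphi_N$ captured by the geometric mesh. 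This is the refinement carried out in~\cite[Theorem~6.3]{HeLaMe:11} and \cite[Corollary~64]{NonConvex}; once such a sharper mapping estimate is in place, the rest of the argument is routine.
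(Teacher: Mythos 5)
Your overall framework---the reduction $u-u_N=-k\,{\cal S}_k(\varphi-\varphi_N)$ via \eqref{eq:grt_ss}, Cauchy--Schwarz, and the kernel estimate $\sup_{\bx}\|\Phi_k(\bx,\cdot)\|_{L^2(\Gamma)}\leq C(\log(2+k)/k)^{1/2}$---is exactly the route taken in the sources from which this theorem is quoted (\cite[Theorem~6.3]{HeLaMe:11}, \cite{NonConvex}; the present paper gives no proof of its own), and your Steps~1 and~2 are correct as far as they go. But, as you acknowledge, what you obtain is $Ck^{1/2+\alpha}\log(2+k)\re^{-p\tau}$ with $1/2+\alpha\in(1,3/2)$, which does \emph{not} imply the stated bound $Ck\log(2+k)\re^{-p\tau}$, so the proof is incomplete. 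Moreover, neither of your suggested repairs is the one that works: the mapping property of ${\cal S}_k$ cannot be upgraded, since taking $\psi=\overline{\Phi_k(\bx_0,\cdot)}$ shows $\|{\cal S}_k\|_{L^2(\Gamma)\to L^\infty(\Omega_+)}\geq \sup_{\bx}\|\Phi_k(\bx,\cdot)\|_{L^2(\Gamma)}\gtrsim k^{-1/2}$, so $k^{-1/2}$ (up to logarithms) is sharp for general $L^2$ densities; and integration by parts against the Hankel kernel is not how the cited proofs proceed.

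The ingredient you are missing is the cancellation of $M(u):=\|u\|_{L^\infty(\Omega_+)}$ between numerator and denominator of the \emph{relative} error. The regularity bounds underlying Theorems~\ref{thm:convex_best} and~\ref{thm:convex_approx} are at root proportional to $M(u)$---compare the explicit factor $\mathcal{M}=\sup_{\bx}|u(\bx)|$ in the screen analogue, Theorem~\ref{thm:31130812}; the factor $k^{3/2}\log^{1/2}(2+k)$ in \eqref{eqn:vjpmbound} is $k\,M(u)$ with the star-shaped bound $M(u)\leq Ck^{1/2}\log^{1/2}(2+k)$ already substituted. Kept in unsubstituted form, the Galerkin error bound reads $\|\varphi-\varphi_N\|_{L^2(\Gamma)}\leq CM(u)\,k^{\alpha-1/2}\re^{-p\tau}$ (so that inserting $M(u)\leq Ck^{1/2}\log^{1/2}(2+k)$ recovers Theorem~\ref{thm:convex_approx}), and your Steps~1--2 then give $\|u-u_N\|_{L^\infty(\Omega_+)}\leq CM(u)\,k^{\alpha}\log^{1/2}(2+k)\re^{-p\tau}$; dividing by $\|u\|_{L^\infty(\Omega_+)}=M(u)$ yields $Ck^{\alpha}\log^{1/2}(2+k)\re^{-p\tau}\leq Ck\log(2+k)\re^{-p\tau}$, as claimed. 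By instead placing the worst case of $M(u)$ in the numerator (which Theorem~\ref{thm:convex_approx} has already done for you) while bounding the denominator below only by a constant, you forfeit exactly this factor of $k^{1/2}\log^{1/2}(2+k)$. Your lower bound $\|u\|_{L^\infty(\Omega_+)}\geq 1$ (from $u^S\to0$ at infinity and $|u^I|\equiv1$) is correct, but it is the wrong way to handle the denominator here.
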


Similarly, we can derive an approximation to the far field pattern (FFP) of the scattered field, given explicitly for $\hat{\bx}=\bx/|\bx|$ by
\begin{equation}
  F(\hat{\bx}) = - \int_\Gamma \re^{-\ri k \hat{\bx}\cdot \by} \partial_{\nu}^+ u(\by) \, \rd s(\by), \quad \hat{\bx}\in\mathbb{S}^1,
  \label{eqn:FFP}
\end{equation}
where $\mathbb{S}^1$ denotes the unit circle.  Efficient computation of the far field patten is of interest in many applications, see, e.g., \cite{CoKr:92}.
To compute an approximation $F_N$ to $F$, we again just insert the approximation~(\ref{eqn:dudnapprox}) into the integral~(\ref{eqn:FFP}).  We then have the following estimate (cf. \cite[Theorem~6.4]{HeLaMe:11}, \cite[Corollary~64]{NonConvex}):
\begin{thm}
\label{thm:convex_Fapprox}
  If the assumptions of Theorem~\ref{thm:convex_best} hold then there exist constants $C,\tau>0$, dependent only on $k_0$, $c$ and $\Gamma$, such that
  \[ \left\|F-F_N\right\|_{L^\infty(\mathbb{S}^1)} \leq C k^{1+\alpha}\log^{1/2}(2+k) \re^{-p\tau}. \]
\end{thm}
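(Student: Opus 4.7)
The plan is to reduce the bound on $\|F-F_N\|_{L^\infty(\mathbb{S}^1)}$ to the already-proved $L^2(\Gamma)$ best-approximation estimate in Theorem~\ref{thm:convex_approx}, by exploiting the simple form of the far field integral~\eqref{eqn:FFP}. Because $F_N$ is defined by substituting the HNA approximation \eqref{eqn:dudnapprox} into \eqref{eqn:FFP}, the error at each observation direction $\hat{\bx}\in \mathbb{S}^1$ is
\[
F(\hat{\bx})-F_N(\hat{\bx}) = -k\int_\Gamma \re^{-\ri k \hat{\bx}\cdot \by}\bigl(\varphi(s(\by))-\varphi_N(s(\by))\bigr)\,\rd s(\by),
\]
with the $\Psi$ term cancelling exactly and a clean factor of $k$ coming out of the $\partial_\nu^+ u = k\varphi + \Psi$ rescaling.

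First I would observe that the oscillatory kernel $\re^{-\ri k \hat{\bx}\cdot \by}$ has unit modulus on $\Gamma$, so a crude but wavenumber-robust estimate yields, uniformly in $\hat{\bx}\in\mathbb{S}^1$,
\[
|F(\hat{\bx})-F_N(\hat{\bx})| \;\leq\; k\,\|\varphi-\varphi_N\|_{L^1(\Gamma)} \;\leq\; k\,|\Gamma|^{1/2}\,\|\varphi-\varphi_N\|_{L^2(\Gamma)},
\]
by the Cauchy--Schwarz inequality applied on the bounded set $\Gamma$. Taking the supremum over $\hat{\bx}\in\mathbb{S}^1$ converts this into an $L^\infty(\mathbb{S}^1)$ bound in which the only $k$- and $p$-dependent factor is the $L^2(\Gamma)$ Galerkin error in the boundary unknown.

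Next I would plug in Theorem~\ref{thm:convex_approx}, which gives $\|\varphi-\varphi_N\|_{L^2(\Gamma)}\leq C k^{\alpha}\log^{1/2}(2+k)\,\re^{-p\tau}$ under the standing hypothesis $n\geq cp$, $k\geq k_0$. Combining this with the previous step produces exactly
\[
\|F-F_N\|_{L^\infty(\mathbb{S}^1)} \;\leq\; C\,k^{1+\alpha}\log^{1/2}(2+k)\,\re^{-p\tau},
\]
with a constant depending only on $k_0$, $c$, and $\Gamma$ (absorbing the geometric factor $|\Gamma|^{1/2}$).

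The argument is essentially a continuity estimate for the far field map, so there is no real obstacle; the only subtlety is the bookkeeping of the $k$-powers. One loses a factor $k$ from the rescaling $\partial_\nu^+ u = k\varphi + \Psi$ and a factor $k^\alpha$ from Theorem~\ref{thm:convex_approx}, producing the announced $k^{1+\alpha}$. It is worth remarking that the unit-modulus bound on the Fourier-type kernel is deliberately pessimistic: one could in principle exploit oscillations in $\re^{-\ri k \hat{\bx}\cdot \by}$, but this cannot improve the worst-case estimate over all $\hat{\bx}$ since for $\hat{\bx}$ aligned with a side of $\Gamma$ the exponential is essentially nonoscillatory along that side. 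Hence the stated bound is the natural one obtainable from the $L^2$ boundary estimate.
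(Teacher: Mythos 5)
Your argument is correct and is essentially the proof the paper relies on (via the cited Theorem~6.4 of \cite{HeLaMe:11} and Corollary~64 of \cite{NonConvex}): the $\Psi$ terms cancel, the kernel is bounded by one in modulus, Cauchy--Schwarz on the bounded curve $\Gamma$ reduces everything to the $L^2(\Gamma)$ Galerkin error, and the extra factor $k$ from the rescaling $\partial_\nu^+ u = k\varphi+\Psi$ gives the stated $k^{1+\alpha}$. The bookkeeping of constants and $k$-powers is exactly right.
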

\noindent Note that the estimates above for the solution in the domain and the FFP follow from results in~\cite{NonConvex} and are actually a little sharper than those in~\cite{HeLaMe:11}.

The algebraically $k$-dependent prefactors in the error estimates of Theorems~\ref{thm:convex_approx}, \ref{thm:convex_uapprox} and~\ref{thm:convex_Fapprox} can be absorbed into the exponentially decaying factors by allowing $p$ to grow modestly ($O(\log^2 k)$) with increasing $k$.  In practice, numerical results \cite{HeLaMe:11,ChGrLaSp:12} suggest that this is pessimistic, and that in many cases a fixed accuracy of approximation can be achieved without any requirement for the number of degrees of freedom to increase with~$k$.


To illustrate the approach described above, we present numerical results for the problem of scattering by a sound soft equilateral triangle, of side length $2\pi$, so that the number of wavelengths per side is equal to~$k$.  The total field for $k=10$ is plotted in Figure~\ref{fig:triangle}
\begin{figure}[ht]
\begin{center}
  \includegraphics[width=0.75\linewidth]{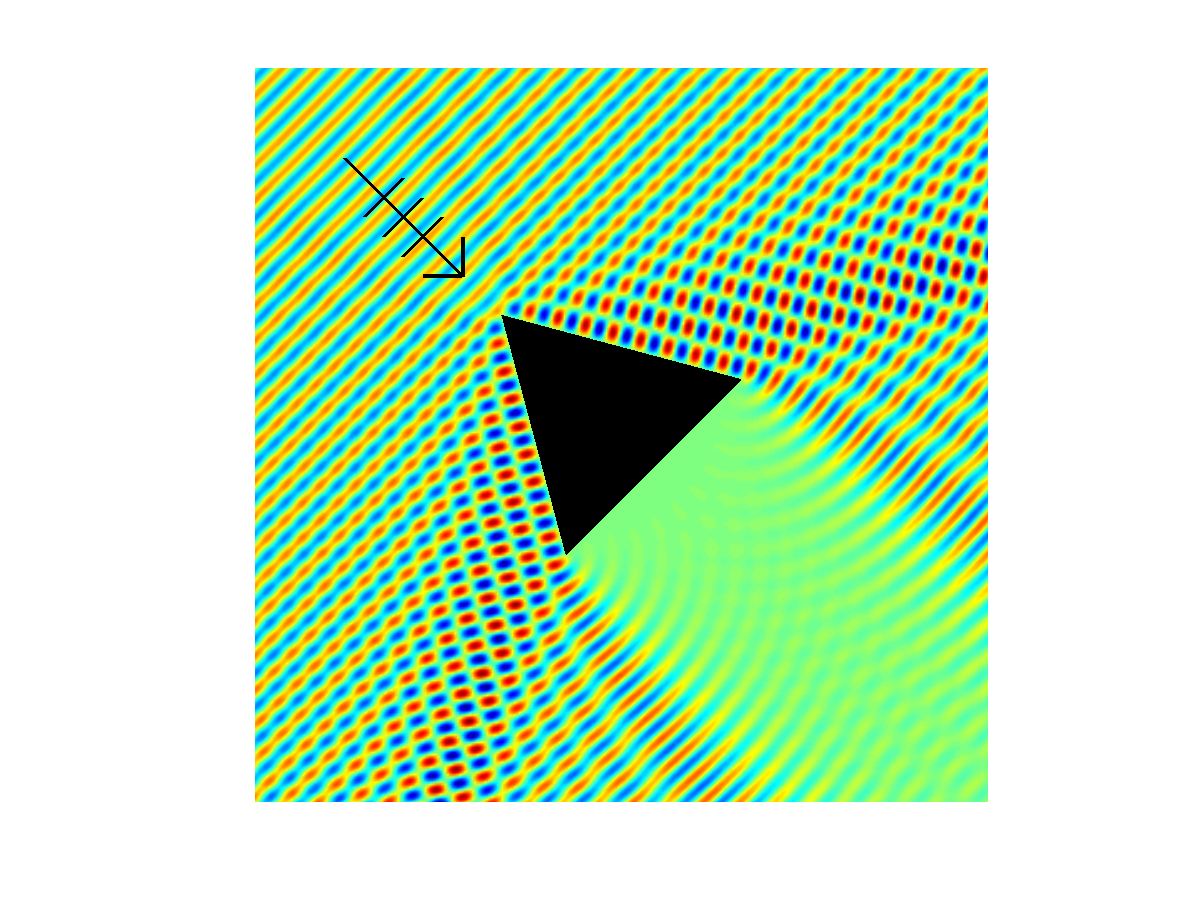}
\end{center}
\caption{Total field, scattering by a triangle}
\label{fig:triangle}
\end{figure}

In our computations we choose $n=2(p+1)$, as for the screen results above, so that the total number of degrees of freedom is $N=6n(p+1)=12(p+1)^2$.  Since the total number of degrees of freedom depends only on $p$, we again adjust our notation by defining $\psi_p(s):=\varphi_N(s)$.
In Figure~\ref{fig:tri_errors} we plot on a logarithmic scale the relative $L^2$ errors
\begin{align*}
\frac{\left\|\psi_6-\psi_p\right\|_{L^2\left(\Gamma\right)}}{\left\| \frac{1}{k}\partial_{\nu}^+ u \right\|_{L^2\left(\Gamma\right)}},
\end{align*}
against $p$ for a range of values of $k$, this quantity an estimate of the relative error in our approximation \eqref{eqn:dudnapprox} to $\partial_{\nu}^+ u$.  (Again we take the ``reference'' solution, our approximation to the true solution $\varphi$, to be $\psi_6$.)  This example is identical to one that appears in~\cite{HeLaMe:11}, except that here we show results for much higher values of $k$ (in~\cite{HeLaMe:11} the largest value of $k$ tested was $k=5120$), and we here plot relative errors in our approximation to $\partial_{\nu}^+ u$, rather than the relative errors in the approximation $\psi_p$ to $\varphi$ computed in \cite{HeLaMe:11}.  The $L^2$ norms are calculated using a high-order Gaussian quadrature routine on a mesh graded towards the endpoint singularities; see~\cite{HeLaMe:11} for details.
\begin{figure}[ht]
\begin{center}
  \includegraphics[width=0.75\linewidth]{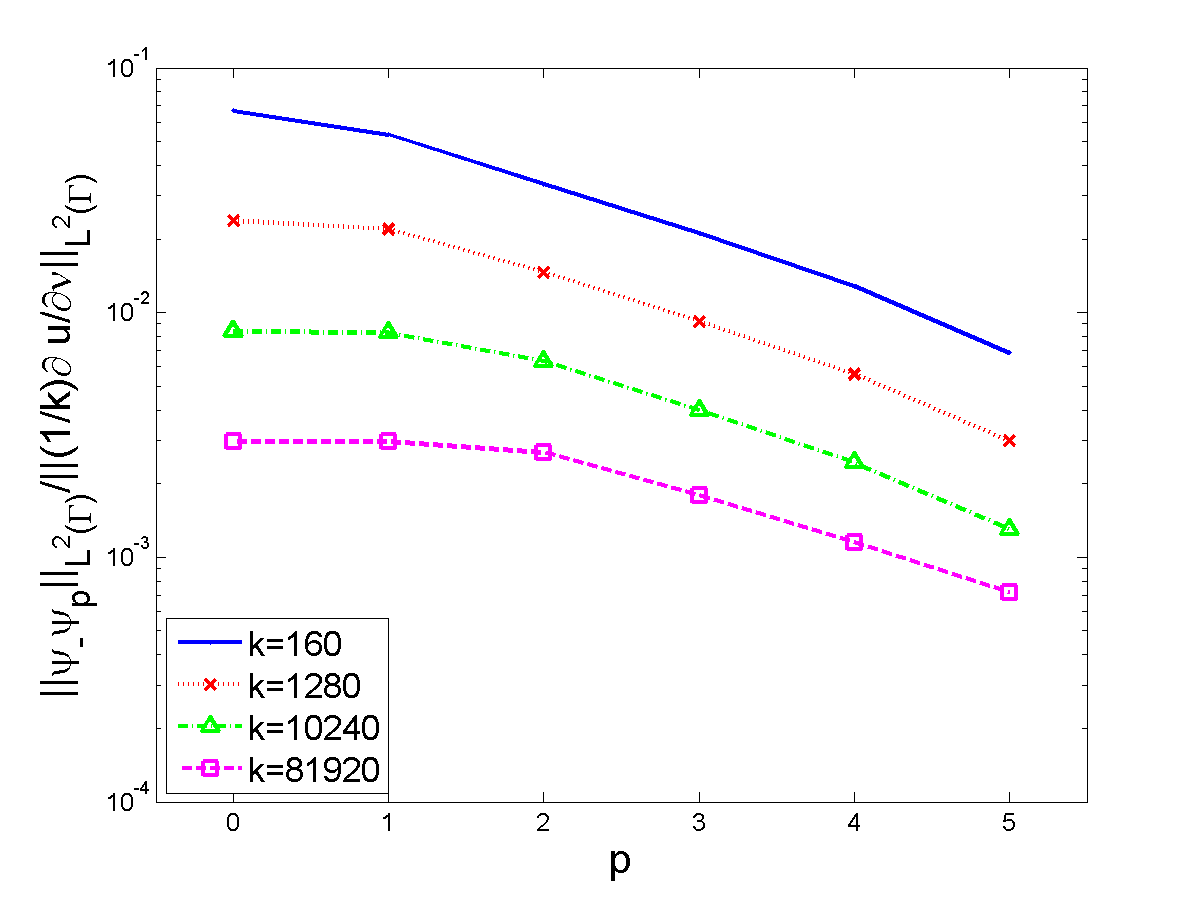}
\end{center}
\caption{Relative $L^2$ errors in $\frac{1}{k}\partial_\nu^+ u $}
\label{fig:tri_errors}
\end{figure}
For fixed $k$, as $p$ increases the error decays exponentially, as predicted by Theorem~\ref{thm:convex_approx}.  For fixed $p$, the error seems to decrease as $k$ increases.  To investigate this further, in Table~\ref{table:tri_errors} we show the relative $L^2$ errors (computed exactly as in Figure~\ref{fig:tri_errors}) as computed with fixed $p=3$ (and hence $N=192$), for a wider range of values of $k$.  The relative error decreases as~$k$ increases, for a fixed number of degrees of freedom, suggesting that the $k$-dependence of our error estimate in Theorem~\ref{thm:convex_approx} is pessimistic.
\begin{table}[htbp]
  \begin{center}
    \begin{tabular}{|r|c|c|c|c|}
       \hline
      $k$ & $\frac{N}{L/\lambda}$  & $ \|\psi_7-\psi_4\|_{L^2(\Gamma)}/\|\frac{1}{k}\partial_\nu^+ u \|_{L^2(\Gamma)}$ & COND & cpt(s) \\
      \hline
    5 &  1.28$\times10^{+1}$ &    7.91$\times10^{-2}$ &    5.36$\times10^{+1}$ &    4.68$\times10^{+2}$ \\
   10 &  6.40$\times10^{+0}$ &    6.18$\times10^{-2}$ &    2.38$\times10^{+1}$ &    3.84$\times10^{+2}$ \\
   20 &  3.20$\times10^{+0}$ &    4.76$\times10^{-2}$ &    2.97$\times10^{+1}$ &    3.40$\times10^{+2}$ \\
   40 &  1.60$\times10^{+0}$ &    3.64$\times10^{-2}$ &    3.85$\times10^{+1}$ &    3.72$\times10^{+2}$ \\
   80 &  8.00$\times10^{-1}$ &    2.77$\times10^{-2}$ &    5.08$\times10^{+1}$ &    3.55$\times10^{+2}$ \\
  160 &  4.00$\times10^{-1}$ &    2.10$\times10^{-2}$ &    6.76$\times10^{+1}$ &    4.31$\times10^{+2}$ \\
  320 &  2.00$\times10^{-1}$ &    1.60$\times10^{-2}$ &    9.00$\times10^{+1}$ &    3.57$\times10^{+2}$ \\
  640 &  1.00$\times10^{-1}$ &    1.21$\times10^{-2}$ &    1.20$\times10^{+2}$ &    4.93$\times10^{+2}$ \\
 1280 &  5.00$\times10^{-2}$ &    9.18$\times10^{-3}$ &    1.60$\times10^{+2}$ &    5.33$\times10^{+2}$ \\
 2560 &  2.50$\times10^{-2}$ &    6.96$\times10^{-3}$ &    2.13$\times10^{+2}$ &    5.23$\times10^{+2}$ \\
 5120 &  1.25$\times10^{-2}$ &    5.27$\times10^{-3}$ &    2.81$\times10^{+2}$ &    5.44$\times10^{+2}$ \\
10240 &  6.25$\times10^{-3}$ &    3.99$\times10^{-3}$ &    3.67$\times10^{+2}$ &    6.40$\times10^{+2}$ \\
20480 &  3.13$\times10^{-3}$ &    3.03$\times10^{-3}$ &    4.75$\times10^{+2}$ &    6.12$\times10^{+2}$ \\
40960 &  1.56$\times10^{-3}$ &    2.29$\times10^{-3}$ &    6.04$\times10^{+2}$ &    6.34$\times10^{+2}$ \\
81920 &  7.81$\times10^{-4}$ &    1.79$\times10^{-3}$ &    7.48$\times10^{+2}$ &    7.83$\times10^{+2}$ \\
   \hline
    \end{tabular}
  \end{center}
  \caption{Relative $L^2$ errors for scattering by a triangle, fixed $p=3$ (and hence $N=192$), various $k$.}
  \label{table:tri_errors}
\end{table}

We also show $N/(L/\lambda)$, the average number of degrees of freedom per wavelength, the condition number (COND) of the linear system that we solve (details of our implementation can be found in~\cite{HeLaMe:11}), and the computing time (cpt) in seconds to set up and solve our linear system.  All computations were carried out in Matlab, using a standard desktop PC.  We surmise that it might be possible to reduce these computing times with some effort to optimise the code.  The key point of these last two columns though is that both the condition number and computing time grow only very slowly as~$k$ increases, for fixed $p$.  Implementation of our scheme includes the need to evaluate oscillatory integrals; details of the approach we use for that can be found in~\cite[\S4]{ChGrLaSp:12}.  Whereas for standard boundary element methods standard practice suggests that ten degrees of freedom per wavelength are required for ``engineering accuracy'', the results in Table~\ref{table:tri_errors} suggest that $\frac{1}{k}\partial_\nu^+ u $ can be computed with much lower than 1\% relative error with a fixed number of degrees of freedom per wavelength (indeed, over 1000 wavelengths per degree of freedom for the case $k=81920$) with low condition numbers for the linear system and a total computing time of the order of ten minutes or so (and increasing only slowly as $k$ increases).


\subsubsection{Scattering by nonconvex polygons} For nonconvex polygons, we encounter behaviour that does not occur for convex polygons, as a result of which the leading-order asymptotic behaviour on $\Gamma$ is more complicated.  In particular, in this case we may see partial illumination of a side of the nonconvex polygon (whereas for a convex polygon a side is either completely illuminated or completely in shadow) and/or rereflections (where a wave that has been reflected from one side of the polygon may be incident on another side of the polygon), as illustrated in Figure~\ref{fig:nc_phenom}.
\begin{figure}[htbp]
\begin{minipage}{0.5\linewidth}
\centering
\includegraphics[scale=0.25]{./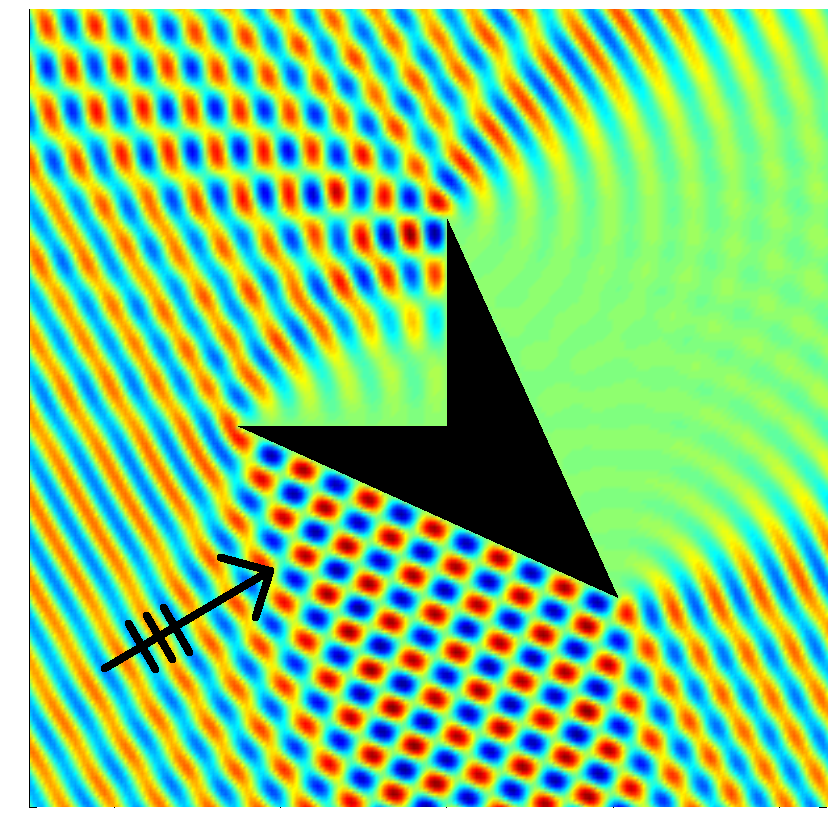}
\end{minipage}%
\begin{minipage}{0.5\linewidth}
\centering
\includegraphics[scale=0.35]{./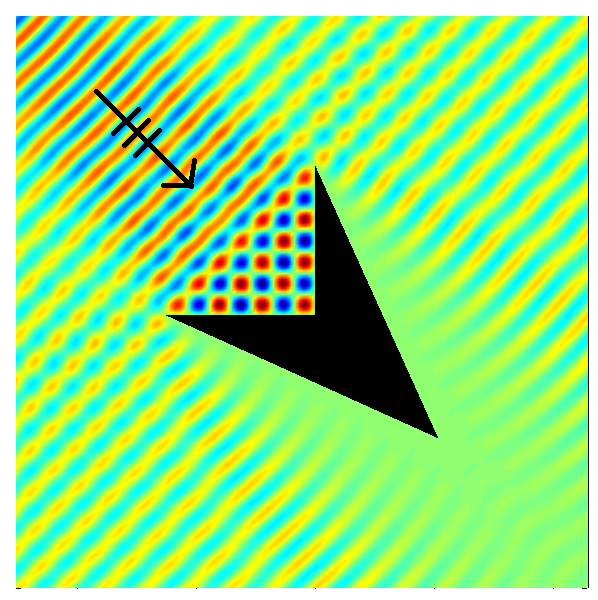}
\end{minipage}
\caption{Partial illumination (left) and rereflections (right)}
\label{fig:nc_phenom}
\end{figure}

We restrict attention to a particular class of nonconvex polygons that satisfy the following assumptions (a description of how the approach described below can be extended to polygons that do not satisfy these assumptions can be found in \cite[\S8]{NonConvex}):
\begin{ass}
  Each exterior angle $\omega_j$, $j=1,\ldots,n_s$, is either a right angle or greater than~$\pi$.
  \label{ass:1}
\end{ass}
\begin{ass}
  At each right angle, the obstacle lies within the dashed lines shown in Figure~\ref{fig:right_angle}.
  \label{ass:2}
\end{ass}
\begin{figure}[ht]
\begin{center}
  \includegraphics[width=0.75\linewidth]{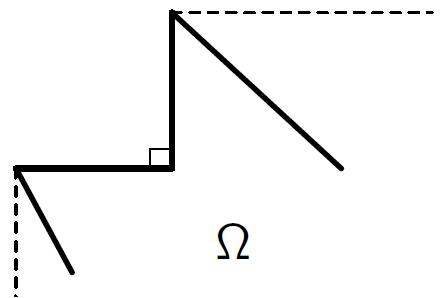}
\end{center}
\caption{Assumption~\ref{ass:2} on geometry of nonconvex polygon is that it lies entirely within the semi-infinite dashed lines}
\label{fig:right_angle}
\end{figure}
Polygons satisfying these criteria may or may not be star-shaped.  For each side $\Gamma_j$, $j=1,\ldots,n_s$, if either $\omega_j$ or $\omega_{j+1}$ is a right angle then we define that side to be a ``nonconvex'' side, otherwise we say it is a ``convex'' side, as illustrated for a particular non-star-shaped example in Figure~\ref{fig:notation_nc}.
\begin{figure}[ht]
\begin{center}
  \includegraphics[width=0.75\linewidth]{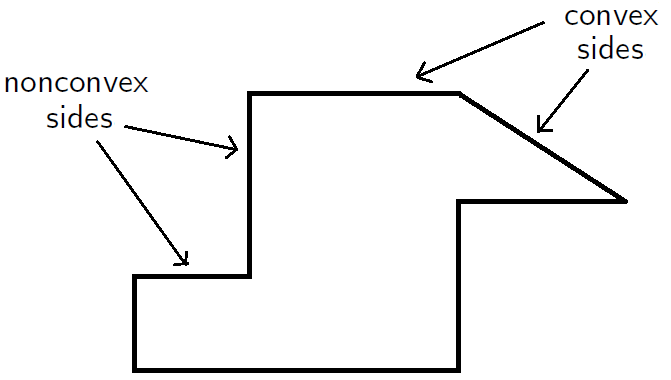}
\end{center}
\caption{Convex and nonconvex sides, for a non-star-shaped example}
\label{fig:notation_nc}
\end{figure}
On convex sides, $\partial_\nu^+u$ behaves exactly as in the convex case, and the approximation results above hold.  However, on nonconvex sides we need to consider the possibilites of partial illumination and/or rereflections.  To illustrate our approach, we consider the behaviour at a point $\bx(s)$ on a nonconvex side $\Gamma_j$, distance $s$ from $P_j$ and $r$ from $P_{j-1}$, as illustrated in Figure~\ref{fig:geometry_nc}.
\begin{figure}[ht]
\begin{center}
  \includegraphics[width=0.75\linewidth]{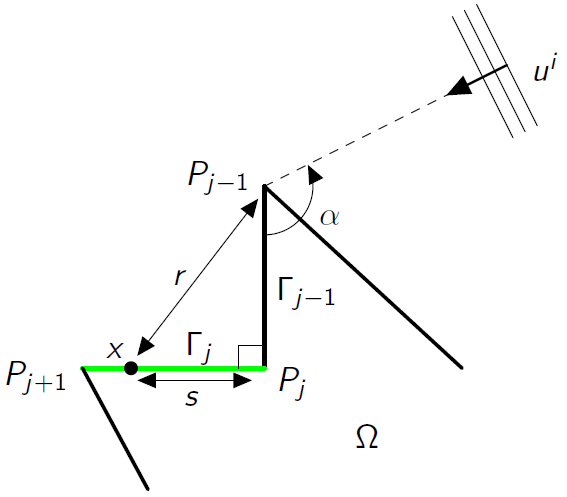}
\end{center}
\caption{Geometry of a nonconvex side $\Gamma_j$}
\label{fig:geometry_nc}
\end{figure}
Then $\Gamma_j$ will be fully illuminated if $\pi\leq\alpha<3\pi/2$ (where $\alpha$ is the incident angle shown in Figure~\ref{fig:geometry_nc}),
$\Gamma_j$ will be partially illuminated for some values of $\alpha$ in the range $\pi/2<\alpha<\pi$ (e.g., in the case that $L_j=L_{j-1}$, $\Gamma_j$ will be partially illuminated for $3\pi/4<\alpha<\pi$), and $\Gamma_j$ will be in shadow otherwise.  There will be reflections from $\Gamma_{j-1}$ onto $\Gamma_j$ if $\pi<\alpha<3\pi/2$.  Whatever the value of $\alpha$, there will be diffraction from $P_{j-1}$ and $P_{j+1}$ (either directly from the incident wave, or from waves that have travelled around $\Gamma$).

For $\bx(s)\in\Gamma_j$, where $s$ denotes distance from $P_j$ as in Figure~\ref{fig:geometry_nc}, the following theorem follows from~\cite[Theorem~36]{NonConvex}.
\begin{thm}
  \label{thm:nonconvex_reg}  Let $k\geq k_0>0$.  Then, for $\bx(s)\in\Gamma_j$,
  \begin{equation}
    \partial_\nu^+ u\left(\bx(s)\right) = \Psi^d\left(\bx(s)\right)+v_j^+(L_j+s)\re^{\ri ks}+v_j^-\left(L_j-s\right)\re^{-\ri ks}+\tilde{v}_j(s)\re^{\ri kr},
    \label{eqn:nonconvex}
  \end{equation}
  where
  \[ \Psi^d:=\left\{\begin{array}{ll} 2\partial u^d/\partial\nu & \mbox{if } \frac{\pi}{2}\leq\alpha\leq\frac{3\pi}{2}, \\
                                                            0 & \mbox{otherwise},
           \end{array}\right.
  \]
  the functions $v_j^\pm(t)$ have the same properties as those for the convex sides, in particular are analytic in the right half-plane $\real{t}>0$ and satisfy the bounds~(\ref{eqn:vjpmbound}), and the function $\tilde{v}_j$ is analytic in a complex $k$-independent neighbourhood $D_\epsilon$ of the side $\Gamma_j$ with
  \begin{equation}
    \left|\tilde{v}_j(t)\right| \leq Ck\log^{1/2} (2+k), \quad t\in D_\epsilon,
    \label{eqn:tildev_bound}
  \end{equation}
  where the constant $C>0$ depends only on $k_0$ and $\Gamma$.
\end{thm}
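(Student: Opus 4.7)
The plan is to obtain \eqref{eqn:nonconvex} by reducing the nonconvex-side analysis to the already-established convex-side decomposition of Theorem~\ref{thm:convex_reg}, combined with an unfolding/image argument at the right-angle corners permitted by Assumptions~\ref{ass:1}--\ref{ass:2}. First I would split $u$ near $\Gamma_j$ according to its ray content: incident and directly reflected rays (collected into the geometrical-optics term $\Psi^d$), a singly-rereflected wave bouncing off the adjacent side $\Gamma_{j-1}$ (responsible for the new $\tilde v_j(s)\re^{\ri kr}$ term), and the diffracted contributions from the neighbouring corners $P_{j-1}$, $P_j$, $P_{j+1}$ (feeding the $v_j^\pm$ terms). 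The case distinction on the incidence angle $\alpha$ of Figure~\ref{fig:geometry_nc} (full illumination for $\pi\le\alpha\le 3\pi/2$, partial illumination for a sub-range of $\alpha\in(\pi/2,\pi)$, rereflection present exactly for $\pi<\alpha<3\pi/2$) is a ray-tracing bookkeeping step which pins down when each contribution is actually present, and yields the piecewise definition of $\Psi^d$.

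The rereflection off $\Gamma_{j-1}$ is then handled by the classical image construction. At a right-angle joint, the composition of reflections across $\Gamma_{j-1}$ and $\Gamma_j$ is the point reflection about the shared corner, and Assumption~\ref{ass:2} is precisely the condition guaranteeing that, after reflecting $\Omega_-$ across $\Gamma_{j-1}$, the unfolded scatterer still lies in a half-plane whose boundary contains (the image of) $\Gamma_j$. In the unfolded picture, the rereflected wave becomes a directly diffracted wave on an auxiliary polygonal scatterer. Applying the convex-side analysis (Theorem~\ref{thm:convex_reg}, descending from the regularity results of~\cite{HeLaMe:11}) to this auxiliary problem produces on the image of $\Gamma_j$ a decomposition of the form $w^+(r)\re^{\ri kr}+w^-(L_j-r)\re^{-\ri kr}$ in the variable $r$, the distance from $P_{j-1}$. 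Undoing the image contributes the $\tilde v_j(s)\re^{\ri kr}$ summand to $\partial_\nu^+u|_{\Gamma_j}$; the companion $\re^{-\ri kr}$ piece, together with the direct diffraction from $P_j$ and $P_{j+1}$, regroups after the appropriate change of variable into the shifted-argument $v_j^\pm$ terms of \eqref{eqn:nonconvex}, which inherit analyticity in the right half-plane and the bounds \eqref{eqn:vjpmbound} directly from Theorem~\ref{thm:convex_reg}.

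The crux of the argument, and the step I expect to be the main obstacle, is proving \eqref{eqn:tildev_bound} together with analyticity of $\tilde v_j$ in a $k$-independent complex neighbourhood $D_\epsilon$ of $\Gamma_j$. Analyticity is essentially free in the unfolded picture: the side $\Gamma_j$ sits at distance at least $L_{j-1}$ from the singular corner $P_{j-1}$, so the half-plane analyticity of $v^+$ furnished by Theorem~\ref{thm:convex_reg} extends to a genuine complex neighbourhood of $\overline{\Gamma_j}$ (taking $\epsilon$ smaller than the Euclidean distance from $\Gamma_j$ to $P_{j-1}$). The stated $k$-dependent bound then drops out of inserting $|t|\gtrsim L_{j-1}$ into the large-argument bound $|v^+(t)|\le Ck^{3/2}\log^{1/2}(2+k)|kt|^{-1/2}$ from \eqref{eqn:vjpmbound}: the factor $|kt|^{-1/2}$ collapses to an extra $k^{-1/2}$, converting $k^{3/2}$ into the claimed $k$. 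The delicate remaining verification is that the convex-case decomposition on the auxiliary unfolded side regroups cleanly into the three types of terms in \eqref{eqn:nonconvex} with no leftover pieces, and that the partial-illumination regime is consistent between the physical and unfolded pictures; both are bookkeeping points resolved by a direct geometric check using Assumption~\ref{ass:2}, but they have to be carried out carefully in each of the subcases distinguished above.
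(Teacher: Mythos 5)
Your proposal diverges from the actual argument (this theorem is \cite[Theorem~36]{NonConvex}; the present paper only cites it) in ways that are not merely cosmetic, and two of the divergences are genuine gaps. First, you treat $\Psi^d$ as a geometrical-optics term (``incident and directly reflected rays'') and relegate partial illumination to ``ray-tracing bookkeeping''. But the statement defines $\Psi^d=2\partial u^d/\partial\nu$ with $u^d$ the \emph{exact} solution of the canonical knife-edge (Sommerfeld half-plane) problem for the semi-infinite screen containing $\Gamma_{j-1}$, and this choice is essential: in the partial-illumination regime the shadow boundary cast by the edge $P_{j-1}$ crosses $\Gamma_j$, a geometrical-optics leading term is discontinuous there, and the geometrical-theory-of-diffraction ``diffracted wave'' that your decomposition would have to absorb into $\tilde v_j\re^{\ri kr}$ has an amplitude that blows up at the shadow boundary. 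Subtracting the exact Fresnel-integral solution $u^d$ is precisely what makes the remainder representable with $\tilde v_j$ analytic in a full $k$-independent neighbourhood $D_\epsilon$ of the whole side, as claimed in \eqref{eqn:tildev_bound}; your decomposition cannot deliver that in the partially illuminated case. Relatedly, you identify $\tilde v_j(s)\re^{\ri kr}$ with a rereflected plane wave off $\Gamma_{j-1}$, but a reflected plane wave has phase linear in arclength along $\Gamma_j$, whereas $\re^{\ri kr}$ with $r=|\bx(s)-P_{j-1}|$ is the curved phase of a circular wave radiated from the corner $P_{j-1}$; the singly-rereflected plane wave is in fact already contained in $\Psi^d$, since the knife edge contains $\Gamma_{j-1}$.

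Second, the unfolding/image step is not available globally. Odd reflection of $u$ across the line through $\Gamma_{j-1}$ yields a Helmholtz solution only where $u$ vanishes on that line, i.e.\ on $\Gamma_{j-1}$ itself and not on its extension beyond $P_{j-1}$, which lies in $\Omega_+$ where $u\neq0$. So the ``auxiliary unfolded scatterer'' is not a well-posed sound-soft problem to which Theorem~\ref{thm:convex_reg} can be applied; and in any case that theorem is stated for convex polygons, its proof resting on a half-plane representation that fails here exactly because $\Gamma_{j-1}$ protrudes into the half-plane above the line containing $\Gamma_j$ --- so the properties of $v_j^\pm$ cannot be ``inherited directly'' either. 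The role of Assumption~\ref{ass:2} is also different from what you assert: it guarantees that the extension of $\Gamma_{j-1}$ beyond $P_j$ remains inside $\overline{\Omega_-}$, so that $u^d$ is defined and satisfies the Helmholtz equation throughout $\Omega_+$ and the Green's-representation argument of \cite[Lemma~35]{NonConvex} can be run. Your closing computation --- evaluating the $|kt|^{-1/2}$ bound of \eqref{eqn:vjpmbound} at arguments bounded below to convert $k^{3/2}\log^{1/2}(2+k)$ into $k\log^{1/2}(2+k)$ --- is the right mechanism both for \eqref{eqn:tildev_bound} and for the boundedness of $v_j^+(L_j+\cdot)$ on $\Gamma_j$, but it must be applied to amplitudes produced by the knife-edge representation, not to ones borrowed from the convex-polygon theorem.
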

Here $u^d$ is the known solution of a canonical diffraction problem, namely that of scattering by a semi-infinite ``knife edge'', precisely scattering of $u^I$ by the semi-infinite sound-soft screen starting at $P_{j-1}$ and extending vertically down through $P_j$; for details, see \cite[Lemma~35]{NonConvex}.

Using the representation~(\ref{eqn:nonconvex}) on each side of the polygon again leads to a representation of the form~(\ref{eqn:ansatz}).
Our approximation space $\hat{V}_{N,k}$ for
\begin{equation}
  \varphi(s):=\frac{1}{k}\left(\partial_\nu^+ u \left(\bx(s)\right)-\Psi^d\left(\bx(s)\right)\right), \quad \bx(s)\in\Gamma,
  \label{eqn:varphi_nonconvex}
\end{equation}
is then identical on each convex side of the polygon to that for the convex polygon and the screen~(\ref{eqn:VNk}), again replacing $v_j^+(s)$ and $v_j^-(L_j-s)$ by piecewise polynomials supported on overlapping geometric meshes, graded towards the singularities at $s=0$ and $s=L_j$ respectively.  However, on nonconvex sides it is slightly different.  In this case, we note first that in the representation~(\ref{eqn:nonconvex}) we have $v_j^+(L_j+s)$ rather than $v_j^+(s)$ (compare with~(\ref{eqn:convex})); hence $v_j^+$ is not singular on $\Gamma_j$ when it is nonconvex, and we approximate it by a polynomial (of order $p$) supported on the whole side $\Gamma_j$.  Secondly, we notice from the analyticity of $\tilde{v}_j$ and the bound~(\ref{eqn:tildev_bound}) that it is also sufficient to approximate $\tilde{v}_j$ by a polynomial (of order $p$) supported on the whole side~$\Gamma_j$.

The following best approximation estimate follows from \cite[Theorem~56]{NonConvex}.
\begin{thm}
  If $c,k_0>0$ and $n\geq cp$, $k\geq k_0$, then, for some $C,\tau>0$, depending only on $c$, $k_0$, and $\Gamma$,
  \begin{equation}
    \inf_{w_N\in \hat{V}_{N,k}} \left\| \partial_\nu^+ u - w_N \right\|_{L^2(\Gamma)}\leq Ck^{1/2+\alpha}\log^{1/2}(2+k) \re^{-p\tau},
    \label{eqn:nonconvex_best}
  \end{equation}
  where $\alpha:=1-\min_{m=1,\ldots,n_s, \, \omega_m\neq\pi/2}(1-\pi/\omega_m) \in (1/2,1)$.
  \label{thm:nonconvex_best}
\end{thm}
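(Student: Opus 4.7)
The plan is to mirror, side by side, the proof of Theorem~\ref{thm:convex_best}, replacing the two-term decomposition \eqref{eqn:convex} by the three-term decomposition \eqref{eqn:nonconvex} on each nonconvex side. Since $\hat V_{N,k}$ incorporates $\Psi^d$ together with piecewise-polynomial approximants to each of $v_j^\pm(L_j\pm\cdot)\re^{\pm\ri k\cdot}$ and $\tilde v_j(\cdot)\re^{\ri kr(\cdot)}$, the infimum splits into one algebraically-singular endpoint approximation per non-right-angle corner plus one or more analytic approximations for each shifted or diffraction amplitude.

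On every convex side the argument of Theorem~\ref{thm:convex_best} applies verbatim and delivers a local bound $\leq Ck^{1/2+\alpha_j}\log^{1/2}(2+k)\re^{-p\tau}$, where $\alpha_j$ involves only the two adjacent exterior angles (neither a right angle, by the definition of a convex side). On a nonconvex side $\Gamma_j$ I would estimate the three terms of \eqref{eqn:nonconvex} separately:
\begin{enumerate}[(i)]
\item Any $v_j^\pm(L_j\mp s)\re^{\mp\ri ks}$ whose singular endpoint lies on $\Gamma_j$ and arises from a non-right-angle corner has an algebraic endpoint singularity of order $|k\,\mathrm{dist}(s,\text{endpoint})|^{-\delta}$ with $\delta\in(0,1/2)$, controlled by \eqref{eqn:vjpmbound}. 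Piecewise-polynomial approximation of degree $p$ on a geometric mesh of $n\geq cp$ layers graded toward that endpoint, following the convex hp-argument of \cite[Theorem~5.5]{HeLaMe:11}, yields an $L^2$ best-approximation error $\leq Ck^{1/2+\delta}\log^{1/2}(2+k)\re^{-p\tau}$; the two regimes $|\cdot|\leq 1/k$ and $|\cdot|>1/k$ in \eqref{eqn:vjpmbound} are handled by isolating the innermost mesh element at scale $1/k$.
\item Any shifted amplitude (e.g.\ $v_j^+(L_j+\cdot)$ when $P_j$ is a right angle) is the restriction of $v_j^\pm$ to an interval bounded away from its singularity. By Theorem~\ref{thm:nonconvex_reg} this restriction is analytic on a complex $k$-independent neighbourhood of $\Gamma_j$, so a standard Chebyshev/Bernstein-ellipse estimate gives a degree-$p$ polynomial best-approximation error $\leq \tilde C\re^{-p\tilde\tau}\|v_j^\pm\|_{L^\infty}\leq Ck\log^{1/2}(2+k)\re^{-p\tilde\tau}$, with the $L^\infty$ bound coming from \eqref{eqn:vjpmbound} away from the singularity; the unimodular phase factor preserves $L^2$ norms.
\item The diffraction amplitude $\tilde v_j$ is analytic on the $k$-independent neighbourhood $D_\epsilon$ and satisfies the uniform bound $|\tilde v_j|\leq Ck\log^{1/2}(2+k)$ by \eqref{eqn:tildev_bound}; the same Chebyshev estimate as in (ii) yields a polynomial best-approximation error of the same order.
\end{enumerate}

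Summing the three bounds on each nonconvex side, adding the convex-side bounds, and taking the maximum over the finitely many sides gives \eqref{eqn:nonconvex_best}. The analytic pieces (ii) and (iii) decay with prefactor $k$ only, so are dominated by (i), and the global exponent is therefore $\alpha=1-\min_{\omega_m\neq\pi/2}(1-\pi/\omega_m)$; the right-angle corners $\omega_m=\pi/2$ drop out of the minimum precisely because at such a corner the adjacent ``$v^\pm$'' factor migrates from category (i) into category (ii), where the rate is strictly faster in $k$. The main obstacle is the $k$-explicit hp-approximation estimate in (i): one must integrate \eqref{eqn:vjpmbound} over the geometric mesh and track the innermost element at scale $s\sim 1/k$ carefully enough to extract exactly the factor $k^{1/2+\delta}$ and no more. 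This bookkeeping has already been carried out for the convex case in \cite{HeLaMe:11}; the nonconvex case only adds the strictly easier analytic pieces (ii) and (iii), which do not affect the final rate.
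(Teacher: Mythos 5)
Your overall strategy is exactly the one the paper intends: the paper itself gives no proof of this theorem (it defers to \cite[Theorem~56]{NonConvex}), but its description of $\hat V_{N,k}$ --- graded-mesh $hp$ approximation of the singular amplitudes exactly as in the convex case, together with single polynomials of degree $p$ on whole sides for the shifted amplitude $v_j^+(L_j+\cdot)$ and the diffraction amplitude $\tilde v_j$ --- is precisely your three-way split, and your explanation of why the right-angle corners drop out of the minimum defining $\alpha$ (the adjacent amplitude migrates from the singular to the analytic category) is the correct one.

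There is, however, a concrete error in the quantitative step that you yourself identify as the main obstacle. In item (i) you claim a local $hp$ bound $Ck^{1/2+\delta}\log^{1/2}(2+k)\re^{-p\tau}$ with $\delta=\delta_j^\pm\in(0,1/2)$. The correct local exponent is $k^{1/2+(1-\delta)}=k^{3/2-\delta}$: rewriting \eqref{eqn:vjpmbound} for $|s|\le 1/k$ as $|v_j^\pm(s)|\le Ck^{3/2-\delta}\log^{1/2}(2+k)|s|^{-\delta}$, the innermost element of the geometric mesh (of width $\sigma^{n-1}L_j$) contributes an $L^2$ error of order $k^{3/2-\delta}\log^{1/2}(2+k)\,\sigma^{(n-1)(1/2-\delta)}$, and it is the factor $k^{3/2-\delta}$, maximised over the non-right-angle corners, that produces $k^{1/2+\alpha}$ with $\alpha=1-\min_m(1-\pi/\omega_m)$. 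Your version $k^{1/2+\delta}$ is inconsistent with the already-established convex result (Theorem~\ref{thm:convex_best}, which carries the same exponent $\alpha$), and it also breaks your own concluding logic: since $\delta<1/2$ one has $k^{1/2+\delta}<k$, so the analytic pieces (ii) and (iii), which carry the prefactor $k\log^{1/2}(2+k)$, would then \emph{not} be dominated by (i), and your summation would yield a final bound of order $k\log^{1/2}(2+k)\re^{-p\tau}$ rather than \eqref{eqn:nonconvex_best}. With the exponent corrected, (i) does dominate (because $\alpha>1/2$) and the argument closes as you describe. A smaller point: the Bernstein-ellipse estimates in (ii) and (iii) require the bounds on $v_j^\pm$ and $\tilde v_j$ on complex neighbourhoods of the relevant intervals, which Theorem~\ref{thm:nonconvex_reg} supplies (analyticity in $\real{s}>0$ and in $D_\epsilon$ respectively), so those steps are sound.
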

Comparing Theorems~\ref{thm:convex_best} and~\ref{thm:nonconvex_best} we see that the best approximation estimate is identical for convex and nonconvex polygons, again implying that we can achieve any required accuracy with $N$ growing like $\log^2{k}$ as $k\to\infty$, rather than like $k$ as for a standard BEM.  Note also that the result~(\ref{eqn:nonconvex_best}) is sharper (in terms of $k$-dependence) than that in~\cite[Theorem~3.13]{ChGrLaSp:12}.

Having designed an appropriate approximation space $\hat{V}_{N,k}$ we again use the Galerkin method to select an element so as to effectively approximate $\varphi$.  For star-shaped polygons satisfying Assumptions~\ref{ass:1}--\ref{ass:2}, we can again use the integral equation formulation~(\ref{eq:BIE_sss4}), i.e. we seek $\varphi_N\in\hat{V}_{N,k}$ such that
\begin{equation} \label{eq:BIE_sss4_discrete_nc}
  \langle\scA_k \varphi_N,w_N\rangle_{\Gamma} = \frac{1}{k} \langle f_k - \Psi^d,w_N\rangle_{\Gamma}, \quad \mbox{for all } w_N \in \hat{V}_{N,k},
\end{equation}
in which case, due to the coercivity of the integral operator $\scA_k$, we have the following error estimates (cf. \cite[Theorems~61--63]{NonConvex}):
\begin{thm}
\label{thm:nonconvex_approx}
  For star-shaped polygons (satisfying also Assumptions~\ref{ass:1}--\ref{ass:2}), if the assumptions of Theorem~\ref{thm:nonconvex_best} hold then there exist constants $C,\tau>0$, dependent only on $k_0$, $c$ and $\Gamma$, such that
  \begin{eqnarray*}
    \left\|\varphi-\varphi_N\right\|_{L^2(\Gamma)} & \leq & C k^{\alpha}\log^{1/2}(2+k) \re^{-p\tau}, \\
    \displaystyle{\frac{\left\|u-u_N\right\|_{L^\infty(\Omega_+)}}{\left\|u\right\|_{L^\infty(\Omega_+)}}} & \leq & C k\log(2+k) \re^{-p\tau}, \\
    \left\|F-F_N\right\|_{L^\infty(\mathbb{S}^1)} & \leq & C k^{1+\alpha}\log^{1/2}(2+k) \re^{-p\tau}.
  \end{eqnarray*}
\end{thm}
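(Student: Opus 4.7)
The plan is to treat the nonconvex case in parallel to the convex case (Theorems \ref{thm:convex_approx}--\ref{thm:convex_Fapprox}), substituting the nonconvex best approximation estimate (Theorem \ref{thm:nonconvex_best}) for the convex one and relying on the fact that, for star-shaped $\Omega_-$, the star-combined operator $\scA_k:L^2(\Gamma)\to L^2(\Gamma)$ remains coercive, with coercivity constant at least $c/2$ independent of $k$, cf.\ \eqref{eq:xdotn}. Assumptions \ref{ass:1}--\ref{ass:2} are only needed to produce the decomposition \eqref{eqn:nonconvex}; once that is in hand, coercivity and Galerkin orthogonality drive the convergence analysis exactly as in the convex setting.

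\emph{Step 1: $L^2$ error of $\varphi_N$.} The scheme \eqref{eq:BIE_sss4_discrete_nc} is the Galerkin method for the variational form of $\scA_k(k\varphi+\Psi^d)=f_k$. Since $\scA_k:L^2(\Gamma)\to L^2(\Gamma)$ is continuous (with continuity constant $C_k$ growing at most like $k^{1/2}$, derived from known $k$-explicit bounds on $S_k$, $D_k^\prime$ and $\bx\cdot\nabla_\Gamma S_k$) and coercive with constant $c/2$ independent of $k$, C\'ea's lemma (Lemma \ref{lem:cea}) yields
\[
\|\varphi-\varphi_N\|_{L^2(\Gamma)}\le \tfrac{2 C_k}{c}\,\inf_{w_N\in \hat V_{N,k}}\|\varphi-w_N\|_{L^2(\Gamma)}.
\]
Theorem \ref{thm:nonconvex_best}, combined with the scaling $\varphi=k^{-1}(\partial_\nu^+u-\Psi^d)$ in \eqref{eqn:varphi_nonconvex}, bounds the infimum by $C k^{-1/2+\alpha}\log^{1/2}(2+k)\,\re^{-p\tau}$. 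Multiplying by $C_k=O(k^{1/2})$ delivers the first claimed estimate. This is the same arithmetic as in the convex case, because Theorem \ref{thm:nonconvex_best} has been engineered to have the same $k$-exponents as Theorem \ref{thm:convex_best}.

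\emph{Step 2: Domain and far-field pattern errors.} Substituting the approximation $\partial_\nu^+u\approx k\varphi_N+\Psi^d$ into the representation formula \eqref{eq:grt_ss} gives, for $\bx\in\Omega_+$,
\[
u(\bx)-u_N(\bx)=-k\,\mathcal{S}_k(\varphi-\varphi_N)(\bx),
\]
and Cauchy--Schwarz yields $|u(\bx)-u_N(\bx)|\le k\,\|\Phi_k(\bx,\cdot)\|_{L^2(\Gamma)}\,\|\varphi-\varphi_N\|_{L^2(\Gamma)}$. A standard computation using the asymptotics of the 2D Hankel function $H_0^{(1)}$ (polynomial decay for $k|\bx-\by|\gg 1$, logarithmic singularity for $k|\bx-\by|\ll 1$) bounds $\|\Phi_k(\bx,\cdot)\|_{L^2(\Gamma)}$ by $C k^{-1/2}\log^{1/2}(2+k)$ uniformly in $\bx\in\Omega_+$. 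Combined with the standard lower bound $\|u\|_{L^\infty(\Omega_+)}\ge c_0>0$ for plane-wave incidence, Step 1 delivers the second estimate. For the far-field pattern we likewise insert the same approximation into \eqref{eqn:FFP} to obtain $F(\hat\bx)-F_N(\hat\bx)=-k\int_\Gamma \re^{-\ri k \hat\bx\cdot\by}(\varphi-\varphi_N)(\by)\,\rd s(\by)$; Cauchy--Schwarz with a unimodular kernel then gives $\|F-F_N\|_{L^\infty(\mathbb{S}^1)}\le k\,|\Gamma|^{1/2}\,\|\varphi-\varphi_N\|_{L^2(\Gamma)}$, from which Step 1 yields the third bound.

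The main obstacle is the careful $k$-bookkeeping in Step 1: one has to be sure that the continuity constant of $\scA_k$ on $L^2(\Gamma)$, the scaling factor $k^{-1}$ from \eqref{eqn:varphi_nonconvex}, and the $k^{1/2+\alpha}\log^{1/2}(2+k)$ factor from Theorem \ref{thm:nonconvex_best} collapse to the advertised $k^\alpha\log^{1/2}(2+k)$. Because Theorem \ref{thm:nonconvex_best} has been set up to match the convex Theorem \ref{thm:convex_best} in its $k$-dependence, and because the coercivity constant of $\scA_k$ is the same $c/2$ in both settings, the downstream estimates transfer verbatim from the convex argument of \cite{HeLaMe:11}.
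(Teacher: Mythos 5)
Your overall strategy --- coercivity of $\scA_k$ on $L^2(\Gamma)$ for star-shaped $\Omega_-$, C\'ea's lemma, the best approximation bound of Theorem~\ref{thm:nonconvex_best}, and then insertion of the approximate Neumann data into the representation formula \eqref{eq:grt_ss} and the far-field integral \eqref{eqn:FFP} --- is exactly the route the paper takes (the paper itself only sketches this, deferring all details to \cite[Theorems~61--63]{NonConvex}). Your Step~1 and your far-field estimate are correct and the $k$-bookkeeping there checks out: a continuity constant of order $k^{1/2}$ times the best approximation error $k^{-1/2+\alpha}\log^{1/2}(2+k)\re^{-p\tau}$ for $\varphi$ gives $k^{\alpha}\log^{1/2}(2+k)\re^{-p\tau}$, and multiplying by $k|\Gamma|^{1/2}$ gives the stated bound for $F-F_N$.

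The gap is in the domain ($L^\infty$) estimate. Your chain gives $\|u-u_N\|_{L^\infty(\Omega_+)}\leq k\cdot Ck^{-1/2}\log^{1/2}(2+k)\cdot Ck^{\alpha}\log^{1/2}(2+k)\re^{-p\tau}=Ck^{1/2+\alpha}\log(2+k)\re^{-p\tau}$, and dividing by a $k$-independent lower bound on $\|u\|_{L^\infty(\Omega_+)}$ leaves $k^{1/2+\alpha}\log(2+k)$. Since $\alpha\in(1/2,1)$, the exponent $1/2+\alpha$ strictly exceeds $1$, so this does \emph{not} deliver the advertised $k\log(2+k)$. What you are missing is that the denominator $\|u\|_{L^\infty(\Omega_+)}$ is not there merely to make the error relative: it cancels the factor $\mathcal{M}:=\sup_{\bx}|u(\bx)|$ that propagates through the regularity estimates (cf.\ the explicit $\mathcal{M}$ in Theorem~\ref{thm:31130812}; the $k^{3/2}\log^{1/2}(2+k)$ in \eqref{eqn:vjpmbound} is $C\mathcal{M}k$ with $\mathcal{M}$ replaced by its worst-case bound $Ck^{1/2}\log^{1/2}(2+k)$). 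Keeping $\mathcal{M}$ explicit, the Galerkin error is bounded by $C\mathcal{M}k^{\alpha-1/2}\re^{-p\tau}$ and hence $\|u-u_N\|_{L^\infty(\Omega_+)}\leq C\mathcal{M}k^{\alpha}\log^{1/2}(2+k)\re^{-p\tau}$; dividing by $\|u\|_{L^\infty(\Omega_+)}=\mathcal{M}$ then yields $Ck^{\alpha}\log^{1/2}(2+k)\re^{-p\tau}\leq Ck\log(2+k)\re^{-p\tau}$ as claimed. By first substituting the upper bound for $\mathcal{M}$ into the numerator and only afterwards dividing by a constant, you forfeit a factor of order $k^{1/2}\log^{1/2}(2+k)$ that this cancellation is designed to recover.
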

Our approximations $u_N$ and $F_N$ to the solution in the domain and the far field pattern are constructed exactly as for the convex case, as outlined above.  As for the convex polygonal case, our conclusion is again that $N$ proportional to $p^2$ growing like $\log^2 k$ as $k$ increases provably maintains accuracy.

We present numerical results for the two cases shown in Figure~\ref{fig:nc_phenom}.  These examples have also been studied in~\cite{NonConvex}, but our results here are new, as detailed below.  The nonconvex sides of the scatterer have length $2\pi$ and the convex sides have length $4\pi$, so the total length of the boundary is $12\pi$, which is $6k$ wavelengths (recalling that the wavelength is $\lambda=2\pi/k$).  In our computations we again choose $n=2(p+1)$, so that for this example the total number of degrees of freedom is $N=12p^2+28p+16$.  Since $N$ depends only on $p$, we again adjust our notation by defining $\psi_p(s):=\varphi_N(s)$, and we take our ``reference'' solution to be $\psi_7$.  In Figure~\ref{fig:nc_errors} we plot on a logarithmic scale the relative $L^2$ errors
\begin{align*}
\frac{\left\|\psi_7-\psi_p\right\|_{L^2\left(\Gamma\right)}}{\left\| \frac{1}{k}\partial_\nu^+ u \right\|_{L^2\left(\Gamma\right)}},
\end{align*}
against $p$ for a range of values of $k$.  As for the convex polygon, we scale the relative error by the quantity that we are actually trying to approximate, namely $(1/k)\partial_\nu^+ u = \varphi+\Psi^d/k \approx \psi_7 + \Psi^d/k$ (\ref{eqn:varphi_nonconvex}).  This is in contrast to results in~\cite{NonConvex}, where the relative errors were scaled by $\|\psi_7\|_{L^2(\Gamma)}$, i.e.\ by the ``reference'' solution to the BIE, which is only a component of the quantity we are seeking to approximate (compare Figure~\ref{fig:nc_errors} with~\cite[Fig.7 and Table 1]{NonConvex}).  As for the convex polygon, the $L^2$ norms are again calculated using a high-order Gaussian quadrature routine on a mesh graded towards the endpoint singularities.
\begin{figure}[htbp]
\begin{minipage}{0.5\linewidth}
\centering
\includegraphics[scale=0.33]{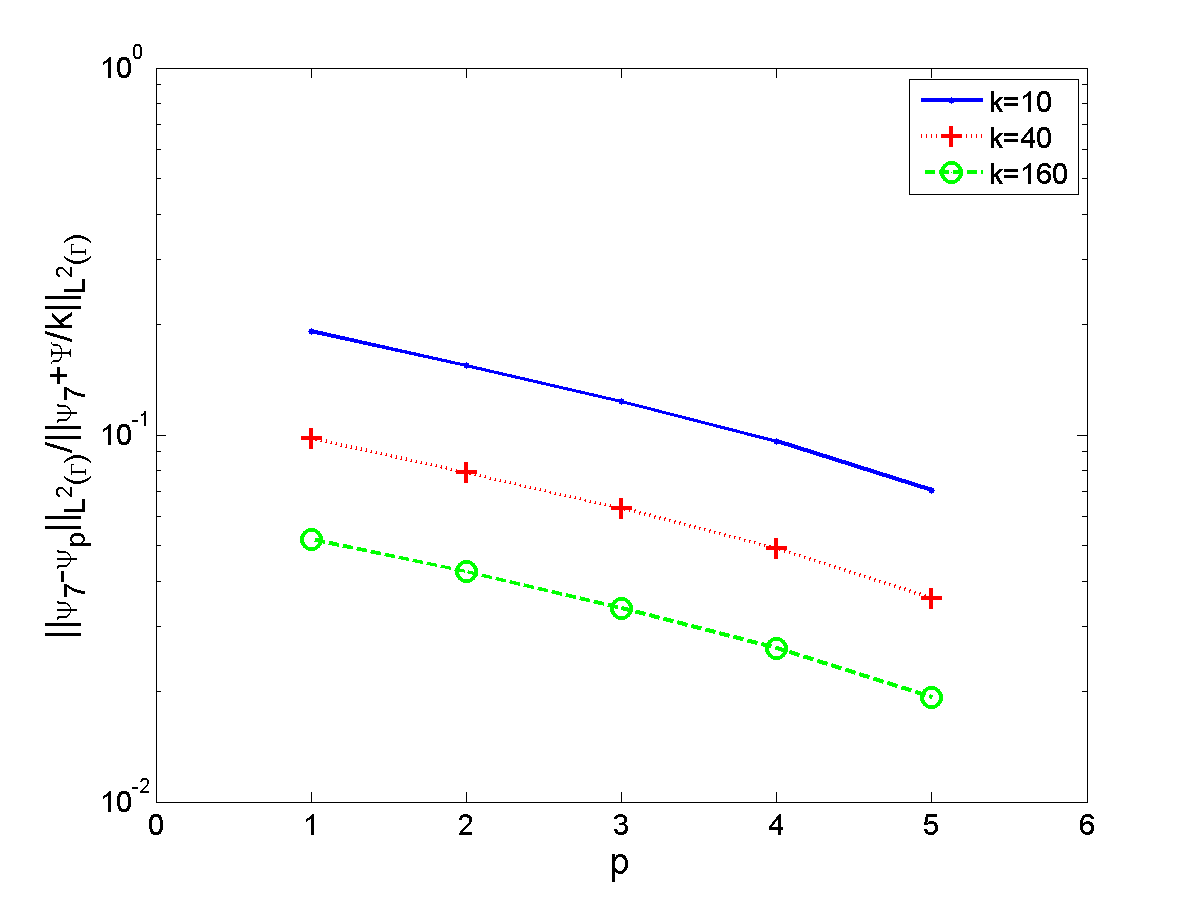}
\end{minipage}%
\begin{minipage}{0.5\linewidth}
\centering
\includegraphics[scale=0.33]{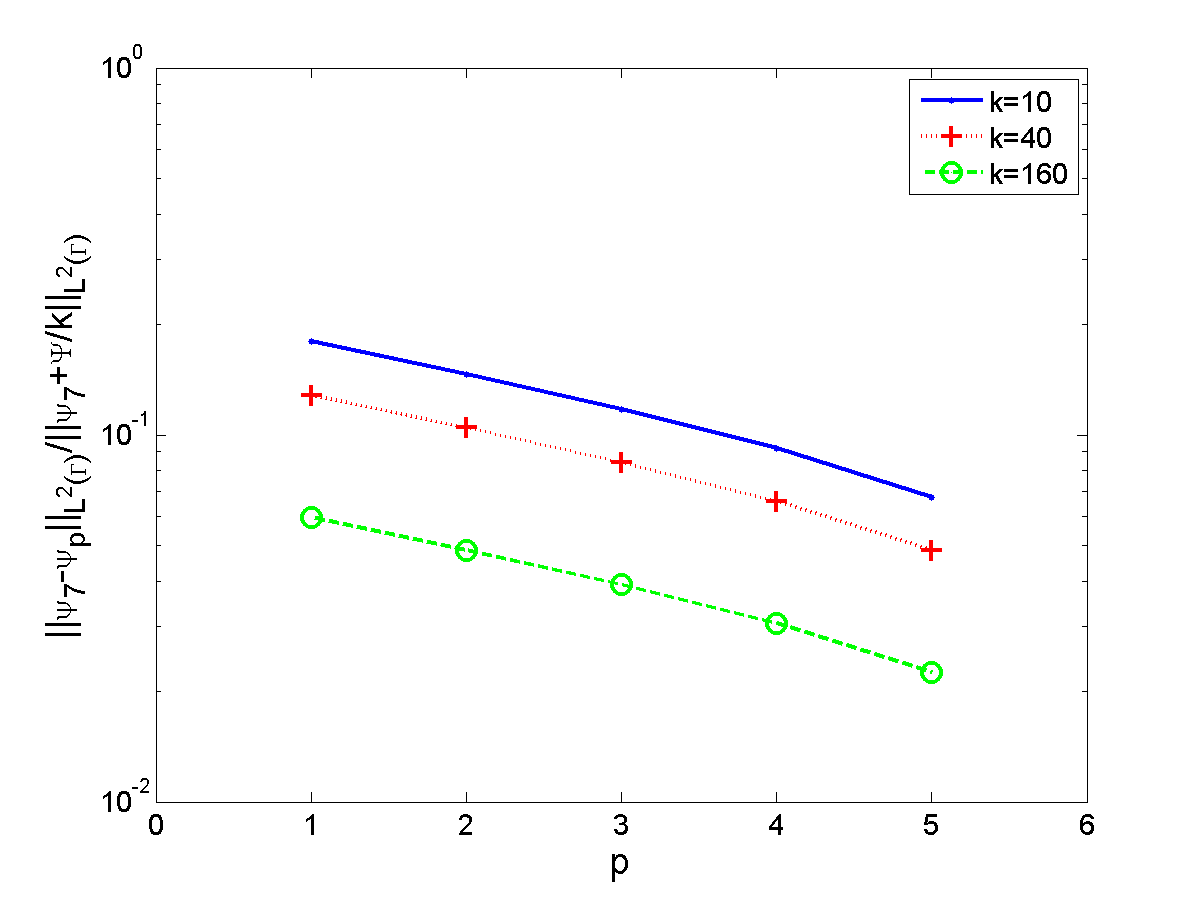}
\end{minipage}
\caption{Relative $L^2$ errors, scattering by a nonconvex polygon, with partial illumination (left) and rereflections (right), as in Figure~\ref{fig:nc_phenom}.}
\label{fig:nc_errors}
\end{figure}
For fixed $k$, as $p$ increases the error decays exponentially, as predicted by Theorem~\ref{thm:nonconvex_approx}.  For fixed $p$, the error seems to decrease as $k$ increases.


\subsubsection{Transmission scattering problems} Finally we consider the transmission scattering problem~(\ref{prob:tsp}), for which the behaviour is more complicated still, incorporating as it does multiple internal rereflections.  To illustrate this, consider the scenario illustrated in Figure~\ref{fig:trans1}, in which we show an incident wave striking a penetrable polygonal scatterer.
\begin{figure}[htbp]
\begin{minipage}{0.5\linewidth}
\centering
\includegraphics[scale=0.38]{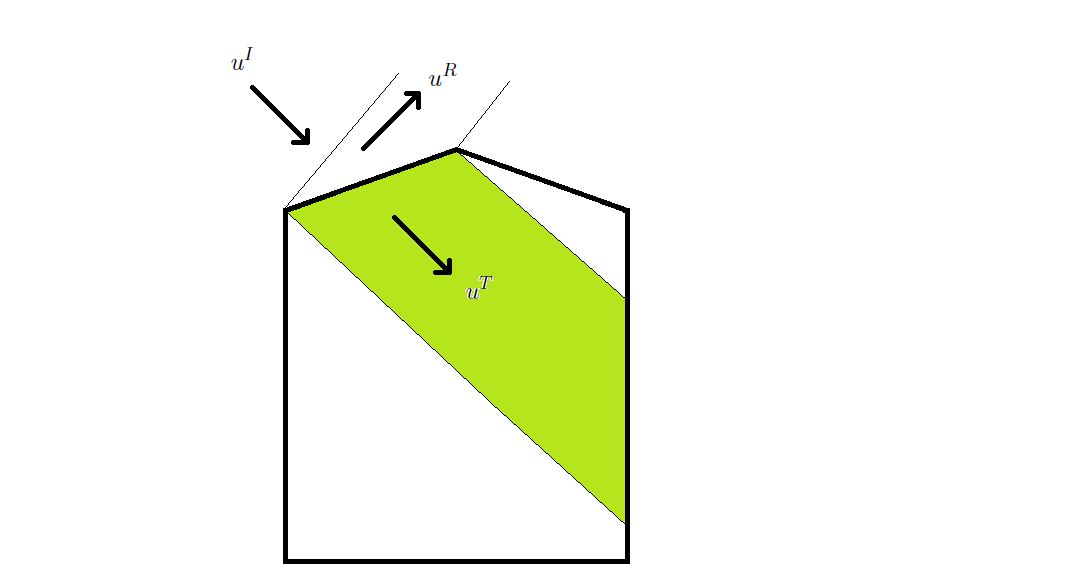}
\end{minipage}%
\begin{minipage}{0.5\linewidth}
\centering
\includegraphics[scale=0.38]{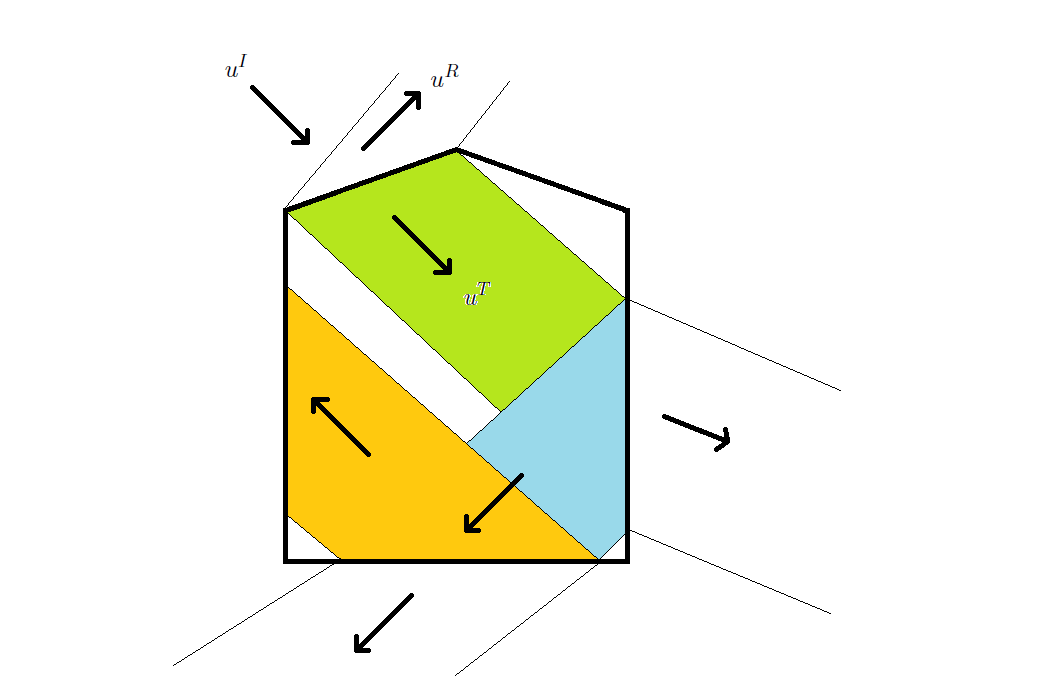}
\end{minipage}
\caption{Illustration, for the transmission scattering problem, of incident ($u^I$), reflected ($u^R$) and primary transmitted ($u^T$) field (left), with multiple internal rereflections (right)}
\label{fig:trans1}
\end{figure}
On the left of Figure~\ref{fig:trans1} we show what happens as the incident wave $u^I$ strikes one side of the boundary $\Gamma$.  Following Snell's Law (see, e.g., \cite[Appendix~A]{GrHeLa:13}), this gives rise to a ``reflected'' wave $u^R$ (travelling from $\Gamma$ into $\Omega_+$) and a ``transmitted'' wave $u^T$ that passes into $\Omega_-$.  For the case that the scatterer is impenetrable (as in all the other problems considered earlier in this section), only the reflected wave would be present here.  As the transmitted wave passes through $\Omega_-$ it may decay (if $\im{k_-}>0$), but if $k_-$ is constant then its direction does not change.  As this transmitted wave strikes $\Gamma$ it again gives rise to another transmitted wave (that now passes through $\Gamma$ into $\Omega_+$), and another reflected wave (which passes through $\Omega_-$ again, in a new direction).  This reflected wave again may lose energy as it passes through $\Omega_-$, but again it will strike $\Gamma$, leading to further reflected and transmitted waves, as shown on the right of Figure~\ref{fig:trans1}.  These multiple internally reflected waves are not present for impenetrable scatterers, and make the task of designing a hybrid approximation space (based on the ansatz~(\ref{eqn:ansatz})) extremely challenging.

We restrict attention here to the case of convex polygonal scatterers, and outline briefly the approach presented in~\cite{GrHeLa:13}.  For nonconvex scatterers or for scatterers with curved surfaces the problem would be significantly harder, and we do not discuss such generalisations.  Utilising Snell's law, we can write down explicit formulae for each term in the (in general infinite) series of ``rereflected'' waves (the first three of which are illustrated on the right of Figure~\ref{fig:trans1}).  We refer to~\cite{GrHeLa:13} for details, but note that these formulae rely on a complete understanding of what happens when a plane wave passes from one (possibly absorbing) homogeneous medium to another, and further that there appear to be some misconceptions in the literature regarding the solution to that canonical problem, which are addressed fully in~\cite[Appendix~A]{GrHeLa:13}.  Framing this in the context of~(\ref{eqn:ansatz}), this corresponds to expressing $V_0$ as an infinite series of these ``rereflected'' waves, that must be truncated in any numerical algorithm.

The part of the total field that is not represented by this series corresponds primarily to the ``diffracted'' waves emanating from the corners of the polygon (though other wave components, e.g.\ lateral waves, may also be present; again we refer to the discussion in~\cite{GrHeLa:13} for details).  These may originate from the incident field (e.g., as illustrated on the left of Figure~\ref{fig:trans2} below), or else they may originate from the waves that travel through the interior of the polygon striking corners that are on the ``shadow'' side of the polygon (using the same definition as for impenetrable scatterers above).  Either way these ``diffracted'' waves travel through $\Omega_+$ with speed determined by $\real{k_+}$, and through $\Omega_-$ with speed determined by $\real{k_-}$ (and decay determined by $\im{k_-}$).  These differing wavespeeds in $\Omega_-$ and $\Omega_+$ of course imply differing wavelengths, as illustrated in Figure~\ref{fig:trans2}.  A full consideration of the wave behaviour of the solution would also need to take into account reflection of these waves from each side of $\Gamma$.  We do not consider such rereflections of diffracted waves here.
\begin{figure}[htbp]
\begin{minipage}{0.5\linewidth}
\centering
\includegraphics[scale=0.38]{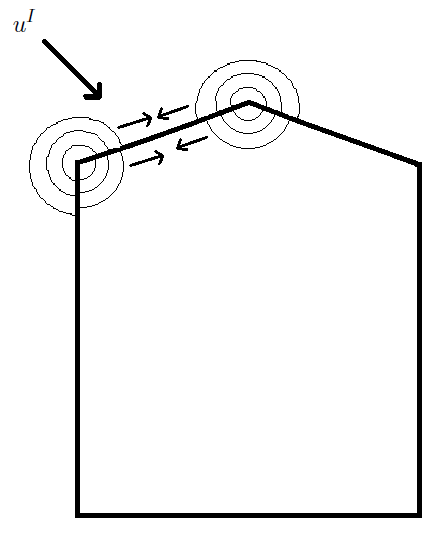}
\end{minipage}%
\begin{minipage}{0.5\linewidth}
\centering
\includegraphics[scale=0.38]{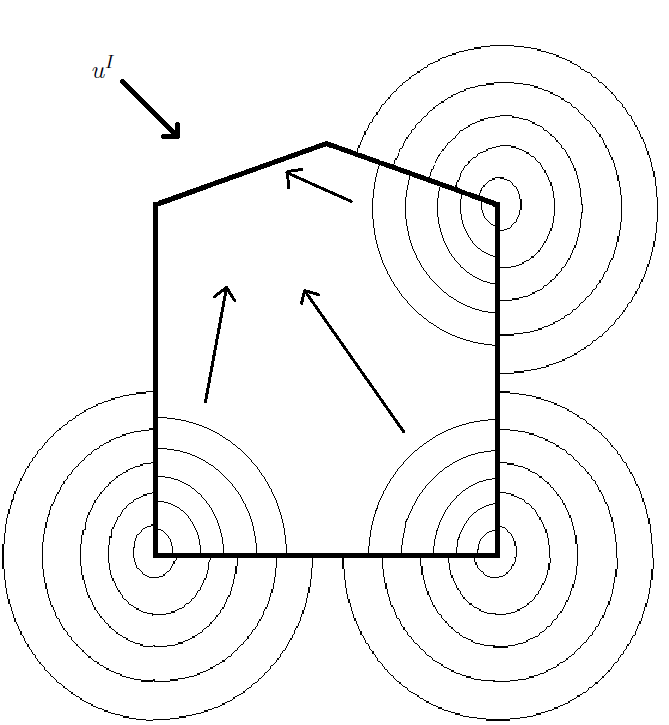}
\end{minipage}
\caption{Illustration of waves ``diffracted'' by the corners of a particular side of $\Gamma$ (left), and by other corners of the polygon (right)}
\label{fig:trans2}
\end{figure}

  We turn now to the integral equation formulation for the solution of the transmission scattering problem \eqref{prob:tsp}. We need to solve \eqref{eq:obvious} for $v = \left[\gamma_+u, \partial_\nu^+u\right]^T\in H^1(\Gamma)\times L^2(\Gamma)$, with $A$ given by \eqref{eq:BIE_trans} and $f=\left[ u^I|_\Gamma, \partial_\nu u^I|_\Gamma\right]$.  The HNA ansatz for $v$ then needs to incorporate the behaviour shown in Figures~\ref{fig:trans1} and~\ref{fig:trans2}.  On any given side $\Gamma_j$ (of length $L_j$) of the polygon, with $s$ representing distance from $P_{j-1}$, we represent the solution as
\begin{eqnarray}
  v(s) & = & v_0(s) + v_+^+(s) \re^{\ri k_+ s} + v_+^-(L_j-s) \re^{-\ri k_+ s} + v_-^+(s) \re^{\ri k_- s}  \nonumber \\
       &   & \quad + v_-^-(L_j-s) \re^{-\ri k_- s} + \sum_{m=1}^{n_s-2} v_m(s) \re^{\ri k_- r_m(s)}, \label{eqn:trans_ansatz}
\end{eqnarray}
where here: $v_0$ represents a (known) truncated series of ``rereflected'' plane waves (referred to as ``beams'' in~\cite{GrHeLa:13}), i.e.\ an approximation to the behaviour shown in Figure~\ref{fig:trans1}; $v_{\pm}^{\pm}(\cdot)\re^{\pm\ri k_\pm s}$ represents ``diffraction'' along $\Gamma_j$ from ``adjacent corners'', i.e.\ the behaviour as shown on the left of Figure~\ref{fig:trans2}, where here the phase functions $\re^{\pm\ri k_\pm s}$ capture the oscillations of these ``diffracted'' waves, but the amplitudes $v_{\pm}^{\pm}$ are unknown, and are approximated by piecewise polynomials on overlapping graded meshes (due to singularities at the corners) as for the sound-soft convex polygonal scattering problem described above; finally $v_m(s) \re^{\ri k_- r_m(s)}$ represents ``diffraction'' on $\Gamma_j$ emanating from non-adjacent corners, with here $r_m(s)$ denoting the distance from the $m$th corner to the point $x(s)$ parametrised by $s$ on $\Gamma_j$, i.e.\ the behaviour as shown on the right of Figure~\ref{fig:trans2}, where there the phase functions $\re^{\ri k_- r_m(s)}$ capture the oscillations of these ``diffracted'' waves, but the amplitudes $v_m$ are unknown, and are each approximated by piecewise polynomials (of order $p$) on a mesh on $\Gamma_j$, with mesh points lined up with potential discontinuities in $v_m$ arising from our sharp ``cutting off'' of the beams, as shown in Figure~\ref{fig:trans1} (for full details, see~\cite[\S3.2.3]{GrHeLa:13}).

This scheme is rather more complicated than the corresponding ones for impenetrable scatterers, but numerical results in~\cite[\S4]{GrHeLa:13} suggest that the best approximation achieved by this HNA approximation space has a relative error that decreases as $k$ increases, and further that represents a significant improvement over using $v_0$ (essentially the ``Geometrical Optics'' solution) on its own.

Here, we compare results from \cite[Table~1]{GrHeLa:13} with those presented in Table~1 above for the problem of sound soft scattering by an impenetrable convex polygon.  We consider the same geometry as described above for the case of scattering by a sound soft convex polygon, i.e.\ an equilateral triangle of side length $2\pi$, but now we allow the wave to pass through the triangle, as illustrated in Figure~\ref{fig:tri_trans}.
\begin{figure}[ht]
\begin{center}
  \includegraphics[width=0.75\linewidth]{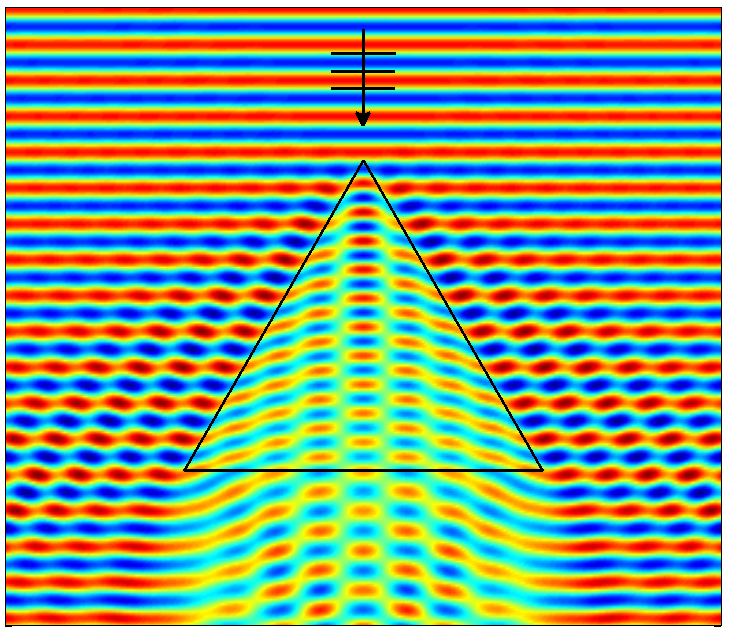}
\end{center}
\caption{Real part of the total field for scattering of a plane wave by a penetrable equilateral triangle.}
\label{fig:tri_trans}
\end{figure}
In the results below we take the refractive index of the media to be 1.31, so that for any given exterior wavenumber $k_+$, the interior wavenumber is $k_-=k_+(1.31+\xi\ri)$, where $\xi$ determines the level of absorption.  We fix $p=4$, in which case the total number of degrees of freedom required by the approximation space outlined above is 193 (compared to 192 for $p=3$ for the sound-soft convex polygon example, presented above).  In Figure~\ref{fig:trans_err} we plot the relative error in best approximation to both $u$ and $\partial u/\partial\nu$ on $\Gamma$, as computed with 193 degrees of freedom, for $\xi=0$ (zero absorption), for $\xi=0.025$ and for $\xi=0.05$.  The best approximation is computed via matching with a known ``reference'' solution computed using standard BEM on a very fine mesh - see~\cite{GrHeLa:13} for details.  For comparison, on each plot we also show the relative error in the HNA BEM solution to the problem of scattering by the sound soft convex polygon, as computed with 192 degrees of freedom (i.e.\ some of the results from Table~\ref{table:tri_errors}).
\begin{figure}[htbp]
\begin{minipage}{0.5\linewidth}
\centering
\includegraphics[scale=0.34]{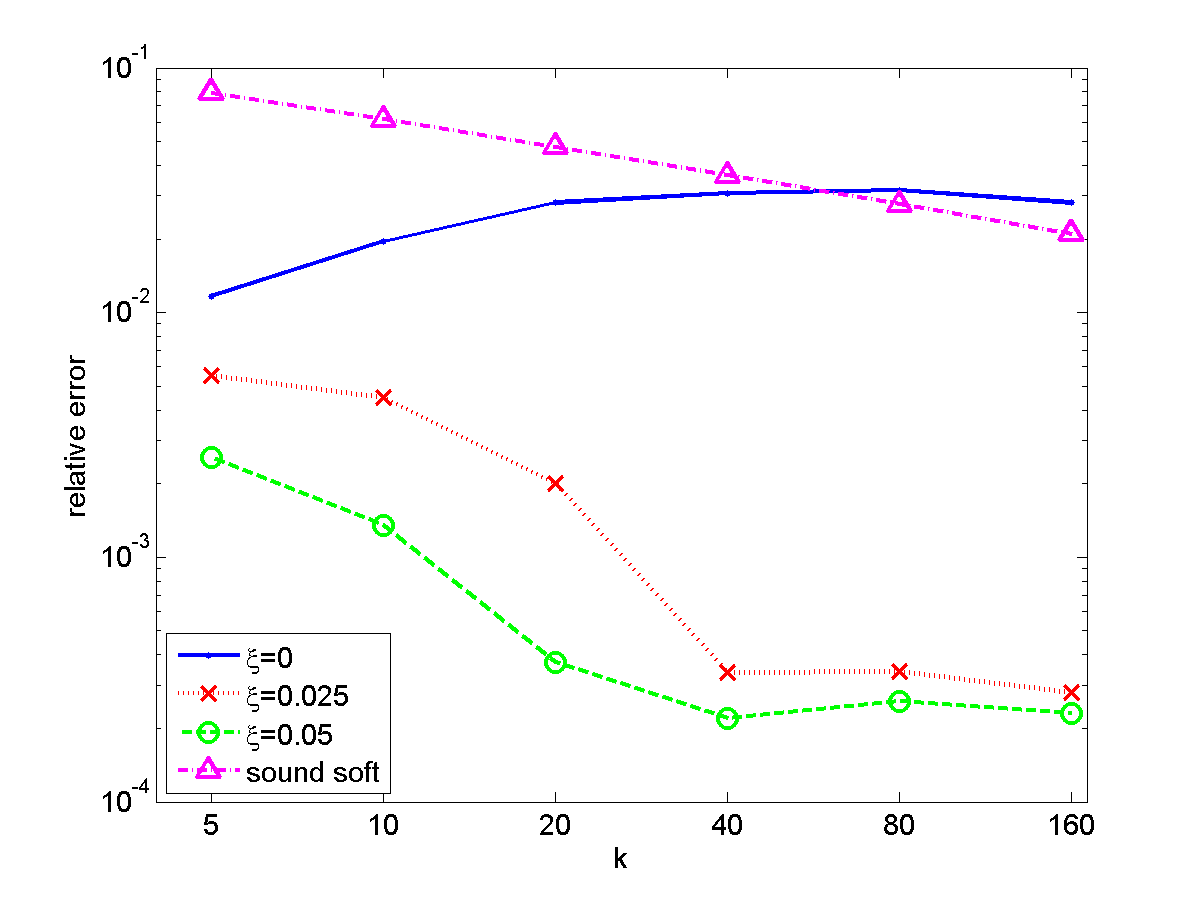}
\end{minipage}%
\begin{minipage}{0.5\linewidth}
\centering
\includegraphics[scale=0.34]{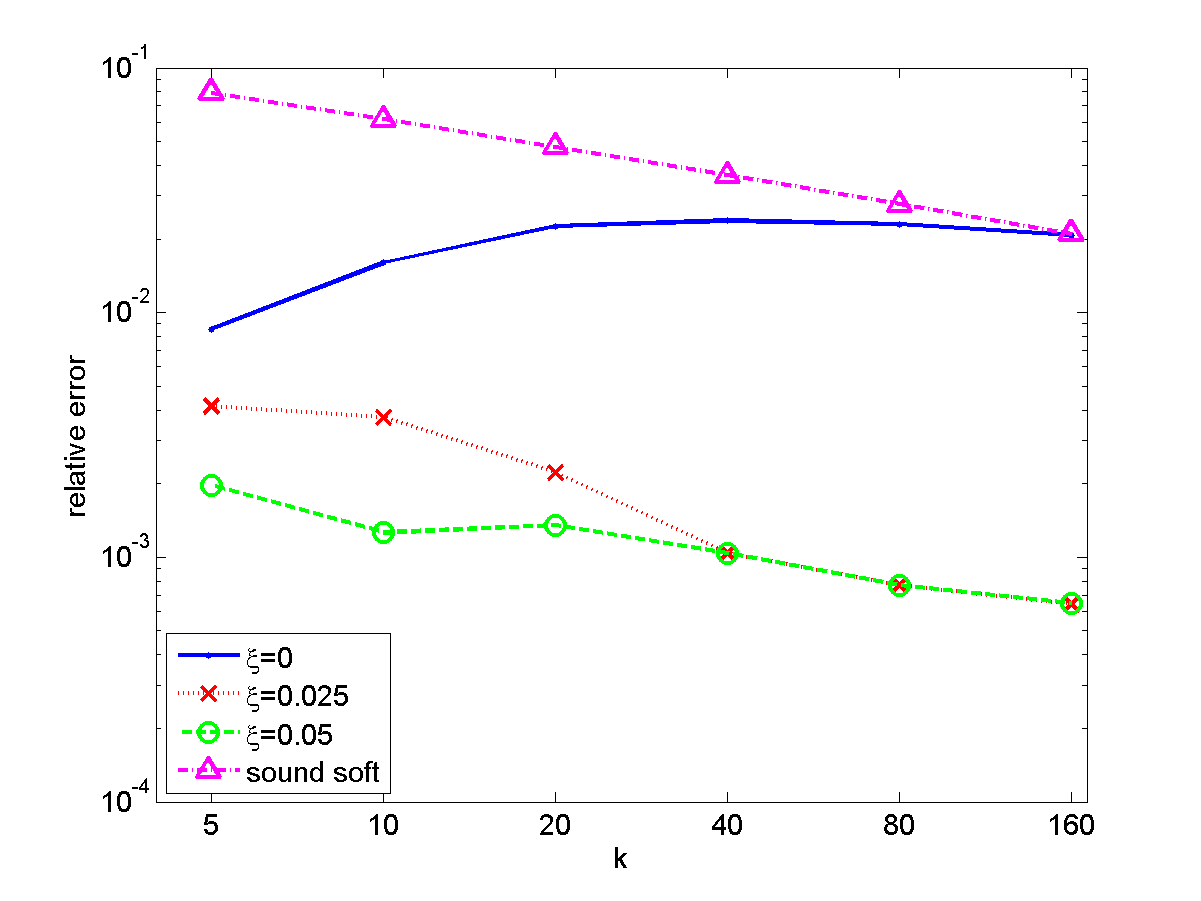}
\end{minipage}
\caption{Relative $L^2(\Gamma)$ best approximation errors in $u$ (left) and $\partial u/\partial\nu$ (right) for scattering by a penetrable triangle of varying absorption, and a comparison with the HNA BEM relative errors for scattering by a sound soft polygon.}
\label{fig:trans_err}
\end{figure}
The comparison here is not completely fair - for the sound soft problem we have computed the solution by solving the BIE using an HNA approximation space, whereas for the transmission problem we have merely fitted the best approximation from an HNA approximation space, and there is no guarantee that solving the BIE numerically would achieve this best approximation.  Also, in solving the sound soft problem, the relative error is in our approximation to $\partial u/\partial\nu$, and we compare that result here with the best approximation to both $u$ and $\partial u/\partial\nu$ for the transmission problem.  However, having said all that, we note that, except for the most challenging case of zero absorption, the error in best approximation from the HNA approximation space for the transmission problem is comparable or better than the HNA BEM solution for the much simpler problem of scattering by a sound soft convex polygon.

Although in this case there is no rigorous theory, unlike for the simpler cases of the screen and the impenetrable polygon, the results in Figure~\ref{fig:trans_err} (and see~\cite{GrHeLa:13} for a much wider range of examples) demonstrate that the HNA BEM has the potential to be successful even for this significantly more complicated scenario.  Further results in~\cite{GrHeLa:13}, also for the calculation of the solution in the domain and the far field pattern, show that good results can be achieved for a range of scatterers with different absorptions; as absorption reduces, so the influence of diffraction from non-adjacent corners increases, and we surmise that, in this case, it may be necessary to add additional terms to the ansatz~(\ref{eqn:trans_ansatz}) in order to achieve higher levels of accuracy.  We note though that results in~\cite{GrHeLa:13} suggest that the ansatz outlined above is sufficient to achieve 1\% relative error in the far-field pattern for any absorption and frequency (for the range of examples tested).

\subsubsection{Other boundary conditions and 3D problems}
We have focussed mostly in this section on sound soft scattering problems. There is no difficulty in extending the algorithms and much of the analysis to sound hard or impedance scattering problems. In particular, the HNA approach has been very successfully applied to the problem of scattering by convex polygons with impedance boundary conditions (see \cite{CWLM} for details), solving \eqref{eq:BIE_imp2} with $\eta=0$.  We do not include specific details of that case here, but note that the approximation space is very similar to that for scattering by sound-soft convex polygons, as detailed above, and also that a summary of the approach in that case and further numerical results can be found in~\cite{ChGrLaSp:12}.

Much more challenging is extension to 3D, because of the greater complexity of the high frequency asymptotics of the solution, in particular the much larger number of possible contributing ray paths and associated oscillatory phases. But at least for significant classes of 3D problems it seems likely that this methodology will be effective, leading to substantially reduced computational cost compared to standard BEMs.

As a starting point, we reported in \cite[\S7.6]{ChGrLaSp:12} initial numerical experiments by Hewett for scattering in 3D by a sound-soft square screen (the 3D version of the problem tackled in \S\ref{sec:screens}), exploring whether the high frequency ansatz \eqref{eqn:ansatz} is able to approximate the exact solution with a small number of degrees of freedom, experimenting with different choices for the phases $\phi_m$ and the number $M$ of oscillatory phases included. In work in progress this ansatz has been implemented in a Galerkin scheme analogous to that in \S\ref{sec:screens}~\cite{HaHeLaLa:14}. These results suggest that the best approximation results for certain 3D problems are broadly achieved by HNA BEM in practice, and that inclusion of appropriate oscillatory basis functions in the approximation space as outlined here can lead to high accuracy at reasonably high frequencies with a relatively small number of degrees of freedom, certainly compared to standard BEM, even in the 3D case.

\section{Links to the unified transform method}
\label{sec:uni}
As noted in the introduction, this paper appears in a collection of articles in significant part focussed on  the so-called ``unified transform'' or
``Fokas transform'' introduced by Fokas in 1997 \cite{Fo97} and developed further by Fokas and
collaborators since then (see, e.g., \cite{DeTrVa14} and the other papers in this collection). This method is on the one hand an analytical transform technique which can be employed to solve linear BVPs with constant coefficients in canonical domains, for which it serves as a generalisation of classical transform and separation of variables techniques (for a high frequency application, to acoustic scattering by a circular domain, see  \cite{FoSp12}). But also this method, when applied in general domains, has some aspects in common with BIE and boundary element methods, as discussed by Spence \cite{Spence14} in this volume.

The investigation of the unified transform method as a numerical scheme for the Helmholtz equation is arguably in its infancy. The focus to date has been on numerical experiments for particular geometries for the 2D interior Dirichlet problem \eqref{prob:idp}, and general methodologies and associated theoretical numerical analysis results are limited to date.    Spence \cite[\S10.3]{Spence14} in this volume reviews these developments in detail, proves some new theoretical convergence results for one particular proposed implementation, and makes connections with boundary integral equation and boundary element methods, and other numerical schemes. We add to this discussion in \S\ref{sec:unidp} below, in particular pointing out that one particular implementation (see Theorem \ref{thm:unified} below), approximating the unknown Neumann data from a space of traces of plane waves, computes precisely the best approximation from that space. We also make additional connections to related methods: the {\em least squares method} and the {\em method of fundamental solutions}: see Remarks \ref{rem:ls} and \ref{rem:mfs}.

The unified transform method, as articulated in \S\ref{sec:unidp} below and in \cite{SpFo:10,Spence14}, does not apply to exterior problems for the Helmholtz equation, at any rate to exterior problems set in the exterior of a bounded set $\Omega_-$, such as the exterior Dirichlet problem \eqref{prob:edp}. The issue is that plane wave (and generalised plane wave) solutions of the Helmholtz equation, which are fundamental to the method (see \S\ref{sec:unidp}) do not satisfy the standard Sommerfeld radiation condition \eqref{eqn:SRC} (for more discussion see \S\ref{sec:undg} below, and note that \cite{FoLe:14} {\em does} achieve an implementation for a particular exterior problem for the {\em modified} Helmholtz equation, i.e., \eqref{eqn:HE} with $k$ pure imaginary). But the unified transform method can be applied to so-called {\em rough surface scattering problems}, where the scatterer takes the form
\begin{equation} \label{eq:halfspace}
\Omega_- := \{ \bx = (\tilde \bx, x_d)\in \R^d: x_d < f(\tilde \bx)\},
\end{equation}
for some bounded, Lipschitz continuous function $f:\R^{d-1}\to\R$ so that $\Gamma$ is the graph of $f$ and the boundary value problem to be solved is posed in the perturbed half-space $\Omega_+:= \R^d \setminus \overline{\Omega_-}$. Generalised plane waves (as defined in \S\ref{sec:unidp}) that propagate upwards or decay in the vertical direction satisfy the appropriate radiation conditions in this case: in 2D these are the so-called {\em Rayleigh expansion radiation condition}, \eqref{eq:rerc} below, in the case when $f$ is periodic, and the {\em upwards propagating radiation condition} \cite{HalfPlaneRep} more generally.

Not only can the unified transform method be applied to these rough surface scattering problems, {\em it already has been applied in these cases}, developed independently in papers by DeSanto and co-authors from 1981 onwards \cite{DeSanto:81,DeSaErHeMi:98,DeSaErHeMi:01,DeSaErHeKrMiSw:01,ArChDeSa:06}. In \S\ref{sec:undg} below, we recall this method for the simplest of these problems, the 2D sound soft scattering problem \eqref{prob:ssp} in the particular case when the scatterer  $\Omega_-$ is a one-dimensional {\em diffraction grating} by which we will mean that $d=2$ and $f:\R\to \R$ is periodic (this case considered in particular in \cite{DeSaErHeMi:98,ArChDeSa:06}). We point out that the so-called {\em spectral-coordinate (SC)} and {\em spectral-spectral (SS)} methods proposed in \cite{DeSaErHeMi:98} correspond to two implementations of the unified transform methods with different choices of approximation space. We note that the $SS^*$ method proposed in \cite{ArChDeSa:06}, a variant of the SS method, corresponds precisely to the method for the interior Dirchlet problem \eqref{prob:idp} analysed in Theorem \ref{thm:unified} below, and we prove a new result (Theorem \ref{thm:unified2}) characterising and proving convergence of this method, sharpening \cite[Lemma 4.1]{ArChDeSa:06}. We also discuss the conditioning of the linear systems that arise from these methods.

\subsection{The unified transform method for the interior Dirichlet problem} \label{sec:unidp}
At the heart of the unified transform method is the so-called {\em global relation}. For linear elliptic PDEs with constant coefficients this global relation follows from the divergence theorem.  In particular, as described in \cite{SpFo:10,Spence14} for the Helmholtz equation \eqref{eqn:HE} (and the Laplace and modified Helmholtz equations), applying Green's second theorem to a solution $u\in H^1(D)$ of \eqref{eqn:HE} and a function $v\in \mathcal{R} := \{ v\in H^1(D)\cap C^2(D): \Delta u + k^2u=0 \mbox{ in } D\}$  in a bounded two-dimensional Lipschitz domain $D$ gives that
\begin{equation} \label{eq:gl}
\int_\Gamma \partial_\nu u \overline{\gamma v} \rd s = \int_\Gamma \gamma u \overline{\partial_\nu v} \rd s, \mbox{ for all } v\in \mathcal{R}.
\end{equation}
We remark that the lower part of \eqref{eq:grt_int}, i.e. \eqref{eq:grt_int} with $\bx\in \Omega_+$, is precisely a particular instance of \eqref{eq:gl}.

For \eqref{eqn:HE} with $k>0$, the {\em global relation} \cite{SpFo:10,Spence14} is \eqref{eq:gl} restricted to the subset $\mathcal{P} \subset \mathcal{R}$ of separable solutions of \eqref{eqn:HE} in Cartesian coordinates. In  two dimensions $\mathcal{P}$ is the one-parameter family $\mathcal{P} = \{v(\cdot,\theta): \theta\in \C\}$, where
$v(\bx,\theta) := \exp(\ri k(\cos \theta \,x_1 + \sin\theta \, x_2))$. For $\theta\in \R$, $v(\cdot,\theta)$ is a plane wave travelling in the direction $\hat \ba = (\cos\theta,\sin \theta)$. For complex $\theta$ we term $v(\cdot,\theta)$ a {\em generalised} plane wave (often alternatively an {\em evanescent} or {\em inhomogeneous} plane wave). The following lemma notes linear independence and density properties of $\mathcal{P}$.

\begin{lem} \label{lem:dense}
Suppose that $-k^2$ is not a Dirichlet eigenvalue of the Laplacian in $D$. Then:

(i) For $N\in \N$, $\gamma v(\cdot, \theta_1),...,\gamma v(\cdot,\theta_N)$ are linearly independent if $\theta_1,...,\theta_N\in\C$ are distinct.

(ii) The linear span of $\{\gamma v(\cdot,\theta):0\leq \theta<2\pi\}$ is dense in $L^2(\Gamma)$.
\end{lem}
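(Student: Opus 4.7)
The overall strategy for (i) is to reduce the alleged linear-dependence relation on $\Gamma$ to a vanishing identity for an entire solution of the Helmholtz equation on $\R^2$, using the non-resonance hypothesis plus real-analyticity, and then to exploit a Fourier/Jacobi--Anger decomposition to extract the $c_j$. For (ii), since $L^2(\Gamma)$ is a Hilbert space, density is equivalent to triviality of the orthogonal complement; the plan is to associate to each orthogonal $\phi$ a single-layer potential whose far-field pattern equals (up to a nonzero constant) the pairing of $\phi$ with plane waves, and then kill it via Rellich's lemma and the jump/continuity relations for $\mathcal{S}_k$ together with a second application of non-resonance.

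For (i), I would start from $\sum_{j=1}^N c_j \gamma v(\cdot,\theta_j)=0$ and set $w(\bx):=\sum_j c_j v(\bx,\theta_j)$, which is an entire solution of \eqref{eqn:HE} on $\R^2$ with $\gamma w=0$. Since $-k^2$ is not a Dirichlet eigenvalue, uniqueness for \eqref{prob:idp} gives $w\equiv 0$ on $D$, and real-analyticity of solutions of \eqref{eqn:HE} extends this to $w\equiv 0$ on $\R^2$. To then recover the $c_j$, I would expand each plane wave via the Jacobi--Anger formula $\re^{\ri kr\cos(\phi-\theta)}=\sum_{n\in\Z}\ri^n J_n(kr)\re^{\ri n(\phi-\theta)}$ in polar coordinates $(r,\phi)$, obtaining $\sum_{n\in\Z}\ri^n J_n(kr)\re^{\ri n\phi}\bigl(\sum_j c_j \re^{-\ri n\theta_j}\bigr)\equiv 0$. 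Fourier orthogonality in $\phi$ together with the non-vanishing of $J_n$ on a set of positive measure in $r$ then forces $\sum_j c_j \re^{-\ri n\theta_j}=0$ for every $n\in\Z$. Writing $z_j:=\re^{-\ri\theta_j}$, these are distinct nonzero complex numbers (on the natural reading of ``distinct $\theta_j$'' as distinct modulo $2\pi$, which is needed anyway for $v(\cdot,\theta_j)$ themselves to be distinct), and a Vandermonde argument applied to $n=0,1,\dots,N-1$ yields $c_j=0$.

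For (ii), given $\phi\in L^2(\Gamma)$ with $\int_\Gamma \phi\,\overline{\gamma v(\cdot,\theta)}\,\rd s=0$ for every $\theta\in[0,2\pi)$, I would consider the single-layer potential $w:=\mathcal{S}_k\phi$ on $\R^2$ (well-defined since $L^2(\Gamma)\hookrightarrow H^{-1/2}(\Gamma)$). The standard large-$|\bx|$ asymptotic of $\Phi_k$ shows that the far-field pattern of $w$ equals a fixed nonzero constant times $\int_\Gamma \phi(\by)\re^{-\ri k\hat\bx\cdot\by}\,\rd s(\by)$; writing $\hat\bx=(\cos\theta,\sin\theta)$ and noting $\overline{v(\by,\theta)}=\re^{-\ri k\hat\bx\cdot\by}$ for real $\theta$, this is precisely the hypothesised orthogonality, so the far-field of $w$ vanishes identically. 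Rellich's lemma then gives $w\equiv 0$ on $\Omega_+:=\R^2\setminus\overline{D}$; the continuity of $\gamma \mathcal{S}_k$ across $\Gamma$ in \eqref{eq:S traces} gives $\gamma_- w=\gamma_+ w=0$; invoking the non-resonance hypothesis once more forces $w=0$ in $D$; and finally the jump $[\partial_\nu]\mathcal{S}_k\phi=-\phi$ from \eqref{eq:S traces} yields $\phi=0$, as required.

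\textbf{Main obstacle.} The chief delicate point is ensuring that Rellich's lemma, continuity of $\gamma\mathcal{S}_k$, and the normal-derivative jump relation all apply for $\phi\in L^2(\Gamma)$ on a general bounded Lipschitz boundary; these are available via the framework of \cite{McLean} and the summary in \S\ref{sec:BIE}, but they must be invoked with the correct mapping properties of $\mathcal{S}_k$ between the relevant fractional Sobolev spaces. A minor subtlety in (i) is that $\theta\mapsto(\cos\theta,\sin\theta)$ is not injective on $\C$ (e.g.\ $\cos\theta=\cos(-\theta)$), so a direct ``linear independence of $\re^{\ri\zeta_j\cdot\bx}$ for distinct $\zeta_j\in\C^2$'' route would require care in choosing a probe direction in $\R^2$ that separates the $\zeta_j\cdot\bx_0$; the Jacobi--Anger approach via the injectivity of $\theta\mapsto\re^{-\ri\theta}$ modulo $2\pi$ sidesteps this altogether.
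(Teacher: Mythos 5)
Your argument is correct, but it diverges from the paper's proof in both parts. For (i) the paper takes the same first step as you (non-resonance plus real-analyticity force the combination $w$ to vanish on all of $\R^2$), but then restricts $w$ and $\partial w/\partial x_2$ to the line $x_2=0$ and proves linear independence of the resulting one-dimensional exponentials $\re^{\ri \eta_m t}$ by an averaging argument (multiplying by $\re^{-\ri\eta_n t}$ and computing $\lim_{A\to\infty}\frac{1}{A}\int_A^{2A}$ to pick off the exponent of smallest imaginary part), with an explicit case analysis to cope with the fact that distinct $\theta_n$ modulo $2\pi$ can share the same $\cos\theta_n$ --- precisely the non-injectivity issue you flag. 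Your Jacobi--Anger/Fourier/Vandermonde route sidesteps that pairing-up entirely and is arguably cleaner; the one point you should make explicit is that the Jacobi--Anger expansion remains valid and termwise-extractable for complex $\theta_j$, which holds because $J_n(kr)$ decays super-exponentially in $|n|$ while $\re^{-\ri n\theta_j}$ grows at most exponentially, so the series converges locally uniformly in the angle and its Fourier coefficients can be read off term by term. For (ii) the paper gives no argument at all --- it simply cites the standard density result for the Herglotz integral operator \cite[Theorem 5.24]{CoKr:92} --- and your far-field/Rellich/jump-relation argument is essentially the standard proof of that cited result, so you have supplied what the paper outsources. One small caveat there: Rellich's lemma plus unique continuation gives $\mathcal{S}_k\phi=0$ only on the unbounded component of $\R^2\setminus\overline{D}$, so to conclude $\gamma_-(\mathcal{S}_k\phi)=0$ on all of $\Gamma$ you implicitly need $\R^2\setminus\overline{D}$ connected; this hypothesis is equally implicit in the result the paper cites, so it is not a defect relative to the paper, but it is worth stating.
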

\begin{proof}
(i) This is a development of standard arguments, e.g., \cite{AlVa:05}, which demonstrate the linear independence of ordinary plane waves. Suppose that $c_1,...,c_N\in\C$ and $v:= \sum_{n=1}^N c_n v(\cdot,\theta_n)=0$ on $\Gamma$. Then $v=0$ in $D$ and, since $v\in C^2(\R^2)$ and satisfies \eqref{eqn:HE} in $\R^2$, it follows by analyticity of $v$ \cite[p.~72]{CoKr:83} that $v=0$ in $\R^2$.

For $n=1,...,N$ set $\alpha_n := k\cos\theta_n$ and $\beta_n := k\sin \theta_n$. Suppose, without loss of generality, that $N=2M$ is even, that $N\geq 4$, and that $\alpha_1=1$, $\alpha_2=-1$, and $\alpha_{2m-1}=\alpha_{2m}$, in which case $\beta_{2m-1} = -\beta_{2m}\neq 0$, for $m=2,...,M$. Since $v((t,0))=0$ for $t\in\R$, it holds that
\begin{equation} \label{eq:linind}
\sum_{m=1}^{M+1} d_m \re^{\ri \eta_m t} = 0, \quad t\in\R,
\end{equation}
where $d_m:=c_m$, $\eta_m:= \alpha_m$, for $m=1,2$, and $\eta_{m+1}:= \alpha_{2m-1}$, $d_{m+1} := c_{2m-1}+c_{2m}$, for $m=2,...,M$. Since the $\theta_n$, $n=1,...,N$, are distinct, so also are the $\eta_m$, $m=1,...,M+1$. Let $n\in \{1,...,M+1\}$ be such that $\im{\eta_n}\leq \im{\eta_m}$, for $m=1,...,M+1$. Then, multiplying \eqref{eq:linind} by $\exp(-\ri\eta_n)$ and integrating it follows that
$$
\sum_{m=1}^M \frac{d_m}{A}\int_{A}^{2A}\re^{\ri (\eta_m-\eta_n) t}\,\rd t = 0, \quad A>0.
$$
Taking the limit $A\to\infty$ we see that $d_n=0$. Repeating this argument we deduce that $d_m=0$, $m=1,...,M+1$, so that $c_1=c_2=0$ and $c_{2m-1}=-c_{2m}$, $m=2,...,M$.

To conclude we note also that $\partial v(x)/\partial_{x_2}=0$ for $x=(t,0)$ and $t\in \R$. This implies that
\begin{equation} \nonumber
\sum_{m=3}^{M+1} e_m \re^{\ri \eta_m t} = 0, \quad t\in\R,
\end{equation}
where $e_m = \ri\beta_{2m}(c_{2m}-c_{2m-1})$. Arguing as for \eqref{eq:linind} we deduce that $e_m=0$, for $m=2,...,M+1$, so that $c_m=0$, for $m=1,...,2M$.

(ii) This is an easy consequence of a standard result on the Herglotz integral operator \cite[Theorem 5.24]{CoKr:92}.
\end{proof}

Spence \cite{Spence14} reviews the implementations to date of the unified transform method for the (2D) interior Dirichlet problem \eqref{prob:idp}.
  These implementations impose \eqref{eq:gl} (with $\gamma u = h$) for $v\in \mathcal{P}_N$, where $\mathcal{P}_N$ is an $N$-dimensional {\em test space} of generalised plane waves: explicitly, for some distinct $\theta_{j,N}\in \C$, $j=1,...,N$, $\mathcal{P}_N$ is the space spanned by $\{v(\cdot,\theta_{j,N}): 1\leq j\leq N\}$. An approximation $\phi_M$ to $\partial_\nu u$, which is an element of $Q_M\subset H^{-1/2}(\Gamma)$, the $M$-dimensional {\em trial space} with $M\geq N$, is obtained by requiring that
\begin{equation} \label{eq:fd}
\int_\Gamma \phi_M \overline{\gamma v_N} \rd s = \int_\Gamma h \overline{\partial_\nu v_N} \rd s, \mbox{ for all } v_N\in \mathcal{P}_N,
\end{equation}
this equation overdetermined if $M>N$ in which case it is imposed, e.g., in a least squares sense. Spence \cite{Spence14} tabulates the implementations to date, which vary in the choice of approximation space $Q_M$ and in the choice of generalised plane waves, i.e. in the choice of $\theta_{j,N}\in \C$, $j=1,...,N$.

Assume now that $-k^2$ is not a Dirichlet eigenvalue of the Laplacian, in other words (see the discussion below \eqref{prob:idp}), that the interior Dirichlet problem \eqref{prob:idp} has a unique solution for all $h\in H^{1/2}(\Gamma)$. In that case there is a well-defined operator $P_{\mathrm{DtN}}:H^{1/2}(\Gamma)\to H^{-1/2}(\Gamma)$, the {\em Dirichlet to Neumann map}, that takes the Dirichlet data $h\in H^{1/2}(\Gamma)$ to $\partial_\nu u\in H^{-1/2}(\Gamma)$, where $u$ is the solution of the BVP \eqref{prob:idp}. From \eqref{eq:bie_idp1} we see that, explicitly,
\begin{equation} \label{eq:dtn}
P_{\mathrm{DtN}} = S_k^{-1}(D_k + \textstyle{\frac{1}{2}}I).
\end{equation}
 With this notation the global relation \eqref{eq:gl} for the interior Dirichlet problem can be written equivalently as
\begin{equation} \label{eq:gl2}
\int_\Gamma \phi \overline \psi \rd s = \int_\Gamma h \overline{P_{\mathrm{DtN}} \psi} \rd s, \mbox{ for all } \psi\in H^{1/2}(\Gamma),
\end{equation}
where $\phi=\partial_\nu u\in H^{-1/2}(\Gamma)$.

The implementations of the unified transform method reviewed in \cite{Spence14} can be viewed as Petrov-Galerkin methods for the variational equation  \eqref{eq:gl2}, {\em Petrov}-Galerkin since the trial and test spaces differ, reflecting that the trial and test spaces, $H^{-1/2}(\Gamma)$ and $H^{1/2}(\Gamma)$, respectively, differ in \eqref{eq:gl2} at the continuous level. But
in the case that the Dirichlet data is sufficiently smooth, precisely when $h\in H^1(\Gamma)$ in which case, by Corollary \ref{cor:main_equivalence}, $\phi\in L^2(\Gamma)$, we can think of \eqref{eq:gl2} alternatively as a variational problem on $L^2(\Gamma)\times L^2(\Gamma)$. (Note that, from \eqref{eq:dtn} and \cite[Theorem 2.25]{ChGrLaSp:12}, $P_{\mathrm{DtN}}$ extends to a bounded mapping from $L^2(\Gamma)$ to $H^{-1}(\Gamma)$, this the dual space of $H^1(\Gamma)$ with respect to the $L^2(\Gamma)$ inner product, so that, when $h\in H^1(\Gamma)$, the right hand side of \eqref{eq:gl2} defines a continuous anti-linear functional on $L^2(\Gamma)$.) Indeed, in this setting, \eqref{eq:gl2} is the simplest possible variational problem: \eqref{eq:gl2} corresponds to \eqref{eq:var} with $\mathcal{V}=\mathcal{V}^\prime = L^2(\Gamma)$ and $A=I$, the identity operator! Of course this problem is trivially continuous and coercive, with continuity and coercivity constants, $C$ and $\alpha$ in Lemma \ref{lem:cea}, equal to one. Thus attractive is to choose $M=N$ and $Q_M := \mathcal{P}^\Gamma_N$, where $\mathcal{P}^\Gamma_N:= \gamma(\mathcal{P}_N) = \{ \gamma v:v\in \mathcal{P}\}$ is a space of restrictions of plane waves to $\Gamma$, for in that case \eqref{eq:fd} is a standard Galerkin method and, by C\'ea's lemma (Lemma \ref{lem:cea}) or standard properties of orthogonal projection in Hilbert spaces, e.g., \cite[Theorem 12.14]{Rudin:91}, one has the following result.

\begin{thm} \label{thm:unified}
Suppose that $-k^2$ is not a Dirichlet eigenvalue, so that \eqref{prob:idp} is uniquely solvable,  and $h\in H^1(\Gamma)$ so that $\phi:=\partial_\nu u\in L^2(\Gamma)$. Suppose also that $M=N\in \N$ and $Q_M:=\mathcal{P}^\Gamma_N:=\gamma(\mathcal{P}_N)$, where $\mathcal{P}_N$ is some $N$-dimensional subspace of $\mathcal{P}$. Then \eqref{eq:fd} has a unique solution which is $\phi_N=P_N \phi$, where $P_N:L^2(\Gamma)\to \mathcal{P}^\Gamma_N$ is orthogonal projection, so that $\phi_N$ is the best $L^2(\Gamma)$ approximation to $\phi$ in $\mathcal{P}^\Gamma_N$.
\end{thm}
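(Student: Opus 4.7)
The plan is to recognise \eqref{eq:fd}, once the trial and test spaces are identified, as nothing other than the classical Galerkin method for the orthogonal projection of $\phi\in L^2(\Gamma)$ onto the finite-dimensional subspace $\mathcal{P}^\Gamma_N$. Existence, uniqueness and the best-approximation property then follow from standard Hilbert space theory (equivalently, from C\'ea's lemma, Lemma \ref{lem:cea}, applied to the trivial variational problem on $L^2(\Gamma)\times L^2(\Gamma)$ with $A=I$, for which continuity and coercivity constants both equal one, so quasi-optimality is optimality).

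The central observation, which I would make first, is that for every $v\in\mathcal{P}\subset\mathcal{R}$, $v$ is itself the unique solution of the interior Dirichlet problem with data $\gamma v$ (uniqueness here is exactly the assumption that $-k^2$ is not a Dirichlet eigenvalue), so $\partial_\nu v = P_{\mathrm{DtN}}(\gamma v)$. Consequently, for any $v_N\in\mathcal{P}_N$, the right-hand side of \eqref{eq:fd} can be rewritten as $\int_\Gamma h\,\overline{P_{\mathrm{DtN}}(\gamma v_N)}\,\rd s$, which by the global relation \eqref{eq:gl2} applied with $\psi=\gamma v_N\in H^{1/2}(\Gamma)$ equals $\int_\Gamma \phi\,\overline{\gamma v_N}\,\rd s$. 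The hypothesis $h\in H^1(\Gamma)$ guarantees via Corollary \ref{cor:main_equivalence} that $\phi\in L^2(\Gamma)$, so both integrands are in $L^2(\Gamma)$ and the rewriting is legitimate. Thus \eqref{eq:fd} is equivalent to the Galerkin orthogonality
\begin{equation*}
\int_\Gamma (\phi-\phi_N)\,\overline{\gamma v_N}\,\rd s = 0,\quad\mbox{for all } v_N\in\mathcal{P}_N,
\end{equation*}
that is, $\phi-\phi_N\perp \mathcal{P}^\Gamma_N$ in $L^2(\Gamma)$. Since $\phi_N\in \mathcal{P}^\Gamma_N$, this characterises $\phi_N$ as the orthogonal projection $P_N\phi$, hence as the best $L^2(\Gamma)$-approximation to $\phi$ from $\mathcal{P}^\Gamma_N$.

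For existence and uniqueness of $\phi_N$, I would fix a basis $v(\cdot,\theta_{1,N}),\dots,v(\cdot,\theta_{N,N})$ of $\mathcal{P}_N$ with the $\theta_{j,N}\in \C$ distinct, and invoke Lemma \ref{lem:dense}(i) to conclude that the traces $\gamma v(\cdot,\theta_{j,N})$ are linearly independent in $L^2(\Gamma)$. Their $L^2(\Gamma)$ Gram matrix is then Hermitian positive definite, hence invertible, and the discrete system determining $\phi_N$ in \eqref{eq:fd} is precisely governed by this Gram matrix, so has a unique solution.

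There is no real obstacle once one spots the dual role played by each $v_N\in\mathcal{P}_N$: on the left of \eqref{eq:fd} it contributes its boundary trace (the test function on the $L^2(\Gamma)$ side), while on the right the identity $\partial_\nu v_N=P_{\mathrm{DtN}}(\gamma v_N)$ supplies precisely the quantity needed to invoke the global relation and replace $h$ by $\phi$. The only ingredients beyond this observation are the linear independence result of Lemma \ref{lem:dense}(i) and the $L^2(\Gamma)$-regularity of $\phi$ granted by Corollary \ref{cor:main_equivalence}; both are consequences of the standing hypotheses.
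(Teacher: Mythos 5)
Your proposal is correct and follows essentially the same route as the paper: the paper likewise identifies \eqref{eq:fd}, via the global relation, as the Galerkin method for the trivial variational problem \eqref{eq:gl2} with $A=I$ on $L^2(\Gamma)$, and then invokes C\'ea's lemma (with continuity and coercivity constants equal to one) or the standard characterisation of orthogonal projection, with unique solvability resting on the positive definiteness of the Gram matrix guaranteed by Lemma \ref{lem:dense}(i). Your write-up merely makes explicit the intermediate steps (the identity $\partial_\nu v_N = P_{\mathrm{DtN}}(\gamma v_N)$ and the resulting Galerkin orthogonality) that the paper leaves implicit.
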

With the choices made in this theorem it holds that
$$
\phi_N = \sum_{n=1}^N c_n \gamma v(\cdot,\theta_{n,N}),
$$
for some complex coefficients $c_n$, and \eqref{eq:fd} is equivalent to the linear system
\begin{equation} \label{eq:ls}
\sum_{n=1}^N a_{mn} c_n = \int_\Gamma h \,\overline{\partial_\nu v(\cdot,\theta_{m,N})} \,\rd s, \quad m = 1,...,N,
\end{equation}
where $a_{mn}= \int_\Gamma \gamma v(\cdot,\theta_{n,N}) \overline{\gamma v(\cdot,\theta_{m,N})} \rd s$. It is easy to see that the matrix $[a_{mn}]$ is Hermitian and positive semi-definite, indeed positive definite in view of Lemma \ref{lem:dense}(i): see, e.g., the discussion in \cite[\S5]{ArChDeSa:06}.

\begin{rem} \label{rem:ls} The same matrix $[a_{mn}]$ and a similar right hand side arises when solving the interior Dirichlet problem by a least squares method (e.g., \cite{AlVa:05}). Seek an approximation $u_N$ to the solution $u$ of \eqref{prob:idp} in the form
$$
u_N = \sum_{n=1}^N c^\prime_n v(\cdot,\theta_{n,N}),
$$
choosing the complex coefficients $c_n^\prime$ to minimise $\|h-\gamma u_N\|_{L^2(\Gamma)}$. Then the coefficients $c_n^\prime$ satisfy
$$
\sum_{n=1}^N a_{mn} c_n^\prime = \int_\Gamma h \overline{\gamma v(\cdot,\theta_{m,N})} \rd s, \quad m = 1,...,N.
$$
\end{rem}

\begin{rem} \label{rem:mfs} The focus above is on the {\em unified method} as a numerical method, i.e. on \eqref{eq:fd}. But we note that the same analysis and results apply (and in 2D and 3D) if we replace $\mathcal{P}_N$ in \eqref{eq:fd} by any $N$-dimensional subspace of $\mathcal{R}$. Not least Theorem \ref{thm:unified} holds (in 2D and 3D) with $\mathcal{P}_N$ any $N$-dimensional subspace of $\mathcal{R}$, and \eqref{eq:ls} holds with $v(\cdot,\theta_{j,N})$, $j=1,...,N$, replaced by any choice of basis for this amended $\mathcal{P}_N$.

One possible choice of $\mathcal{P}_N\subset \mathcal{R}$, this choice suggested by the lower part of \eqref{eq:grt_int}, is to take $\mathcal{P}_N$ to be the space spanned by $\{\overline{\Phi(\cdot,\bx_n)}:1\leq n\leq N\}$, where $\bx_1,...,\bx_N$ are distinct points in $\R^d\setminus \overline D$. With this choice \eqref{eq:fd} is a variant on the so-called {\em method of fundamental solutions}, e.g., \cite{AlVa:05}.
\end{rem}

\subsection{Diffraction gratings and the unified transform method} \label{sec:undg}

The previous subsection has described the unified transform method as a numerical method for interior problems, in particular the interior Dirichlet problem \eqref{prob:idp}, but our focus in this paper is acoustic {\em scattering} in which we are solving exterior problems. As noted above, we cannot see how the unified transform method, as currently formulated, can be applied to any of the exterior or scattering problems that we have stated in \S\ref{sec:BIE}, where we need to solve the Helmholtz equation in the exterior of  a bounded set $\Omega_-$, whose boundary is denoted by $\Gamma$. In particular, the global relation \eqref{eq:gl} does not hold for any generalised plane wave $v$ in this case.

But, as outlined above, the global relation {\em does} hold for some generalised plane waves in the rough surface scattering case, where the scatterer is the non-locally perturbed half-plane (in 2D) or half-space (in 3D), given by \eqref{eq:halfspace}. For this geometry, in both 2D and 3D, versions of the global relation and the unified transform method have been developed independently (and with different terminology) by DeSanto and co-workers \cite{DeSanto:81,DeSaErHeMi:98,DeSaErHeMi:01,DeSaErHeKrMiSw:01,ArChDeSa:06}.

In this section we describe (and elaborate on) this work for the simplest case, for which the numerical analysis is most complete \cite{DeSanto:81,DeSaErHeMi:98,ArChDeSa:06}, namely the 2D sound soft scattering problem and associated Dirichlet problem in the case when $f:\R\to\R$ in \eqref{eq:halfspace} is periodic, with some period $L$. For this scattering problem, in the case when the incident field $u^I$ is the plane wave \eqref{eqn:plane} for some $\hat \ba = (\sin \theta^I,-\cos \theta^I)$, where $\theta^I\in (-\pi/2,\pi/2)$ is the angle of incidence, it is natural to look for a solution $u$ to the scattering problem which is {\em quasi-periodic} with period $L$ and phase shift $\mu=k\sin \theta^I$, meaning that
\begin{equation} \label{eq:per}
u((x_1+L,x_2)) = \exp(\ri \mu L) u(\bx), \quad \mbox{for } \bx \in \Omega_+.
\end{equation}

Let $\Omega_+^L := \{\bx = (x_1,x_2)\in \Omega_+: 0<x_1<L\}$, $\Gamma^L:= \{\bx\in \Gamma:0<x_1<L\}$, and, for $\mu\in \R$, let $C^2_{\mu}(\Omega^L_+)$ denote those  $u\in C^2(\Omega_+)$ that satisfy \eqref{eq:per}. The standard radiation condition imposed on the scattering field $u^S:= u-u^I$ for this problem can be obtained by applying separation of variables in $\Omega_+^L$ to the Helmholtz equation \eqref{eqn:HE} under the constraint that $u\in C^2_{\mu}(\Omega_+)$. This leads to an expression for $u^S$ as a countable linear combination of generalised plane waves. Discarding those generalised plane waves which are growing exponentially away from $\Gamma$, or are plane waves propagating towards $\Gamma$, leads to the representation (the {\em Rayleigh expansion radiation condition} (RERC)) that
\begin{equation} \label{eq:rerc}
u^S(\bx) = \sum_{n\in\Z} c_n \exp(\ri k[\alpha_n x_1 + \beta_n x_2]), \quad \mbox{for } x_2 > f_+ := \max(f),
\end{equation}
for some complex coefficients $c_n$, where
\begin{equation} \label{eq:abdef}
\alpha_n  := \mu/k + 2\pi n/(kL) \;\mbox{ and } \;\beta_n := \left\{\begin{array}{cc}
                    \sqrt{1-\alpha_n^2}, & |\alpha_n| \leq 1, \\
                    \ri\sqrt{\alpha_n^2-1}, & |\alpha_n| > 1.
                  \end{array}\right.
\end{equation}
 The standard Dirichlet problem in this case is then:
\begin{equation} \label{prob:dgdp}
\begin{array}{l}
  \mbox{Given }h\in H_{\mu}^{1/2}(\Gamma^L), \mbox{ find }u\in C^2_{\mu}(\Omega^L_+)\cap H_{\mathrm{loc}}^1(\Omega^L_+)\\
  \mbox{such that (\ref{eqn:HE}) holds in } \Omega_+, \gamma u =  h \mbox{ on }\Gamma^L,\\
  \mbox{and } u \mbox{ satisfies the RERC  \eqref{eq:rerc}}.
\end{array}
\end{equation}
Here, for $\mu\in \R$, $H_{\mu}^{1/2}(\Gamma^L)$ is the closure in $H^{1/2}(\Gamma^L)$ of those $\phi\in C^\infty(\Gamma)$ that satisfy \eqref{eq:per} for $\bx \in \Gamma$. That \eqref{prob:dgdp} is uniquely solvable is shown in \cite{ElYa:02}.

For $\mu\in\R$, let
$$
\mathcal{R}_{\mu}:= \{v\in C^2_{\mu}(\Omega^L_+)\cap H_{\mathrm{loc}}^1(\Omega^L_+): v  \mbox{ satisfies } \eqref{eqn:HE} \mbox{ and }\eqref{eq:rerc}\},
$$
and note that $u\in \mathcal{R}_\mu$ if $u$ is a solution of \eqref{prob:dgdp}.
The numerical schemes in \cite{DeSaErHeMi:98} derive from the observation that, if $u$ satisfies \eqref{prob:dgdp}, then (where $\nu$ is the unit normal directed into $\Omega_+$)
\begin{equation} \label{eq:gldg}
\int_{\Gamma^L} \partial_\nu u \gamma v \rd s = \int_{\Gamma^L} h\partial_\nu v\rd s, \mbox{ for all } v\in \mathcal{R}_{-\mu},
\end{equation}
 this identity \eqref{eq:gldg} derived by applying Green's second theorem to $u$ and $v$ in $\{\bx\in \Omega_+^L: x_2<H\}$, for some $H>f_+$. The identity \eqref{eq:gldg} holds, in particular, for those generalised plane waves $v(\cdot,\theta)$ that are in the set
$
\mathcal{P}_{\mu} := \{v(\cdot,\theta_n): n\in\Z\},
$
where $\theta_n\in \C$ is defined by
$$
(\cos \theta_n,\sin\theta_n) = (-\alpha_n,\beta_n),
$$
with $\alpha_n$ and $\beta_n$ given by \eqref{eq:abdef}. These are the generalised plane waves that are elements of $\mathcal{R}_{-\mu}$.

Thus a version of the global relation holds, that
\begin{equation} \label{eq:gldg2}
\int_{\Gamma^L} \partial_\nu u \,\overline{\gamma v} \,\rd s = \int_{\Gamma^L} h \,\overline{\partial_\nu v}\,\rd s, \mbox{ for all } v\in \mathcal{P}^*_{\mu},
\end{equation}
where
$$
\mathcal{P}^*_{\mu} := \{v(\cdot,\theta): (\cos \theta,\sin\theta) = (\alpha_n,-\overline{\beta_n}) \mbox{ and }n\in\Z\}\subset \mathcal{P}.
$$
Explicitly, \eqref{eq:gldg2} is the sequence of equations
\begin{equation} \label{eq:gldg3}
\int_{\Gamma^L} \partial_\nu u \, v(\cdot,\theta_n) \,\rd s = \int_{\Gamma^L} h \,\partial_\nu v(\cdot,\theta_n)\,\rd s, \quad n\in\Z.
\end{equation}
Equation \eqref{eq:gldg2}, which is equivalent to \eqref{eq:gldg3}, uniquely determines $\partial_\nu u$ because of the following density result (cf., Lemma \ref{lem:dense}).

\begin{lem} \label{lem:dense2} (i) \cite[Corollary 3.2]{ArChDeSa:06} The functions $\{\gamma v(\cdot,\theta_n):n\in\Z\}\subset L^2(\Gamma^L)$ are linearly independent.

(ii) \cite[Lemma 3.1]{ArChDeSa:06} The linear span of $\mathcal{P}^*_{\mathrm{R}}$, which is the linear span of $\{v(\cdot,\theta_n):n\in\Z\}$, is dense in $L^2(\Gamma^L)$.
\end{lem}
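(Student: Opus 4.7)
The plan is to reduce both parts to the uniqueness of the quasi-periodic Dirichlet problem \eqref{prob:dgdp} (from \cite{ElYa:02}), exploiting the precise matching between $\overline{\gamma v(\cdot,\theta_n)}$ and the $\by$-dependence of the quasi-periodic Green's function $G_\mu$ in its Rayleigh expansion. The conjugation $\sin\theta_n = -\overline{\beta_n}$ in the definition of $\mathcal{P}^*_\mu$ is designed precisely so that this matching occurs.

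For (ii), suppose $\phi \in L^2(\Gamma^L)$ satisfies $I_n := \int_{\Gamma^L} \phi\,\overline{\gamma v(\cdot,\theta_n)}\,\rd s = 0$ for every $n \in \Z$. Form the quasi-periodic single-layer potential $u(\bx) := \int_{\Gamma^L} G_\mu(\bx,\by)\phi(\by)\,\rd s(\by)$; this is quasi-periodic and solves \eqref{eqn:HE} off the periodic extension of $\Gamma$. A direct computation shows that the Rayleigh representation of $G_\mu$ in the overlap region $\{x_2 < y_2\}$ has $n$-th $\by$-factor exactly equal to $\overline{v(\by,\theta_n)}$, so in $\{x_2 < f_-\}$ (with $f_- := \min f$) the downward Rayleigh coefficients of $u$ are constant multiples of the $I_n$. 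Under the hypothesis these all vanish, so $u \equiv 0$ in $\{x_2 < f_-\}$, and by real-analyticity of Helmholtz solutions on the connected open set $\Omega_-$ we deduce $u \equiv 0$ throughout $\Omega_-$. Continuity of the single-layer across $\Gamma$ then gives $\gamma_+ u = 0$ on $\Gamma^L$, so $u$ restricted to $\Omega_+^L$ satisfies the hypotheses of \eqref{prob:dgdp} with $h=0$ and hence vanishes by uniqueness. The standard jump relation for the normal derivative of the single layer then forces $\phi = 0$.

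For (i), suppose a finite linear combination $V := \sum_{n\in I} c_n v(\cdot,\theta_n)$ vanishes on $\Gamma^L$ (hence on all of $\Gamma$ by quasi-periodicity). Apply Green's second identity to $V$ and $\bar V$ in $\Omega_+^L \cap \{x_2 < H\}$ for $H > f_+$. The $\Gamma^L$-contribution is zero (as $V=0$), the vertical-side contributions cancel by quasi-periodicity, and the top contribution, evaluated using the $L^2(0,L)$-orthogonality of $\{e^{\ri k \alpha_n x_1}\}_{n\in\Z}$, splits as $-L\ri k\sum_{|\alpha_n|\leq 1}\beta_n|c_n|^2 - Lk\sum_{|\alpha_n|>1}\gamma_n|c_n|^2 e^{-2k\gamma_n H}$, where $\gamma_n := \sqrt{\alpha_n^2-1}$. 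The first piece is purely imaginary, the second purely real; since the interior integrand $|\nabla V|^2 - k^2|V|^2$ is real, the imaginary part must vanish, forcing $c_n=0$ for all $n \in I$ with $|\alpha_n|<1$. What remains is a sum of evanescent modes only, which decays as $x_2 \to +\infty$ and hence satisfies the RERC; since it still vanishes on $\Gamma$, uniqueness of \eqref{prob:dgdp} with $h=0$ makes it vanish in $\Omega_+^L$, and by entirety in all of $\R^2$. Rayleigh-mode orthogonality on any horizontal line above $\Gamma$ then yields $c_n = 0$ for the remaining (evanescent) indices.

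The main anticipated obstacle is the degenerate case $|\alpha_n| = 1$ (Rayleigh/Wood anomaly), in which $\beta_n = 0$, the quasi-periodic Green's function $G_\mu$ used in (ii) is not available in closed form, and the imaginary-part argument in (i) yields no information on that $c_n$; this configuration coincides with the exceptional wavenumbers at which \eqref{prob:dgdp} may fail to be uniquely solvable, and so must either be assumed away (which is generic in $k$, $\mu$, $L$) or treated by a separate perturbative argument. A secondary, purely technical, issue is the careful bookkeeping of complex conjugations relating $\overline{\gamma v(\cdot,\theta_n)}$ to the expansion of $G_\mu$, which works out only because of the precise sign choice $(\cos\theta_n,\sin\theta_n) = (\alpha_n,-\overline{\beta_n})$ made in defining $\mathcal{P}^*_\mu$.
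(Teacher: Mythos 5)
The paper offers no proof of Lemma \ref{lem:dense2}: both parts are simply cited to \cite[Lemma 3.1, Corollary 3.2]{ArChDeSa:06}, so there is no in-paper argument to measure you against. Your proof of (ii) is, in substance, the standard one (and essentially that of the cited source): quasi-periodic single-layer potential, vanishing of the downward Rayleigh coefficients, unique continuation into $\Omega_-$, continuity of the trace across $\Gamma$, uniqueness of \eqref{prob:dgdp}, and the normal-derivative jump. Two corrections. First, the conjugations: for $x_2<y_2$ the $\by$-factor in the Rayleigh expansion of $G_\mu$ is $v(\by,\theta_n)$ itself, not $\overline{v(\by,\theta_n)}$; since the hypothesis is $\int_{\Gamma^L}\phi\,\overline{\gamma v(\cdot,\theta_n)}\,\rd s=0$, equivalently $\int_{\Gamma^L}\overline{\phi}\,\gamma v(\cdot,\theta_n)\,\rd s=0$, you should take the single-layer density to be $\overline{\phi}$, concluding $\overline{\phi}=0$ and hence $\phi=0$. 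This is exactly the bookkeeping you flag, and it is harmless once done. Second, the Wood-anomaly caveat: you do need $\beta_n\neq 0$ for all $n$ for $G_\mu$ to exist (and \cite{ArChDeSa:06} assume this), but it is not true that these coincide with wavenumbers at which \eqref{prob:dgdp} may fail to be uniquely solvable --- for a Dirichlet grating that is the graph of a function, \cite{ElYa:02} gives unique solvability for every $k>0$, Wood anomalies included; the anomaly obstructs only your choice of Green's function, not well-posedness.

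Part (i) is correct but takes an unnecessary detour. Every $v(\cdot,\theta_n)$ --- propagating and evanescent alike --- lies in $\mathcal{R}_{-\mu}$, i.e.\ already satisfies the Rayleigh expansion radiation condition for phase shift $-\mu$ (this is the paper's own observation just above \eqref{eq:gldg2}). So once $V=\sum_{n\in I}c_nv(\cdot,\theta_n)$ vanishes on $\Gamma$ you can apply uniqueness of the $-\mu$ analogue of \eqref{prob:dgdp} to the whole of $V$ at once, conclude $V\equiv 0$ in $\Omega_+$ and hence, since $V$ is entire, in all of $\R^2$, and then read off $c_n=0$ from the orthogonality of $\{\re^{-\ri k\alpha_n x_1}\}$ on any horizontal line. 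Your Green's-identity step, which first eliminates the propagating modes, in effect re-derives a piece of that uniqueness proof and still leaves the grazing case $|\alpha_n|=1$ untreated, whereas the direct route needs no case distinction. In either version do note explicitly that $V$ is $(-\mu)$-quasi-periodic, so it is the $-\mu$ version of the Dirichlet problem whose uniqueness is being invoked.
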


In the sound soft scattering problem, the incident field $u^I$ is the plane wave \eqref{eqn:plane}, with $\hat \ba = (\sin \theta^I, -\cos \theta^I) = (\alpha_0, -\beta_0)$, and the scattered field $u^S$ is the solution of \eqref{prob:dgdp} with $h=-u^I|_{\Gamma^L}$. So \eqref{eq:gldg3} holds with $u$ replaced by $u^S$ and $h=-u^I|_{\Gamma^L}$. But also, applying Green's second theorem to $u^I$ and $v(\cdot,\theta_n)$ in $\{\bx\in \Omega_+^L: x_2<H\}$ for some $H>f_+$, we deduce that
\begin{equation} \label{eq:gldg4}
\int_{\Gamma^L} \partial_\nu u^I \, v(\cdot,\theta_n) \,\rd s = \int_{\Gamma^L} \gamma u^I \,\partial_\nu v(\cdot,\theta_n)\,\rd s -2\ri kL\delta_{0,n}, \quad n\in\Z,
\end{equation}
where $\delta_{m,n}$ is the Kronecker delta. Adding \eqref{eq:gldg3} (with $u$ replaced by $u^S$ and $h=-\gamma u^I$) and \eqref{eq:gldg4}, we see that the total field $u=u^I+u^S$ satisfies
\begin{equation} \label{eq:gldg5}
\int_{\Gamma^L} \partial_\nu u \, \gamma v(\cdot,\theta_n) \,\rd s = -2\ri kL\delta_{0,n}, \quad n\in\Z,
\end{equation}
this equation dating back to \cite{DeSanto:81} (and see \cite[\S4]{DeSaErHeMi:98} and \cite{ArChDeSa:06}). It follows, e.g. from Rellich identities,  that $\partial_\nu u\in L^2(\Gamma^L)$ \cite{ElYa:02}.

DeSanto et al.~\cite{DeSaErHeMi:98} propose discretisations of \eqref{eq:gldg5} in which we seek an approximation to $\partial_\nu u$ in $Q_N$, some $N$-dimensional subspace of $L^2(\Gamma^L)$, and we impose \eqref{eq:gldg5} for $n=n_{1},...,n_{N}$, for distinct integers $n_j$; in other words we impose \eqref{eq:gldg5} for $v(\cdot,\theta_n)\in \mathcal{P}_N$, the $N$-dimensional space spanned by $\{v(\cdot,\theta_{n_m}):1\leq m\leq N\}$. If $\{\chi_{1},...,\chi_{N}\}$ is a basis for $Q_N$, and $\psi_n:= \overline{\gamma v(\cdot,\theta_n)}$,  then the numerical method is to approximate
\begin{equation} \label{eq:phiN}
\partial_\nu u \approx \phi_N = \sum_{m=1}^N c_m \chi_m
\end{equation}
where the coefficients $c_m$ satisfy the linear system
\begin{equation} \label{eq:linsys}
\sum_{m=1}^N \int_{\Gamma^L} \chi_m\, \overline{\psi_{n_j}} \, \rd s\, c_m = -2\ri kL\delta_{0,n_j}, \quad j=1,...,N.
\end{equation}

DeSanto et al.~\cite{DeSaErHeMi:98} propose two different choices of space $Q_N$ and basis functions $\chi_m$, the {\em spectral coordinate} (SC) method, using a ``pulse'' basis (piecewise constant basis functions), in which $\chi_m$ is the characteristic function of $\{\bx\in \Gamma^L: (m-1)h<x_1<mh\}$, where $h=L/N$, and the {\em spectral-spectral} (SS) method, in which $\chi_m := \gamma v(\cdot, \theta_{n_m}+\pi)$, for $m=1,...,N$. Numerical experiments with the SC and SS methods are carried out in \cite{DeSaErHeMi:98}. The methods are extended to transmission problems in \cite{DeSaErHeMi:01}, and the SC method to 3D sound soft and sound hard scattering problems for doubly-periodic, diffraction gratings surfaces in \cite{DeSaErHeKrMiSw:01}.

In \cite{ArChDeSa:06} a variant of the SS method is proposed, the SS$^*$ method, characterised by the choice $\chi_m = \psi_{n_m} = \overline{\gamma v(\cdot,\theta_{n_m})}$, so that $Q_N=\mathcal{P}_N^\Gamma$, the space spanned by $\{\gamma \overline{v(\cdot,\theta_{n_m})}:1\leq m\leq N\}$. An attraction of this method, as observed in \cite{ArChDeSa:06}, is that, as with \eqref{eq:ls}, this leads to a coefficient matrix $A_N= [a_{jm}]$, in this case with $a_{jm} = \int_{\Gamma^L}  \psi_{n_m}\, \overline{\psi_{n_j}} \, \rd s$, that is Hermitian and positive definite.

Analogously to \eqref{eq:gl2}, \eqref{eq:gldg5} can be viewed as a variational formulation problem on $L^2(\Gamma^L)\times L^2(\Gamma^L)$: \eqref{eq:gldg5} corresponds to \eqref{eq:var} with $\mathcal{V}=\mathcal{V}^\prime = L^2(\Gamma^L)$ and $A=I$, the identity operator. Like \eqref{eq:gl2} this formulation is trivially continuous and coercive, with continuity and coercivity constants, $C$ and $\alpha$ in Lemma \ref{lem:cea}, equal to one. The choice $Q_N=\mathcal{P}_N^\Gamma$ is a Galerkin method for \eqref{eq:gldg5},  and by C\'ea's lemma (Lemma \ref{lem:cea}) one has the following result (cf,~Theorem \ref{thm:unified}).

\begin{thm} \label{thm:unified2}
Suppose that $Q_N =\mathcal{P}^\Gamma_N$. Then $\phi_N$, given by \eqref{eq:phiN} and \eqref{eq:linsys}, is  the best $L^2(\Gamma^L)$ approximation to $\partial_\nu u$ in $\mathcal{P}^\Gamma_N$.
\end{thm}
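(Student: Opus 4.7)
The proof will mirror the argument behind Theorem \ref{thm:unified}: once the correct variational framework is identified, the assertion reduces to the orthogonal-projection characterisation of best $L^2$ approximation. I would begin by rewriting the global relation \eqref{eq:gldg5} as an $L^2(\Gamma^L)$ inner-product identity. Since $\psi_n = \overline{\gamma v(\cdot,\theta_n)}$ and $(f,g)_{L^2(\Gamma^L)} := \int_{\Gamma^L} f\,\overline{g}\,\rd s$, \eqref{eq:gldg5} is precisely
\[
(\partial_\nu u,\psi_n)_{L^2(\Gamma^L)} \;=\; -2\ri kL\,\delta_{0,n}, \quad n\in\Z.
\]
The text notes that $\partial_\nu u \in L^2(\Gamma^L)$ by Rellich-type arguments, and by Lemma \ref{lem:dense2}(ii) the span of $\{\psi_n:n\in\Z\}$ is dense in $L^2(\Gamma^L)$, so these countably many identities determine $\phi := \partial_\nu u$ uniquely. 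Put another way, $\phi$ solves the trivial variational problem \eqref{eq:var} on $\mathcal{V}=\mathcal{V}^\prime = L^2(\Gamma^L)$ with $A=I$ and right-hand side given by the above action on the total set $\{\psi_n\}$.

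Next I would identify the linear system \eqref{eq:linsys} as a Galerkin discretisation of this variational problem. Because $Q_N = \mathcal{P}_N^\Gamma$ is spanned by $\{\psi_{n_m}:1\leq m\leq N\}$, it is natural (and consistent with the SS$^*$ method) to take the basis $\chi_m = \psi_{n_m}$; writing $\phi_N = \sum_m c_m \chi_m$, \eqref{eq:linsys} becomes
\[
(\phi_N,\psi_{n_j})_{L^2(\Gamma^L)} \;=\; -2\ri kL\,\delta_{0,n_j} \;=\; (\phi,\psi_{n_j})_{L^2(\Gamma^L)}, \quad j=1,\ldots,N,
\]
where the second equality uses the step above. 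By linearity this says $(\phi_N,w_N)_{L^2(\Gamma^L)} = (\phi,w_N)_{L^2(\Gamma^L)}$ for every $w_N\in\mathcal{P}_N^\Gamma$, which is precisely the Galerkin condition \eqref{eq:varfin} applied to our trivial variational problem. (Since $\phi_N$ depends only on the subspace $Q_N$ and not on its basis, the specific choice $\chi_m = \psi_{n_m}$ is only a computational convenience.)

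The conclusion then follows immediately from C\'ea's lemma (Lemma \ref{lem:cea}) applied with continuity and coercivity constants $C=\alpha=1$, or equivalently from the standard Hilbert-space fact that a Galerkin approximation to the identity is the orthogonal projection: $\phi_N = P_{\mathcal{P}_N^\Gamma}\phi$, and hence
\[
\|\partial_\nu u - \phi_N\|_{L^2(\Gamma^L)} \;=\; \inf_{w_N\in\mathcal{P}_N^\Gamma}\|\partial_\nu u - w_N\|_{L^2(\Gamma^L)}.
\]
Unique solvability of \eqref{eq:linsys}, the only auxiliary point, is not a real obstacle: by Lemma \ref{lem:dense2}(i) the family $\{\gamma v(\cdot,\theta_{n_m})\}$ is linearly independent, and complex conjugation preserves linear independence, so $\{\psi_{n_m}\}$ is a basis of $\mathcal{P}_N^\Gamma$ and the associated Hermitian Gram matrix is positive definite. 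Overall the proof is almost a tautology once one recognises \eqref{eq:linsys} as a Galerkin discretisation in disguise, so I do not anticipate any substantive obstacle — the result sharpens \cite[Lemma 4.1]{ArChDeSa:06} precisely by making the variational/best-approximation structure explicit.
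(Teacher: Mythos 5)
Your proposal is correct and follows essentially the same route as the paper, which likewise views \eqref{eq:gldg5} as the trivial variational problem \eqref{eq:var} with $A=I$ on $L^2(\Gamma^L)$ and identifies the SS$^*$ choice $Q_N=\mathcal{P}_N^\Gamma$ as a Galerkin method, concluding via C\'ea's lemma with continuity and coercivity constants equal to one. Your write-up merely makes explicit the steps the paper leaves implicit (the inner-product rewriting, the Galerkin orthogonality, and unique solvability via Lemma \ref{lem:dense2}(i)), so there is nothing substantive to add.
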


We note that this result improves on \cite[Lemma 4.1]{ArChDeSa:06} where it is shown, under the assumptions of this theorem, that
$$
\|\partial_\nu u - \phi_N\|_{L^2(\Gamma^L)} \leq 2 \, \inf_{\psi\in \mathcal{P}^\Gamma_N} \, \|\partial_\nu u - \psi\|_{L^2(\Gamma^L)}.
$$
Combining Theorem \ref{thm:unified2} with Lemma \ref{lem:dense2}(ii) we obtain the following corollary, guaranteeing convergence of the SS$^*$ method.

\begin{cor} \cite[Corollary 4.1]{ArChDeSa:06} Suppose that the sequence of subspaces $\mathcal{P}^\Gamma_1$, $\mathcal{P}^\Gamma_1$, ..., is chosen so that, for every $n\in\Z$, $\psi_n\in \mathcal{P}^\Gamma_N$, for all sufficiently large $N$, and that $Q_N =\mathcal{P}^\Gamma_N$, for all $N\in \N$. Then
$$
\|\partial_\nu u-\phi_N\|_{L^2(\Gamma^L)}\to 0 \quad \mbox{ as } \; N\to\infty.
$$
\end{cor}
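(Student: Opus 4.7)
The plan is to combine the best-approximation characterisation supplied by Theorem \ref{thm:unified2} with the density statement in Lemma \ref{lem:dense2}(ii); the argument is then a standard Galerkin-projection convergence recipe (best approximation, together with density of the nested trial spaces, implies convergence). First, Theorem \ref{thm:unified2} gives
$$
\|\partial_\nu u - \phi_N\|_{L^2(\Gamma^L)} = \inf_{\psi\in \mathcal{P}^\Gamma_N} \|\partial_\nu u - \psi\|_{L^2(\Gamma^L)},
$$
so the task reduces to showing that the best $L^2(\Gamma^L)$-approximation error to $\partial_\nu u$ from $\mathcal{P}^\Gamma_N$ tends to zero. This is a well-posed approximation problem because $\partial_\nu u\in L^2(\Gamma^L)$, as recalled in the paragraph containing \eqref{eq:gldg5}.

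Next I would show that $\bigcup_{N\in \N}\mathcal{P}^\Gamma_N$ is dense in $L^2(\Gamma^L)$. The hypothesis on the subspace sequence states that for every $n\in \Z$, $\psi_n\in \mathcal{P}^\Gamma_N$ for all sufficiently large $N$; consequently the linear span of $\{\psi_n: n\in\Z\}$ is contained in $\bigcup_N\mathcal{P}^\Gamma_N$. Lemma \ref{lem:dense2}(ii) gives density of the span of $\{\gamma v(\cdot,\theta_n):n\in\Z\}$ in $L^2(\Gamma^L)$, and since complex conjugation is a (conjugate-linear) $L^2$-isometry mapping this span onto $\mathrm{span}\{\psi_n:n\in\Z\}=\mathrm{span}\{\overline{\gamma v(\cdot,\theta_n)}:n\in\Z\}$, the latter span is dense too.

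To finish, given $\eps>0$ I would pick a finite linear combination $\psi^* = \sum_{j=1}^M c_j\psi_{m_j}$ with $\|\partial_\nu u - \psi^*\|_{L^2(\Gamma^L)}<\eps$. By the hypothesis there is an $N_0$ such that $\psi_{m_j}\in \mathcal{P}^\Gamma_N$ for $j=1,\ldots,M$ and all $N\geq N_0$, so $\psi^*\in \mathcal{P}^\Gamma_N$ for such $N$, and the best-approximation identity delivers
$$
\|\partial_\nu u-\phi_N\|_{L^2(\Gamma^L)}\leq \|\partial_\nu u-\psi^*\|_{L^2(\Gamma^L)}<\eps, \quad N\geq N_0.
$$
Letting $\eps\to 0$ completes the proof.

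Once Theorem \ref{thm:unified2} is in hand there is no substantial obstacle: the result is a routine density plus best-approximation argument. The only point that demands a moment's thought is that Lemma \ref{lem:dense2}(ii) is stated in terms of $\{\gamma v(\cdot,\theta_n)\}$ while the trial spaces $\mathcal{P}^\Gamma_N$ are spanned by the complex conjugates $\{\psi_{n_m}\}$, and this gap is closed painlessly by the isometry property of conjugation on $L^2(\Gamma^L)$.
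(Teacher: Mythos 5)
Your proof is correct and follows exactly the route the paper indicates: the identity $\|\partial_\nu u-\phi_N\|_{L^2(\Gamma^L)}=\inf_{\psi\in\mathcal{P}^\Gamma_N}\|\partial_\nu u-\psi\|_{L^2(\Gamma^L)}$ from Theorem \ref{thm:unified2} combined with the density statement of Lemma \ref{lem:dense2}(ii) and a standard $\eps$-argument. Your remark that conjugation is an $L^2$-isometry, closing the gap between the span of $\{\gamma v(\cdot,\theta_n)\}$ and the span of the $\psi_n$, is a worthwhile clarification of a point the paper leaves implicit.
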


\begin{rem} Numerical experiments in \cite{ArChDeSa:06} compare the performance of the SC, SS, and SS$^*$ methods -- to emphasise these can all be viewed as numerical implementations of the unified transform method for this sound soft diffraction grating problem -- for the case when the surface profile $f$ is sinusoidal, and for various sizes of $kL$ and $ka$, where $a$ is the surface amplitude. Perhaps predictably, given Theorem \ref{thm:unified2}, the SS$^*$ method is most reliable, but, for some geometries and some angles of incidence, all three methods perform very well, producing highly accurate results with around one degree of freedom per wavelength (see \cite[\S6]{ArChDeSa:06} for more details).
\end{rem}

\begin{rem} A potential difficulty with all of the SC, SS, and SS$^*$ methods is that the linear systems that arise can be very ill-conditioned. For the SS$^*$ method rigorous upper and lower bounds for $\mathrm{cond}(A_N)$, the condition number of the system matrix $A_N$, are computed in \cite{ArChDeSa:06}, and the effects of this ill-conditioning on the computed solution are estimated. Regarding the behaviour of $\kappa_N := \mathrm{cond}(A_N)$, the main results are that $\kappa_N$ remains low as long as $\mathcal{P}^\Gamma_N$ contains only plane waves, i.e. $\overline{\gamma v(\cdot,\theta_n)}$ with $\theta_n\in \R$, but necessarily eventually grows exponentially as $N\to\infty$.
\end{rem}

\begin{rem} The system matrix in the SS$^*$ method is the transpose of the matrix to be solved when the same scattering problem is solved by a least squares method: see \cite[\S5]{ArChDeSa:06} for more detail, and cf.~Remark \ref{rem:ls}.
\end{rem}


\end{document}